\date{}
\newcommand{\n}{\noindent}
\newcommand{\rrr}{\longrightarrow}
\newcommand{\s}{{\rm Spec}}
\newcommand{\m}{{\sf M}} 
\newcommand{\p}{{\mathbf P}}
\newcommand{\oo}{{\mathcal O}}
\newcommand{\bb}{\bigskip}
\newcommand{\e}{{\sf E}}
\newcommand{\Et}{{\operatorname{\acute{e}t}}}
\newcommand{\scd}{sch\'ematiquement dominant}
\newcommand{\rt}{\, \widetilde{\longrightarrow}\, }
\newcommand{\wt}{\widetilde}
\newcommand{\sm}{\operatorname{\mathsf{Sm}}}
\newcommand{\ppf}{\operatorname{\mathsf{Pl{.}pf}}}
\newcommand{\et}{\operatorname{\mathsf{Et{.}sep}}}
\newcommand{\Spec}{\operatorname{Spec}}
\newcommand{\IM}{\operatorname{Im}}
\newcommand{\af}{^{{\rm aff}}}
\newcommand{\by}{\xrightarrow}
\newcommand{\iso}{\by{\sim}}
\newcommand{\iv}{^{-1}}
\newcommand{\Hom}{\operatorname{Hom}}
\newcommand{\qcqs}{quasi-compact et quasi-séparé \,}
\newcommand{\of}{\mathsf{Of}}
\newcommand{\cf}{\mathsf{Cf}}
\renewcommand{\epsilon}{\varepsilon}
\renewcommand{\phi}{\varphi}
\newcommand{\sfC}{\mathsf{C}}
\newcommand{\sfD}{\mathsf{D}}
\newcommand{\sfR}{\mathsf{R}}
\newcommand{\PI}{\pi^s}
\newcounter{spec}
\newenvironment{thlist}{\begin{list}{\rm{(\roman{spec})}}%
{\usecounter{spec}\labelwidth=20pt\itemindent=0pt\labelsep=10pt}}%
{\end{list}}%
\numberwithin{equation}{subsection}
\newtheorem{lemme}{Lemme}[subsection]
\newtheorem{thm}[lemme]{Théorème}
\newtheorem{prop}[lemme]{Proposition}
\newtheorem{cor}[lemme]{Corollaire}
\theoremstyle{definition}
\newtheorem{defn}[lemme]{Définition}
\theoremstyle{remark}
\newtheorem{rque}[lemme]{Remarque}
\newtheorem{cons}[lemme]{Construction}
\newtheorem{rques}[lemme]{Remarques}
\newtheorem{ex}[lemme]{Exemple}
\newtheorem{exs}[lemme]{Exemples}
\newtheorem{para}[lemme]{}
\title{Un adjoint}
\begin{document}
\author{Daniel Ferrand}
\date{}
\maketitle
\begin{abstract}
Soit $S$ un schéma, disons  irréductible. On montre l'existence d'un foncteur qui associe à tout morphisme plat et de présentation finie  $T\to S$, un $S$-schéma $\PI(T/S)$ étale quasi-compact \emph{et séparé} (en bref, un objet de $\et_{S}$), et un $S$-morphisme $h_{T}: T \to \PI(T/S)$ qui sont \og universels \fg \, pour les $S$-morphismes de $T$ vers un objet de $\et_{S}$. L'étude du foncteur $T \mapsto \PI(T/S)$ utilise les relations d'équivalence dans $T$, à graphe ouvert et fermé dans $T\times_{S}T$.
On montre, en particulier, que lorsque $S$ est normal intègre et que $T$ est lisse sur $S$, alors la formation de $\PI(T/S)$ commute à la restriction aux ouverts de $S$.
Enfin, {\sc Laumon} et {\sc Moret-Bailly}, dans le cadre élargi des $S$-espaces algébriques lisses,  ont introduit l'adjoint à gauche $\pi_{0}(T/S)$, à valeur dans les espaces algébriques étales. On montre que, lorsque $S$ est normal,  le morphisme d'espaces algébriques   $\pi_{0}(T/S) \to \pi^s(T/S)$ fait de  $\pi^s $ l'enveloppe séparée de  $\pi_{0}$. 
\begin{center}
{\bf Abstract}
\end{center}
Let $ S $ be a scheme, say irreducible. We prove the existence of a functor which associates with any flat morphism of finite presentation $ T \to S $, a $S$ -scheme $ \PI (T / S) $ étale quasi-compact  \emph {and separated } (in short, an object of $ \et_ {S} $) and a $S$-morphism $ h_ {T}: T \to \PI (T / S) $ which are \og universal \fg \, for the $S$-morphisms from $T$ to an object of $\et_ {S}$. The study of the functor  $T \mapsto \PI (T/S) $ rests equivalence relations in $T$, whose graph is open and closed  in $ T \times_ {S} T $. We show, in particular, that when $S$ is normal and $T$ is smooth over $S$, then the formation of $ \PI(T/S) $ commutes with the restriction  to  open sets in $S$. Finally, {\sc Laumon} and {\sc Moret-Bailly}, in the extended context of $S$-smooth algebraic spaces, introduced the left adjoint  $\pi_ {0}(T/S)$, with value  étale algebraic spaces. When $ S $ is normal, we show that the morphism of algebraic spaces $\pi_ {0}(T/S) \to \pi^s(T/S)$ makes $ \pi^s $ the separated envelope of $ \pi_ {0} $.
\end{abstract}

\tableofcontents

\section*{Introduction}

Soit $S$ un schéma  dont l'ensemble des composantes irréductibles est fini (c'est le cas si $S$ est intègre ou noethérien); tous les morphismes $T\to S$ évoqués dans cette introduction seront implicitement supposés  plats et de présentation finie; leur catégorie sera désignée par $\ppf_S$.

 Nous montrons d'abord que le foncteur d'inclusion de la catégorie des $S$-schémas étales et séparés, dans $\ppf_S$ admet un adjoint à gauche; autrement dit, pour tout  objet $f : T \rrr S$  de $\ppf_S$,  il existe un morphisme de $S$-schémas $h_{T}: T \to \pi^s(T)$  qui est universel pour les $S$-morphismes de $T$ vers des schémas étales  et séparés sur $S$.\\ 

 L'existence de cet adjoint était connue dans au moins deux contextes:
1) Lorsque $S$ est le spectre d'un corps $k$ et que $T\to S$ est de type fini:  l'adjoint est  alors le spectre de la fermeture algébrique séparable de $k$ dans $\Gamma(T, \oo_{T})$.

2) Pour un morphisme propre et lisse la factorisation de Stein représente l'adjoint:  $\s_{S}(f_{\star}(\oo_{T}))$ est étale fini sur $S$ et le morphisme $h: T \to \s_{S}(f_{\star}(\oo_{T}))$ est universel pour les morphismes de $T$ vers des étales, séparés ou non, sur $S$.\\

Enfin, il y a quelques temps,  {\sc Bruno Kahn}  construisit  l'adjoint à gauche pour un morphisme lisse sur une base de Dedekind.\\

Le morphisme universel $h_{T}: T \to \pi^s(T)$ est, par définition, l'objet initial de la catégorie des {\it factorisations} de $f: T \to ~S$, ce terme étant, dans ce texte, réservé aux  suites de morphismes $T\by h E \by g S$, tels que $f = gh$, où $g$  est étale et séparé, et où $h$ est surjectif. Sous les hypothèses faites au début sur $S$ et sur $T\to S$, cette catégorie des factorisations, notée $\e(T/S)$, est équivalente à un ensemble ordonné {\it fini} et filtrant à gauche; cela rend évidente l'existence de l'adjoint, mais ne donne guère de prise sur ses propriétés. 

Il s'avère que la catégorie des factorisations $\e(T/S)$ est plus maniable que son objet initial; elle est définie sans hypothèse sur $S$, mais, surtout,   sa fonctorialité est évidente: un morphisme  $u: T' \to T$   dans $\ppf_S$ induit, en effet, un foncteur $\e(u): \e(T/S) \to \e(T'/S)$, dont voici la définition;
pour une factorisation $T\by h E \by g S$ de $f$, le morphisme $hu$ est ouvert et s'écrit donc $u'h'$, avec $h'$ surjectif et $u'$ une immersion ouverte
$$
\xymatrix{T' \ar[r]^u \ar[d]_{h'} & T\, \ar[d]^h\\
E' \ar[r]_{u'} & E\, .}
$$
L'image par $\e(u)$ de la factorisation $(g, h)$ est la factorisation $(gu', h')$. De plus, on montre que le foncteur $\e(u)$ est une équivalence si et seulement si le morphisme $\pi^s(u): \pi^s(T') \to \pi^s(T)$ est un isomorphisme.\\  

Pour déterminer si un morphisme $u$ induit  une équivalence $\e(u)$, on utilise le point de vue des relations d'équivalence. En effet, pour une factorisation  $T\by h E \by g S$ de $f: T\to S$, au sens restreint donné plus haut, le morphisme $h$ est fidèlement plat de présentation finie, donc la suite 
$$
T\times_{E} T\rightrightarrows T\by{h} E
$$
est exacte dans la catégorie des schémas; le morphisme $h: T \to E$ apparaît ainsi comme le quotient  de $T$ par la relation d'équivalence $R=T\times_{E}T \rightrightarrows T$; ce schéma $R$ est ouvert et fermé dans $T\times_{S}T$. Réciproquement, pour une relation d'équivalence $R$, à graphe ouvert et fermé, on démontre (de  trois fa\c cons !) que le faisceau quotient fppf $T/R$ est représentable par un schéma étale, quasi-compact et séparé sur $S$. 

Cela conduit à considérer l'ensemble, ordonné par inclusion, $\of(T\times_{S}T)$, des sous-schémas ouverts et fermés du produit $T\times_{S}T$; les relations d'équivalence envisagées en forment un sous-ensemble ordonné. Un $S$-morphisme $u: T' \to T$ induit par image inverse une application
$$
(u\times u)^* : \of(T\times_{S}T) \to \of(T'\times_{S}T').
$$
On montre l'utile critère suivant: si $u$ est schématiquement dominant et que l'application ci-dessus $(u\times u)^*$ soit bijective, alors  le foncteur $\e(u)$ définit une {\it équivalence} $\e(T/S) \; \by{\sim}\;  \e(T'/S)$ entre les catégories de factorisation, et  le morphisme $\pi^s(u): \pi^s(T'/S) \to \pi^s(T/S)$ est un isomorphisme. \\

Ainsi, un morphisme schématiquement dominant $u: T'\to T$ dans $\ppf_S$ induit un isomorphisme  $\pi^s(u): \pi^s(T'/S) \by{\sim} \pi^s(T/S)$, par exemple, dans les cas suivants:
\begin{itemize}
\item $u$ est universellement submersif (par exemple fpqc) et ses fibres sont géométriquement connexes; en particulier, pour $T' = {\bf A}^n_{T}$, ou ${\bf P}^n_{T}$, etc.
\item $u$ est un homéomorphisme universel;
\item l'application $\oo_{T} \to u_{\star}(\oo_{T'})$ est bijective.
\end{itemize}

De plus, lorsque $S$ est normal intègre et que $T$ est lisse sur $S$, le critère ci-dessus conduit à des propriétés de prolongement: 

Si $u: T'\to T$ est une immersion ouverte dense,  alors $\e(u)$ est une équivalence. 

Et aussi: pour tout ouvert non vide $U \subset S$, le morphisme canonique 
$$
\pi^s(U\times_{S}T / U) \to U \times_{S}\pi^s(T/S)
$$
est un isomorphisme; en passant à la limite sur les ouverts $U$, on montre que, en notant $\xi$ le point générique de $S$,   le morphisme canonique
$$
 \pi^s(T_{\xi}/ \xi) \; \rt \; \pi^s(T/S)_{\xi}
$$
est un isomorphisme. 

Cela permet d'étendre à une base normale $S$, les propriétés  du foncteur adjoint qui sont vraies sur un corps de base; en particulier, le foncteur $\pi^s$ commute  aux produits de $S$-schémas lisses.\\

On peut en dire un peu plus sur les changements de base. Soit $\phi : \wt{S} \to S$ un morphisme entre des schémas dont les composantes irréductibles sont en nombre fini, et soit $T\to S$ un morphisme plat et de présentation finie. D'après la propriété universelle de $\pi^s(\wt{S}\times_{S}T/\wt{S})$, on a  un morphisme de $\wt{S}$-schémas
$$
\pi^s(\wt{S}\times_{S}T/\wt{S}) \to \wt{S} \times_{S}\pi^s(T/S).
$$
Ce n'est en général pas un isomorphisme, même si $\phi$ est une immersion ouverte. Cependant, on montre que c'est un isomorphisme si l'application induite par $\phi^*$, $\of(S) \to \of(\wt{S})$ est universellement bijective. C'est le cas si $\phi$ est universellement submersif et si ses fibres sont géométriquement connexes, ou bien si $\phi$ est quasi-compact et quasi-séparé, et que l'application $\oo_{S} \to \phi_{\star}(\oo_{\wt{S}})$ est bijective.

C'est aussi un isomorphisme si $\phi : \wt{S} \to S$ est un morphisme dominant de schémas normaux intègres et que $T$ soit lisse sur $S$, car on peut, ici encore, se ramener à l'extension de corps générique.\\

Enfin, toujours en supposant que $S$ est normal intègre, on montre que pour un morphisme étale de présentation finie $T\to S$, le morphisme universel $h_{T}: T \to \pi^s(T/S)$ est un isomorphisme local (local sur $T$), et qu'il fait de $\pi^s(T/S)$ l'enveloppe séparée de $T/S$. 

Considérons alors l'espace algébrique étale $\pi_{0}(T/S)$ qui représente les composantes connexes des fibres géométriques de $T\to S$ (\cite[6.8]{LMB00}); le morphisme $T \to \pi_{0}(T/S)$ est universel pour les $S$-morphismes  $T\to E$, avec $E$ étale, séparé ou non, sur $S$; il existe donc un morphisme canonique d'espaces algébriques $\theta: \pi_{0}(T/S)  \to \pi^s(T/S)$. On montre que si $T \to S$ est lisse de présentation finie, alors $\theta$ fait de $\pi^s(T/S)$ l'enveloppe séparée de $\pi_{0}(T/S)$ (dans la catégorie des $S$-espaces algébriques); il est  vraisemblable que $\theta$ soit aussi un isomorphisme local comme dans le cas schématique évoqué plus haut.\\

Cet article doit beaucoup à {\sc Bruno Kahn} : comme signalé plus haut, il avait construit cet adjoint lorsque la base est de Dedekind et que le morphisme est lisse; et il m'avait demandé si on pouvait étendre sa construction sur une base générale. Il est apparu que le procédé qu'il utilisait est trop dépendant de la dimension 1 pour pouvoir être généralisé, et qu'il fallait donc trouver autre chose.


\section{Préliminaires}

\subsection{Définitions}\label{l0.1}
\begin{par}
Les définitions et les notations adoptées sont celles des E.G.A., et de la nouvelle édition pour EGA I.\
 Voici le rappel de ce qui sera le plus souvent utilisé.
 
\end{par}

\begin{para} \label{p.finie}Conformément aux conventions de EGA I, un morphisme $f: Y \to X$ est \emph{de présentation finie} s'il est localement de présentation finie, quasi-compact et quasi-séparé; ce dernier terme signifie que le morphisme diagonal $\Delta_{f}: Y \to Y\times_{X}Y$ est quasi-compact \cite[6.1]{EGAI}.\

La propriété pour un morphisme d'être {\it quasi-scompact et quasi-séparé} assure la quasi-cohérence des images directes, et elle est nécessaire dès que des adhérences schématiques interviennent, ce qui est fréquent dans la suite (\ref{im.sch}).
\end{para}

\begin{para}
Parmi les définitions possibles de \emph{morphisme étale}, nous utiliserons surtout celle-ci : un morphisme de schémas $X \to S$ est {\it étale} (resp. {\it étale et séparé}) s'il est plat, localement de présentation finie et si le morphisme diagonal $X \to X\times_{S}X$ est une immersion ouverte (resp. une immersion ouverte et fermée). 
 L'équivalence entre cette définition et les autres est exposée dans  \cite[17.4.2 et 17.6.2]{EGAIV4}.
 
 Un morphisme étale et séparé est de présentation finie si et seulement si il est quasi-compact.
 \end{para}

\begin{para}
({\it Exemple de morphisme étale non séparé: dédoubler un point.}) \phantomsection\label{pr1.3}  Soient $f: T \to S$ un morphisme, et $s \in S$ un point fermé tel que la fibre $T_{s} = f^{-1}(s)$ ne soit pas connexe; elle est donc  la réunion disjointe d'au moins deux sous-schémas non vides:   $T_{s} = D_{1} \sqcup D_{2}$; ils sont fermés dans $T_{s}$, donc fermés aussi dans $T$. Les ouverts complémentaires $V_{i} = T - D_{i}$ ont les propriétés suivantes:
 \begin{itemize}
 \item $V_{1}\cup V_{2} = T, \; V_{1}\cap V_{2} = T - T_{s}$.
 \item Les morphismes $V_{i}\to S$ induits par $f$ sont surjectifs, et on a $f(V_{1}\cap V_{2}) = S - s$.
 \end{itemize}
 
 Introduisons le schéma $F$ obtenu, à partir de deux copies de $S$, par leur recollement le long de $S-s$ (parfois nommé: \emph{le schéma $S$ avec le point $s$ dédoublé}).\
 
 Le morphisme $f$ se factorise en $T \to F \to S$, et le morphisme $F\to S$ est étale; il est séparé si et seulement si $s$ est ouvert dans $S$.
  \end{para}
  
.\begin{lemme}[{\rm Changement de base et image directe} \protect{\cite[9.3.3]{EGAI}}]\label{pr1.1} Soient $f: Y \to X$ et $g: X'\to X$ des morphismes; on note $f':Y'=X'\times_{X}Y \to X'$ le morphisme obtenu par changement de base. 
$$
\xymatrix{Y  \ar[d]_{f} & Y'=Y\times_{X}X'  \ar[l]_-{g'}\ar[d]^{f'}\\
X& X' \ar[l]^{g}}
$$
On suppose que $f$ est quasi-compact et quasi-séparé, et que $g$ est plat. Alors, on dispose d'un isomorphisme canonique
$$
w: g^\star(f_{\star}(\oo_{Y})) \to f'_{\star}(g'^\star(\oo_{Y})) = f'_{\star}(\oo_{Y'}) .
$$
\label{s.schdom} En particulier, si $f$ est \scd, {\it i.e.} si l'homomorphisme $\oo_{X} \to f_{\star}(\oo_{Y})$ est injectif \emph{\cite[5.4.1]{EGAI}}, alors $f'$ est \scd.
\end{lemme}

 \begin{para} \label{im.sch} Soit $f : Y \to X$ un morphisme quasi-compact et quasi-séparé, de sorte que $f_{\star}(\oo_{Y})$ est une $\oo_{X}$-algèbre quasi-cohérente. {\it L'image schématique} de $f$ est le sous-schéma fermé  $j: X' \to X$ de $X$ défini par l'idéal 
 $$
 \mathcal{J} = {\rm Ker}(\oo_{X} \to f_{\star}(\oo_{Y}))
 $$
 L'espace sous-jacent à $X'$ est égal à l'adhérence $\overline{f(Y)}$, et $f$ se factorise en
 $$
 Y \by g X' \by j X
 $$
 où $g$ est schématiquement dominant. \cite[6.10.5]{EGAI}. Cette construction commute à tout changement de base plat (\ref{s.schdom}.)\
\end{para}
\begin{para} 
 \label{adh.sch} Lorsque le morphisme $f : Y \to X$ est un sous-schéma quasi-compact, on parle {\it d'adhérence schématique} \cite[6.10.6]{EGAI}. 
 Dans ce cas, le morphisme $Y \to X'$ du sous-schéma $Y$ dans son adhérence schématique est une immersion ouverte \scd e \cite[5.4.4]{EGAI}.
 Ici encore, cette construction commute à tout changement de base plat (\ref{s.schdom}).
\end{para}
  
 \begin{para}\phantomsection\label{pr1.2}
Un morphisme plat et localement de présentation finie est universellement ouvert  \cite[7.3.10]{EGAI} ou bien \cite[2.4.6]{EGAIV2}.
 \end{para}

 \subsection{ Adjonction et morphisme universel}\label{univ}
 
 Soit {\sf C} une sous-catégorie pleine d'une catégorie {\sf D}. Un adjoint à gauche de cette inclusion de catégories est un foncteur $F: {\sf D} \to {\sf C}$  muni d'un isomorphisme de bifoncteurs, pour $X$ dans {\sf C}  et $Y$  dans {\sf D},
 $$
 {\rm Hom}_{\sf{C}}(F(Y), X)\iso {\rm Hom}_{\sf{D}}(Y, X)  .
 $$
 Autrement dit, ce foncteur associe, à tout objet $Y$ 	dans {\sf D} un objet $F(Y)$ dans {\sf C} et un morphisme 
 $$
 h_{Y} : Y \to F(Y)
 $$
  vérifiant la propriété suivante: pour tout $X\in \sfC$ et tout morphisme $v: Y \to X$ dans $\sfD$,  il existe un unique morphisme $u: F(Y)\to X$ tel que $v = u\small{\circ} h_{Y}$. Par abus de langage, on écrira souvent que \emph{$h$ est universel pour les objets de $\sfC$.}  On emploiera aussi l'expression: le morphisme $h_{Y} : Y \to F(Y)$ fait de $F(Y)$ {\it l'enveloppe de} $Y$ dans {\sf C.}
  
 Rappelons trois propriétés importantes de l'adjoint:
  
 \begin{itemize}
 \item Il est défini point par point: $F(Y)$ coreprésente le foncteur $\sfC\ni X\mapsto \Hom_{\sfD}(Y,X)$. Ainsi, l'expression \og $F$ est défini en $Y$ \fg\,  a un sens.
 \item Pour tout $Y\in \sfC$, $F$ est défini en $Y$ et $h_Y$ est un isomorphisme: cela résulte de la pleine fidélité de l'inclusion $i:\sfC\to \sfD$.
 \item Il commute aux limites inductives (représentables) quelconques, en particulier aux coproduits.
 \end{itemize}

 \subsection{Trois lemmes}
\begin{lemme}\phantomsection\label{l1.2} Soit $\sf C$ une catégorie admettant des produits fibrés (par exemple la catégorie des schémas), et soit $\p$ une propriété des morphismes de $\sf C$, stable par composition et par changement de base. Soit
\[X\by{\alpha}Y\by{\beta} Z \]
une suite de morphismes. On suppose que le composé $\beta \alpha$ vérifie $\p$, ainsi que le morphisme diagonal $\Delta_{\beta} : Y\to Y\times_Z Y$. Alors $\alpha$ vérifie $\p$.
\end{lemme}

\begin{proof} Factorisons $\alpha$ en
\[X\by{\gamma} X\times_Z Y \by{p} Y\] 
où $\gamma$ est le graphe de $\alpha$ et où $p$ est la seconde projection. Alors $\gamma$ se déduit de $\Delta_{\beta}$ par le changement de base  $X\times_Z Y\by{\alpha\times_Z 1_Y} Y\times_Z Y$, et $p$ se déduit de $\beta \alpha$ par le changement de base par $\beta$ . 
\[\xymatrix{
&X\ar[r]_{\beta\alpha}&Z\\
X\ar[r]^-{\gamma}\ar[d]^\alpha& X\times_Z Y\ar[d]^{\alpha\times_Z 1_Y}\ar[r]^-p\ar[u]& Y\ar[u]_\beta\\
Y\ar[r]^-{\Delta_\beta}& Y\times_Z Y
}\]

D'où la conclusion.  \end{proof}

\begin{lemme}[Prolongement d'isomorphismes]\phantomsection\label{l1.1}  Soit $u: X \to Y$ un morphisme fidèlement plat, quasi-compact et \emph{séparé}. Soit $i : Y' \to Y$ un morphisme quasi-compact, quasi-séparé et \scd; notons $u' : X' = Y'\times_{Y}X\to Y'$ le morphisme déduit de $u$ par changement de base; on suppose que $u'$ est un isomorphisme.
Alors $u$ est un isomorphisme.
\end{lemme}

L'hypothèse sur le morphisme $i: Y'\to Y$ est vérifiée en particulier si c'est une immersion ouverte quasi-compacte qui est, de plus,  \scd e,  ou bien si $Y$ est 
réduit et que ses points maximaux $\xi_{\lambda}$ soient en nombre fini, et, enfin,  si  $i$ est le morphisme $\bigsqcup \s(\kappa(\xi_{\lambda})) \to Y$.

{\color{black} Cet énoncé étend légèrement le cas affine bien connu, qui s'énonce ainsi : soit $A \to B$ un homomorphisme fidèlement plat d'anneaux intègres. Si $B$ est contenu dans le corps des fractions de celui de $A$, alors $A = B$. (voir  \cite[I, \S3.5, Prop. 9, $b)$]{AC})}

\begin{proof}  
Par ``descente fpqc '' le long de $u$  \cite[2.7.1]{EGAIV2}, il suffit de montrer que l'une des projections $X\times_{Y}X \to X$ est un isomorphisme ; or, ces morphismes admettent une section, à savoir le morphisme diagonal $\Delta_{u} : X \to X\times_{Y}X$ ; il s'agit donc de montrer que $\Delta_{u}$ est un isomorphisme. 
 Puisque $u$ est supposé séparé, son morphisme diagonal $\Delta_{u}$ est  une immersion fermée; il suffit  donc de vérifier  que $\Delta_{u}$ est \scd. Comme le morphisme $i : Y' \to Y$ est quasi-compact, quasi-séparé et \scd\,  et que $u$ est plat, les morphismes verticaux du diagramme suivant sont, eux aussi, \scd s (\ref{s.schdom}).
$$
\xymatrix{ X \ar[r]^-{\Delta_{u}} & X\times_{Y}X \\
X' \ar[u] \ar[r]_-{\Delta_{u'}} & X'\times_{Y'}X' \ar[u]}
$$
Par hypothèse, $\Delta_{u'}$ est un isomorphisme, la commutativité du diagramme implique donc  que $\Delta_{u}$ est \scd. 
\end{proof}


\begin{lemme}\phantomsection\label{p1.1} Soient $f : T \rrr S$ un morphisme plat de pr\'esentation finie et $T \stackrel{h}{\rrr} E \stackrel{g}{\rrr} S$ une factorisation de $f$, o\`u $g$ est \'etale de pr\'esentation finie (éventuellement non séparé), et où $h$ est surjectif. Alors les conditions suivantes sont équivalentes. 
\begin{thlist}
\item Les fibres du morphisme $h$ sont g\'eom\'etriquement con\-nexes.
\item Le morphisme $h$ est universel (\ref{univ}) pour les $S$-morphismes de $T$ vers un $S$-sch\'ema \'etale  \emph{(non nécessairement séparé)} et de pr\'esentation finie, et il reste universel apr\`es tout changement de base $S' \rrr S$.
\item Pour tout point géométrique $\omega = \s(\Omega) \to S$ de $S$, le $\omega$-morphisme $h_{\omega}: T\times_{S}\omega \to E\times_{S}\omega$ est universel pour les $\omega$-morphismes de $T\times_{S}\omega$ vers un $\omega$-sch\'ema \'etale  de type fini. Dans les termes de l'énoncé suivant (\ref{p1}), cela s'écrit $\pi_{0}(T\times_{S}\omega / \omega) \simeq E\times_{S}\omega$.
\end{thlist}
\end{lemme}

\begin{proof}
 ${\rm (i)} \Rightarrow {\rm (ii)}$ Les hypoth\`eses sur $h$ \'etant stables par changement de base, il suffit  de montrer que $h$ est universel au-dessus de $S$. Soit $f = g'h'$ une factorisation avec $g' : E' \rrr S$ étale de présentation finie.  Il s'agit de montrer qu'il existe un morphisme  $v : E \to E'$ tel que $h' = vh$  et $g = g'v$.  Isolons, pour cela, une construction qui resservira.
 
 \begin{cons}\phantomsection\label{c.1} Soit
 $$
 \xymatrix{T \ar[r]^{h} \ar[d]_{h'} & E \ar[d]^{g} \ar@{-->}[dl]\\
 E'\ar[r]_{g'} &S}
 $$
 un carré commutatif de morphismes de schémas, où $h$ est universellement ouvert et où $g'$ est étale. On cherche à construire un morphisme $E\to E'$ rendant les deux triangles commutatifs.
 \end{cons}
 Notons $h'' : T \to E'\times_{S}E$ le morphisme déduit de $h'$ et de $h$, de sorte que le composé $T\by{h''} E'\times_{S} E\by{g'\times_S 1_E} E$ est égal à $h$.  Comme $h$ est universellement ouvert, ainsi  que le morphisme diagonal $\Delta_{g'}$,  le lemme \ref{l1.2} montre que $h''$ est lui aussi universellement ouvert; son image $U=h''(T) \subset E'\times_{S}E$ est donc un ouvert; on garde la lettre  $h''$ pour désigner le morphisme surjectif $T \to U$ déduit de $h''$ ; ce schéma $U$ s'insère dans le diagramme commutatif ci-dessous,  où $u'$ est le morphisme composé $U \subset E'\times_{S}E \stackrel{{\rm pr}_{1}}{\rrr} E'$ .
 $$
 \xymatrix{T\ar[drr]^h \ar[dr]|{h''} \ar[ddr]_{h'} &&\\
 & U \ar[r]_{u} \ar[d]^{u'} & E \ar[d]^g\\
 &E' \ar[r]_{g'} & S .} 
 $$
 Si on a  pu vérifier que $u$ est un isomorphisme, alors le morphisme cherché sera le composé $u'\small{\circ} u^{-1} : E \to U \to E'$.
 \\
 
 Revenons à la démonstration du lemme.\
 
  Comme le morphisme $g'$ est supposé étale, le morphisme $u$ est étale, et il est surjectif puisque $h = uh''$ l'est; il suffit donc, pour pouvoir conclure, de montrer que pour tout point $x$\footnote{Ici, et dans la suite, la lettre \og $x$\fg désignera souvent le schéma $\s(\kappa(x))$; le symbole $u^{-1}(x)$ désigne alors la fibre schématique
  $$
 u^{-1}(x) := U\times_{E}\s(\kappa(x)).$$} de $E$, le morphisme $u^{-1}(x) \to x$  est un isomorphisme \cite[17.9.1]{EGAIV4}. Or le morphisme composé 
$$
h^{-1}(x)= h''^{-1}u^{-1}(x)  \rrr u^{-1}(x) \rrr x
$$
 est géométriquement connexe, et $h''$ est surjectif ; donc le morphisme $u^{-1}(x)\allowbreak \to x$ est géométriquement connexe et étale ; c'est un isomorphisme.

(ii) $\Rightarrow$ (iii) est trivial. 
 
 (iii)$\Rightarrow$ (i) : 
On suppose maintenant que dans la factorisation  $f=gh: T \to E \to S$ le morphisme $h$ est universel pour les $S$ schémas étales, et le reste par changement de base aux points géométriques de $S$. Il s'agit de montrer que les fibres de $h$ sont géométriquement connexes, c'est-à-dire (\cite[4.5.2]{EGAIV2}), que pour tout point géométrique $\omega = \s(\Omega) \to E$ ($\Omega$ un corps algébriquement clos), la fibre $T_{\omega}$ est connexe.
Considérons le changement de base par le morphisme composé  $\omega \to E \to S$, noté $\epsilon$.
$$
\xymatrix{T \ar[r]^h & E \ar[r]^g &S\\
T\times_{S}\omega \ar[u] \ar[r]_{h_{\epsilon}} &E\times_{S}\omega \ar[u] \ar[r]_{g_{\epsilon}} & \omega \ar[u]_{\epsilon}}
$$
Le $\omega$-schéma étale $E\times_{S}\omega$ est une somme  finie $\bigsqcup_{i}\omega_{i}$ de copies de $\omega$, l'une d'elles, $\omega_{0}$, étant donnée par le morphisme diagonal $\omega \to E\times_{S}\omega$; le schéma $T\times_{S}\omega$ se décompose donc  en la somme des fibres $ T_{i} =  h_{\epsilon}^{-1}(\omega_{i})$ du morphisme $h_{\epsilon}$, lequel s'écrit  $h_{\epsilon} = \sqcup h_{i}$; il faut voir que la fibre $T_{0}$ est connexe. En fait, c'est vrai pour chaque $T_{i}$; en effet,  la décomposition $h_{\epsilon} = \sqcup h_{i}$  montre que chaque morphisme $h_{i}: T_{i} \to \omega_{i} $ hérite de la propriété universelle de $h_{\epsilon}$, propriété  qui signifie ici que la source est connexe puisque le corps $\omega_{i}$ est algébriquement clos.
\end{proof}

\section{Cas connus}

\subsection{Sur un corps}

\begin{prop}\phantomsection\label{p1} Soient $S = \s(k)$ le spectre d'un corps et $f : T \rrr S$ un morphisme {\color{black} localement}  de type fini. Alors,
\begin{thlist}
\item  Il existe un $S$-sch\'ema \'etale, not\'e $\pi_{0}(T/S)$, et une factorisation de $f$ en
$$
T \; \stackrel{h}{\rrr} \; \pi_{0}(T/S) \; \rrr \; S ,
$$
où le morphisme $h$ est universel pour les $S$-morphismes de $T$ vers un $S$-sch\'ema \'etale.

Lorsque $T = \s(A)$ est affine et de type fini,  $\pi_{0}(T/S)$ est le spectre de la clôture algébrique séparable de $k$ dans $A$; c'est une $k$-algèbre étale finie. 

\item Si $S' \to S$ d\'esigne le morphisme de sch\'emas associ\'e \`a une extension du corps de base, le morphisme canonique
$$
  \pi_{0}(S'\times_{S}T / S')\; \rrr \; S'\times_{S}\pi_{0}(T/S).
$$
est un isomorphisme.

\item Les fibres de $h$ sont g\'eom\'etriquement connexes.

\item Si $T'\to S$ est un second $S$-sch\'ema localement de type fini, le morphisme canonique
$$
\pi_{0}(T\times_{S}T'/S) \;  \rrr \;  \pi_{0}(T/S)\times_{S}\pi_{0}(T'/S).
$$
est un isomorphisme.
 \end{thlist}
\end{prop}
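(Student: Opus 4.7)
Je commencerais par traiter le cas o\`u $T = \Spec(A)$ avec $A$ une $k$-alg\`ebre de type fini. Je noterais $A_{\mathrm{sep}} \subset A$ la sous-$k$-alg\`ebre form\'ee des \'el\'ements satisfaisant un polyn\^ome s\'eparable \`a coefficients dans $k$. Comme $A$ n'a qu'un nombre fini d'id\'eaux premiers minimaux $\mathfrak{p}_{i}$, l'homomorphisme $A_{\mathrm{sep}} \to \prod_{i} \mathrm{Frac}(A/\mathfrak{p}_{i})$ est injectif ; or, dans chaque extension $\mathrm{Frac}(A/\mathfrak{p}_{i})$ de type fini de $k$, la cl\^oture alg\'ebrique s\'eparable de $k$ est une extension finie s\'eparable (r\'esultat classique de th\'eorie des corps). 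J'en d\'eduirais que $A_{\mathrm{sep}}$ est une $k$-alg\`ebre finie et \'etale. Je poserais $\pi_{0}(T/S) = \Spec(A_{\mathrm{sep}})$, et $h : T \to \pi_{0}(T/S)$ serait induit par l'inclusion $A_{\mathrm{sep}} \subset A$. La propri\'et\'e universelle est alors imm\'ediate : un $k$-morphisme $T \to E$ avec $E$ \'etale sur $k$ correspond, sur chaque composante $\Spec(l) \subset E$ (avec $l/k$ finie s\'eparable), \`a un homomorphisme $l \to A$ dont l'image est form\'ee d'\'el\'ements s\'eparables alg\'ebriques, donc contenue dans $A_{\mathrm{sep}}$.

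\textbf{\'Etape 2, globalisation et (iii).} Pour $T$ localement de type fini g\'en\'eral, je recouvrirais $T$ par des ouverts affines de type fini. La construction pr\'ec\'edente \'etant universelle, les sch\'emas \'etales locaux se recolleraient canoniquement en un $\pi_{0}(T/S)$ \'etale sur $S$, muni d'un $h : T \to \pi_{0}(T/S)$ ayant la propri\'et\'e universelle voulue. Pour (iii), j'utiliserais que la d\'ecomposition $A_{\mathrm{sep}} = \prod_{j} k_{j}$ correspond \`a une d\'ecomposition $T = \bigsqcup_{j} T_{j}$, et que par construction la cl\^oture s\'eparable de $k_{j}$ dans $\Gamma(T_{j}, \oo_{T_{j}})$ se r\'eduit \`a $k_{j}$ ; le crit\`ere classique de connexit\'e g\'eom\'etrique (EGA IV, 4.5.13) entra\^{\i}nerait alors que $T_{j} \to \Spec(k_{j})$ est g\'eom\'etriquement connexe, d'o\`u (iii).

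\textbf{\'Etape 3, (ii), (iv), et obstacle principal.} Le morphisme canonique de (ii) existe par universalit\'e ; pour v\'erifier que c'est un isomorphisme, je me ram\`enerais, par extension \`a une cl\^oture alg\'ebrique de $k'$, au cas d'un corps alg\'ebriquement clos, o\`u, gr\^ace \`a (iii), chaque membre s'identifie \`a l'ensemble fini des composantes connexes g\'eom\'etriques de $T$. Pour (iv), j'utiliserais (ii) pour me ramener \`a $k = \bar k$ alg\'ebriquement clos, o\`u $\pi_{0}$ co\"{\i}ncide avec l'ensemble des composantes connexes ; il resterait alors \`a invoquer le fait classique que le produit de deux $\bar k$-sch\'emas connexes localement de type fini est encore connexe. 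L'obstacle principal se concentre dans l'\'etape 1 : la finitude, non triviale, de la cl\^oture alg\'ebrique s\'eparable de $k$ dans une $k$-alg\`ebre int\`egre de type fini ; ce point admis, le reste s'ensuit par manipulation formelle de la propri\'et\'e universelle et par descente fid\`element plate.
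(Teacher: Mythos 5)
Le texte ne démontre pas cette proposition : il renvoie au traité de Demazure et Gabriel \cite[I, \S4, n$^o$ 6]{DG70} et se borne à observer que l'implication (i) $\Rightarrow$ (iii) résulte du lemme \ref{p1.1} (une autre description explicite, proche de votre $A_{\mathrm{sep}}$, est donnée plus loin en \ref{p10}). Votre rédaction est donc une démonstration autonome, et elle est pour l'essentiel correcte : $A_{\mathrm{sep}}$ est bien une sous-$k$-algèbre réduite (pour $a, b$ séparables, $k[a,b]$ est quotient d'une algèbre étale, donc étale ; c'est aussi ce qui donne $A_{\mathrm{sep}} \cap \mathrm{nil}(A) = 0$ et l'injectivité de votre flèche vers $\prod_{i}\mathrm{Frac}(A/\mathfrak{p}_{i})$) ; la finitude se ramène comme vous le dites à celle de la clôture algébrique de $k$ dans une extension de corps de type fini ; et les réductions de (ii) et (iv) au cas algébriquement clos par descente fidèlement plate sont licites, (iii) servant alors à identifier $\Spec(\Omega\otimes_{k}A_{\mathrm{sep}})$ à l'ensemble des composantes connexes de $T_{\Omega}$.

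Le seul point réellement fragile est l'étape 2. Pour $U \subset V$ ouverts affines, le morphisme induit $\pi_{0}(U/S) \to \pi_{0}(V/S)$ n'est en général ni injectif ni une immersion ouverte --- c'est précisément le contenu de la remarque qui suit \ref{p1} dans le texte (les deux droites sécantes) --- de sorte que les $\pi_{0}(U_{i}/S)$ ne se \og recollent \fg\, pas au sens usuel. Deux réparations possibles : $(a)$ $T$, localement noethérien, a ses composantes connexes ouvertes ; on se ramène à $T$ connexe et on prend pour $\pi_{0}(T/S)$ le spectre de la sous-algèbre des éléments séparables algébriques de $\Gamma(T, \oo_{T})$, qui est un corps (pas d'idempotents non triviaux) fini sur $k$ car il s'injecte dans le $A_{\mathrm{sep}}$ d'un ouvert affine non vide ; ou bien $(b)$ observer que la catégorie des $k$-schémas étales est cocomplète (elle est équivalente à celle des ensembles galoisiens continus), si bien que la colimite des $\pi_{0}(U_{i}/S)$ sur le diagramme du recouvrement existe et représente le foncteur voulu, $T$ étant colimite de ce diagramme dans les schémas. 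Avec l'une ou l'autre de ces précisions, votre démonstration est complète.
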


Toutes les propriétés énoncées sont démontrées dans le traité de  {\sc Demazure} et {\sc Gabriel}; voir  \cite[I, \S4, n$^o$ 6, p.122-126]{DG70}.

Notons qu'il a été démontré, pour le lemme précédent (\ref{p1.1}), que la propriété (i) (universalité de $h$) implique (iii).  

L'énoncé (\ref{p10}) donne une autre description de $\pi_{0}(T/S)$.

Un schéma étale sur le spectre d'un corps est discret donc séparé; sur une base générale la séparation des $S$-schémas étales considérés sera requise. \\
 
\begin{rque} Soient $k$ un corps et $k \to A$ une algèbre de type fini intègre et normale, de corps des fractions $K$. Les éléments de $K$ qui sont algébriques et séparables sur $k$, sont en particuliers entiers sur $k$; ils sont donc dans $A$ puisque $A$ est intégralement fermé dans $K$; passant aux spectres, et en notant $\eta = \s(K)$, cela s'écrit, pour tout ouvert non vide $U$ de $T=\s(A)$,  $\pi_{0}(\eta/S) \by{\sim}\pi_{0}(U/S) \by{\sim} \pi_{0}(T/S)$. Cela sera généralisé en \ref{p11}.
 
 La normalité est essentielle pour les résultats de ce genre; il est bon de garder présent à l'esprit le schéma $T$ formé de deux droites sécantes et  l'ouvert $U \subset T$ complémentaire du point d'intersection, c'est-à-dire $U = \s(k[X]_{X}\times k[Y]_{Y}) \subset \s(k[X, Y]/(XY)) = T$ ; le morphisme $\pi_{0}(U/S) \to \pi_{0}(T/S)$ n'est pas injectif (oublions donc ce qui est écrit p.16, ligne 6, de \cite{Rom11}).
 \end{rque}
 

\subsection{Sur une base normale de dimension $1$}

\begin{prop}[{\sc Bruno Kahn}]\phantomsection\label{p2}
 Soit $S$ un schéma de Dedekind (noethérien, normal de dimension de Krull $1$).Un morphisme lisse de présentation finie  $f : T \rrr S$ se fatorise en
$$
T \stackrel{h}{\rrr} E  \stackrel{g}{\rrr} S ,
$$
o\`u $g$ est étale \emph{séparé}, et o\`u $h$ est surjectif et universel pour les $S$-morphismes vers un schéma étale et \emph{séparé} sur $S$.
\end{prop}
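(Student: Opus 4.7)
On se ramène d'abord au cas où $S = \Spec A$ est intègre (donc $A$ est un anneau de Dedekind, de corps des fractions $K$ et de point générique $\eta = \Spec K$); le cas général s'obtient en travaillant composante connexe par composante connexe. La stratégie consiste à construire $E$ à partir de sa fibre générique en prenant la fermeture intégrale de $A$ dans l'algèbre étale associée, et à exploiter à la fois la lissité de $T/S$ et l'hypothèse $\dim S = 1$ pour en déduire que $g: E \to S$ est étale.

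On applique d'abord la Proposition \ref{p1} à la fibre générique $T_\eta \to \eta$, ce qui fournit une factorisation $T_\eta \by{h_\eta} \Spec L \to \eta$, où $L$ est une $K$-algèbre étale finie et $h_\eta$ est surjectif à fibres géométriquement connexes. Soit $B \subset L$ la fermeture intégrale de $A$ dans $L$. Comme $L/K$ est un produit fini d'extensions séparables et que $A$ est de Dedekind, $B$ est un anneau de Dedekind semi-local, et $g: \Spec B \to S$ est un morphisme fini dont la fibre générique est $\Spec L$, donc étale. De plus, comme $T$ est normal (étant lisse sur la base normale $S$) et que les éléments de $L$ sont entiers sur $A$, le morphisme $h_\eta$ s'étend de manière unique en un morphisme $h: T \to \Spec B$ par normalité de $T$.

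On pose alors $E := \Spec B$. Le point crucial est de montrer que $g: E \to S$ est étale partout (et non seulement fini à fibre générique étale), la séparation découlant alors de la finitude. Cela utilise à la fois la lissité de $T/S$ et la dimension $1$ de $S$: en travaillant localement en un point fermé $\mathfrak{p}$ de $S$ et en passant à l'hensélisation stricte, on se ramène à une situation de DVR complet. Une éventuelle ramification de $B/A$ en $\mathfrak{p}$ forcerait une partie de la fibre fermée de $E$ à apparaître avec multiplicité $> 1$, ce qui est incompatible avec l'existence d'un $S$-schéma lisse (donc à fibres réduites) $T$ se factorisant par $E$ compatiblement avec $h_\eta$ et rencontrant cette composante.

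Une fois acquis le caractère étale de $g$, la surjectivité de $h$ résulte de la dominance générique combinée à la finitude étale de $g$ (l'image de $h$ est ouverte d'après \ref{pr1.2}, constructible, et contient tous les points génériques des fibres de $g$, donc coïncide avec $E$). L'universalité se déduit alors de la Proposition \ref{p1.1}: par changement de base à un point géométrique $\omega \to S$, le caractère étale de $g$ et la compatibilité de $\pi_0$ aux changements de base (\ref{p1}(ii)) identifient $h_\omega$ au morphisme universel $T_\omega \to \pi_0(T_\omega/\omega)$, dont les fibres sont géométriquement connexes. L'\emph{obstacle principal} est la démonstration du caractère étale de $g$: c'est précisément l'étape qui repose sur $\dim S = 1$ pour localiser la ramification en un nombre fini de points fermés, ce qui explique pourquoi la méthode de Kahn ne s'étend pas aux bases de dimension supérieure.
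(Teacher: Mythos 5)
Votre plan g\'en\'eral est bien celui du texte (fibre g\'en\'erique, fermeture int\'egrale, lissit\'e et dimension $1$), mais deux \'etapes pr\'ecises sont fausses telles quelles. La premi\`ere : poser $E := \s(B)$ ne convient pas ; il faut prendre pour $E$ l'image \emph{ouverte} du morphisme $T \to \s(B)$ (lequel est plat de type fini, donc ouvert). Prenez $S = \s(\mathbf{Z})$, $B = \mathbf{Z}[(1+\sqrt{5})/2]$ et $T = \s(B[1/5])$ : le morphisme $T \to S$ est \'etale, donc lisse, de pr\'esentation finie, $L = \mathbf{Q}(\sqrt{5})$, et la fermeture int\'egrale de $\mathbf{Z}$ dans $L$ est $B$, ramifi\'ee en $5$ ; le morphisme $\s(B) \to S$ n'est donc pas \'etale, et $T \to \s(B)$ n'est pas surjectif puisqu'il \'evite la fibre ramifi\'ee. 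Votre argument de ramification ne peut conclure qu'aux points de l'image de $h$ --- votre propre formule \og rencontrant cette composante \fg\, le laisse voir --- et c'est exactement pourquoi il faut d\'efinir $E$ comme cette image ouverte. Cette correction faite, l'\'etalit\'e de $g$ s'obtient d'ailleurs plus directement par descente de la lissit\'e le long du morphisme fid\`element plat $h$ (\cite[17.7.7]{EGAIV4}), sans passer par l'hens\'elisation, et la surjectivit\'e de $h$ est acquise par construction.

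La seconde erreur est plus s\'erieuse : la propri\'et\'e universelle ne peut pas se d\'eduire du lemme \ref{p1.1}. Ce lemme caract\'erise l'universalit\'e de $h$ pour les $S$-sch\'emas \'etales \emph{non n\'ecessairement s\'epar\'es}, par la connexit\'e g\'eom\'etrique des fibres de $h$, c'est-\`a-dire par l'identification $E_\omega \simeq \pi_0(T_\omega/\omega)$ en \emph{tout} point g\'eom\'etrique $\omega$ de $S$. Or cette identification est fausse pour la factorisation de l'\'enonc\'e : dans l'exemple de la conique $X(X-1)+tY^2$ (remarque 2 suivant \ref{p2}), on a $E = S$ alors que la fibre sp\'eciale de $T$ a deux composantes connexes ; les fibres de $h$ n'y sont pas g\'eom\'etriquement connexes, et $h$ n'est justement \emph{pas} universel pour les \'etales non s\'epar\'es. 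La d\'emonstration doit utiliser la s\'eparation du but : pour une factorisation $f = g'h'$ avec $g'$ \'etale s\'epar\'e, la construction \ref{c.1} fournit un morphisme $u : U \to E$ fid\`element plat, quasi-compact et s\'epar\'e, et le lemme \ref{l1.1} ram\`ene alors la question \`a la fibre g\'en\'erique, o\`u la propri\'et\'e universelle de $\pi_0(T_\eta/\eta)$ permet de conclure. C'est cette r\'eduction \`a la fibre g\'en\'erique via \ref{l1.1}, et non le crit\`ere fibre \`a fibre de \ref{p1.1}, qui fait fonctionner l'argument.
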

 
  La démonstration utilise la normalité de $T$, conséquence de la lissité de $f$; mais on montrera en  (\ref{T1}), que la conclusion reste vraie sous des hypothèses beaucoup plus générales.

\begin{proof} On peut supposer que $S$ est connexe, donc intègre ; soit $\eta$ son point  générique.  
Soit $T_{\eta} \rrr F_{0} \rrr \eta $ la factorisation fournie par la proposition \ref{p1}, de sorte que $F_{0} = \pi_{0}(T_{\eta} / \eta)$ est un schéma somme d'une famille finie de spectres de corps extensions finies séparables de $\kappa(\eta)$. Soit $F$ la fermeture intégrale de $S$ dans $F_{0}$; c'est un schéma de Dedekind fini sur $S$. Puisque $S$ est intégralement fermé dans $\eta$, la lissité de $f$ assure que $T$ est intégralement fermé dans $T_{\eta}$; le morphisme générique $T_{\eta} \rrr F_{0}$ se prolonge donc en un morphisme $ T \rrr F$ qui est plat puisque $F$ est de Dedekind ; ce morphisme est aussi de type fini puisque $T$ est de type fini sur $S$ \cite[ch. I, 6.3.6]{EGAI} ; son image est donc un ouvert $E \subset F$ \cite[2.4.6]{EGAIV2}, et $f$ se factorise en
$$
T \stackrel{h}{\rrr} E  \stackrel{g}{\rrr} S
$$
o\`u $h$ est fidèlement plat et $g$ séparé 
et quasi-fini sur $S$ (comme ouvert d'un fini 
plat).  D'après \cite[prop. 17.7.7]{EGAIV4}, $g$ est lisse, donc étale.

 Vérifions  la propriété universelle. Considérons une factorisation $f = g'h'$ où le morphisme $g'$ est supposé étale \emph{et séparé} et reprenons la construction \ref{c.1}, qui aboutit au diagramme 
 $$
 \xymatrix{T\ar[drr]^h \ar[dr]|{h''} \ar[ddr]_{h'} &&\\
 & U \ar[r]_{u} \ar[d]^{u'} & E \ar[d]^g\\
 &E' \ar[r]_{g'} & S .} 
 $$
 Il faut vérifier que $u$ est un isomorphisme.
 Le morphisme  $u$ est  fidèlement plat quasi-compact et séparé ; pour voir que c'est un isomorphisme, il suffit, d'après le lemme \ref{l1.1}, de montrer que le morphisme générique $u_{\eta} : U_{\eta} \to E_{\eta}$ est un isomorphisme ; or, par construction de $E$, le schéma $E_{\eta}= F_{0}$ est isomorphe au schéma $\pi_{0}(T_{\eta} / \eta)$ de la proposition \ref{p1}; la propriété universelle de ce dernier entraîne que $u_{\eta}$ est un isomorphisme.\end{proof}

\begin{rques} 1) Le début de la démonstration montre un peu plus que l'énoncé, à savoir: toute factorisation \og générique\fg\,  $T_{\eta} \rrr F_{0} \rrr \eta $, avec $T_{\eta} \rrr F_{0}$ surjectif et $F_{0} \rrr \eta $ étale, se prolonge en une factorisation de $T\to S$. Nous reviendrons sur cette propriété en \ref{p11}.

2) Sous les hypothèses de \ref{p2}, on ne peut pas espérer que $h$ soit universel pour les morphismes de $T$ vers un étale {\it non séparé} sur $S$. L'exemple de la conique qui se spécialise en deux droites disjointes le montre.
Plus précisemment, soit $S = \s(R)$ le spectre d'un anneau de valuation discr\`ete, d'uniformisante $t$, et tel que $2 \in R^{\times}$. 
 Posons $F(X, Y) = X(X-1)+tY^2$. Le morphisme 
 $$
 f : T = \s(R[X, Y]/(F)) \to S
 $$
est lisse; la fibre spéciale ($t=0$) est la réunion de deux droites disjointes, et la fibre générique $T_{\eta}\to \eta$ est géométriquement intègre (c'est une courbe de genre 0). 
Considérons la factorisation de l'énoncé \ref{p2},  $f = gh$ avec $h$ surjectif et $g$ étale séparé; puisque le morphisme composé $T_{\eta} \by{h_{\eta}} E_{\eta} \by{g_{\eta}} \eta$ est géométriquement intègre et que $h_{\eta}$ est surjectif, le morphisme étale $g_{\eta}$ est un isomorphisme, donc le morphisme $g$ lui-même est un isomorphisme (\ref{l1.1}); autrement dit on a ici $E = S$. 
Mais il existe des factorisations $f = g'h'$ o\`u  $g'$ est étale non séparé puisque la fibre spéciale de $f$ n'est pas connexe \ref{pr1.3}.
\end{rques}


\section{Existence de l'adjoint}\label{s4}

\subsection{La catégorie des factorisations}\label{s4.2}\

\begin{para} \label{not} Fixons les notations.

\begin{description}
\item[$\ppf_S$] la catégorie dont les objets sont les $S$-schémas plats de présentation finie, {\color{black} et dont les morphismes sont tous les $S$-morphismes; par définition, les objets  sont aussi quasi-compacts et quasi-séparés sur $S$ \cite[6.3.7]{EGAI}, et d'après \cite[6.3.8, (v)]{EGAI}, tout $S$-morphisme entre objets de $\ppf_S$ est automatiquement de présentation finie}.
\item[$\et_S$] la sous-catégorie pleine de la précédente dont les objets sont étales, séparés et de présentation finie, c'est-à-dire étales quasi-compacts et séparés sur $S$.
\end{description}

On note $$\iota_{S} : \et_S \to \ppf_S$$ cette inclusion de catégories.

Par définition, un adjoint à gauche de $\iota_{S}$ est un foncteur
$$
\pi^s : \ppf_S \; \rrr \; \et_S
$$
muni d'un isomorphisme de bifoncteurs, pour $T$ dans $\ppf_S$, et $E$ dans $\et_S$,
$$
\Hom_{\et_S}(\pi^s(T), E) \quad \iso \quad  \Hom_{\ppf_S}(T, \iota_{S}(E)) .
$$
Autrement dit, ce foncteur  associe à tout $T$ dans $\ppf_S$ un schéma $\pi^s(T)$ dans $\et_S$ et un morphisme
$$
h_{T}: T \rrr \pi^s(T) ,
$$ 
qui sont ``universels'' pour les morphismes de $T$ vers un $S$-schéma étale quasi-compact et séparé. 
L'exposant $s$ rappelle que le foncteur $\pi^s$ aboutit dans la catégorie des étales {\it séparés}.
\end{para}

\begin{prop}\label{sur} Considérons deux morphismes de schémas $T \by h E \by g S$ tels que $g$ soit étale quasi-compact et séparé, et que le composé $gh: T \to S$ soit plat de présentation finie. Alors 
\begin{itemize}
\item (i) Le morphisme $h$ est plat de présentation finie, et donc universellement ouvert (\ref{pr1.2});
\item (ii) Si $h$ est surjectif, alors le morphisme $h$ un épimorphisme  effectif universel; cela signifie que le diagramme 
$$
\xymatrix{T\times_{E}T  \ar@<0.5ex>[r] \ar@<-0.5ex>[r] &T \ \ar[r]^h & E} 
$$
est exact dans la catégorie des schémas, et le reste après tout changement de base $F\to E$;
\item (iii) le morphisme \og universel\fg\,   $h_{T}: T \rrr \pi^s(T)$, lorsqu'il existe, est fidèlement plat de présentation finie.
\end{itemize}
\end{prop}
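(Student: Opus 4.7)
Pour (i), j'appliquerais le lemme \ref{l1.2} avec pour propriété $\mathsf{P}$ « être plat et de présentation finie », manifestement stable par composition et par changement de base. Le composé $gh$ vérifie $\mathsf{P}$ par hypothèse, tandis que le morphisme diagonal $\Delta_g : E \to E\times_S E$ est une immersion à la fois ouverte et fermée puisque $g$ est étale et séparé, donc en particulier plat et de présentation finie. Le lemme fournit alors $\mathsf{P}$ pour $h$, et l'universelle ouverture provient de \ref{pr1.2}.

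Pour (ii), le point (i) joint à la surjectivité montre que $h$ est fidèlement plat et quasi-compact. La descente fpqc \cite[2.7.1]{EGAIV2} assure alors que $h$ est le coégalisateur de ses deux projections dans la catégorie des schémas. Comme tout changement de base $F\to E$ préserve le caractère fidèlement plat quasi-compact du morphisme $T\times_E F \to F$, la même descente s'applique après changement de base, d'où l'effectivité universelle de l'épimorphisme $h$.

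Pour (iii), $\pi^s(T)$ étant étale quasi-compact et séparé sur $S$, il appartient à $\ppf_S$ et le morphisme $h_T$ s'inscrit dans la situation de (i), ce qui donne la platitude et la présentation finie. Le cœur de l'affaire est la surjectivité. L'image $V = h_T(T)$ est un ouvert de $\pi^s(T)$ d'après (i); un recouvrement affine de $S$ joint à \cite[6.1.10]{EGAI} (appliqué à $g$ quasi-séparé et $gh_T$ quasi-compact) montre que $h_T$ est quasi-compact, puis que $V$ est quasi-compact sur $S$, donc $V \in \et_S$. La corestriction $h_T' : T \to V$ est alors un $S$-morphisme vers un objet de $\et_S$, et se factorise de manière unique à travers $h_T$, soit $h_T' = \alpha \circ h_T$ avec $\alpha : \pi^s(T) \to V$. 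Composant avec l'immersion ouverte $\iota : V \hookrightarrow \pi^s(T)$, on obtient $\iota \circ \alpha \circ h_T = h_T$; l'unicité de la factorisation universelle appliquée à $h_T$ lui-même impose $\iota \circ \alpha = \mathrm{id}_{\pi^s(T)}$, ce qui, puisque $\iota$ est une immersion, donne $V = \pi^s(T)$.

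L'obstacle principal me semble se situer en (iii) : il faut d'abord justifier que l'image $V$ est encore séparée et quasi-compacte sur $S$, donc un objet de $\et_S$ auquel la propriété universelle de $h_T$ puisse s'appliquer, puis exploiter avec soin l'unicité de la factorisation universelle (en regardant $h_T$ comme sa propre factorisation par l'identité) pour forcer $V = \pi^s(T)$.
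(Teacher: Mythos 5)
Votre démonstration est correcte et suit pour l'essentiel la même voie que celle de l'article : le lemme \ref{l1.2} pour (i), la descente fidèlement plate quasi-compacte pour (ii) (l'article cite \cite[VIII, 5.3]{SGA1}, référence plus exactement adaptée à l'effectivité de l'épimorphisme que \cite[2.7.1]{EGAIV2}), et l'argument image ouverte quasi-compacte sur $S$ + propriété universelle pour (iii). Le traitement de (iii) dans l'article est plus laconique mais identique en substance au vôtre.
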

\begin{proof} $(i)$ Puisque le morphisme $g$ est étale, quasi-compact et séparé, son morphisme diagonal $\Delta_{g}$ est une immersion ouverte et fermée, donc un morphisme plat de présentation finie, tout comme $gh: T \to S$; le lemme \ref{l1.2} entraîne que $h$ est plat de présentation finie, donc universellement ouvert (\ref{pr1.2}).

$(ii)$ Par définition, un morphisme de présentation finie est quasi-compact \ref{p.finie}; si $h$ est surjectif alors, d'après $(i)$, il est en particulier fidèlement plat quasi-compact, et on peut appliquer \cite[VIII, 5.3]{SGA1}.

$(iii)$ D'après $(i)$, le morphisme universel  $h_{T}$ est ouvert; son image $E \subset \pi^s(T)$ est donc  un ouvert, qui est quasi compact sur $S$ (puisque $T$ l'est); c'est donc un objet de $\et_{S}$; la propriété universelle implique que $E=\pi^s(T)$, donc que $h_{T}$ est surjectif.
\end{proof}

\begin{defn}\label{fact} Pour un morphisme plat de présentation finie  $f : T \to S$, la {\bf catégorie des factorisations} (sous-entendu : par un étale séparé) est la catégorie $\e(T/S)$ dont les objets sont les  factorisations de $f$ en   
$$
T \stackrel{h}{\rrr} E   \stackrel{g}{\rrr}  S,
$$
o\`u  $g$ est \'etale  quasi-compact et \emph{s\'epar\'e}, et où $h$ est surjectif. 

Dans la  catégorie $\e(T/S)$ une fl\`eche de $(g, h)$ vers $(g', h')$ est un morphisme de sch\'emas
$u : E \rrr E'$ tel que $h' = uh$ et $g'u = g$.  Comme $h$ est un épimorphisme (\ref{sur}, $(ii)$), un tel morphisme $u$, s'il existe, est unique. De plus, pour deux factorisations $T\by h E \to S$  et $T\by {h'} E' \to S$,  il en existe une qui les \og coiffe\fg: son schéma $E''$ est l'ouvert image du morphisme (ouvert)  $T \to E\times_{S}E'$ déduit de $h$ et $h'$.\end{defn}

\begin{para}
L'ensemble  $\overline{\e}(T/S)$ des classes $\overline{(g, h)}$ d'isomorphismes d'objets de $\e(T/S)$ est ordonné par la relation $\overline{(g, h)} \,\leq \,\overline{(g', h')}$ s'il existe une flèche, dans $\e(T/S)$, de $(g, h)$ vers $(g', h')$; d'après ce qui précède, cet ensemble ordonné est filtrant à gauche
\end{para}

\begin{prop}\label{p3.1}
Pour $T\in \ppf_S$, les conditions suivantes sont équivalentes:
\begin{thlist}
\item L'adjoint à gauche $F$ de l'inclusion $\iota_{S}: \et_S\to \ppf_S$ est défini \mbox{en $T$.}
\item La catégorie $\e(T/S)$ admet un objet initial.
\item L'ensemble ordonné $\overline{\e}(T/S)$ admet un plus petit élément.
\end{thlist}
La dernière condition est vérifiée lorsque  $\overline{\e}(T/S)$ est fini.
\end{prop}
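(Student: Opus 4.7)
The three conditions are essentially reformulations of one another; the only real work lies in (ii)$\Rightarrow$(i), where one must promote the universal property among factorizations with surjective first map (which is what being initial in $\e(T/S)$ gives) to the adjoint property against \emph{all} $S$-morphisms from $T$ into objects of $\et_{S}$.

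The implication (i)$\Rightarrow$(ii) is almost formal: if $h_{T} : T \to F(T)$ realizes the adjoint at $T$, then by \ref{sur} (iii) $h_{T}$ is faithfully flat of finite presentation, hence surjective, so $(F(T)\to S,\, h_{T})$ is an object of $\e(T/S)$. The universal property, combined with the fact that $h_{T}$ is an epimorphism (\ref{sur} (ii)), makes it initial. The equivalence (ii)$\Leftrightarrow$(iii) is equally formal: by \ref{sur} (ii) a morphism between two objects of $\e(T/S)$ is unique when it exists, so an object is initial if and only if its isomorphism class is the minimum of $\overline{\e}(T/S)$.

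For the main step (ii)$\Rightarrow$(i), I start from an initial $(g_{0}, h_{0}) : T \to E_{0} \to S$ and verify that $h_{0}$ is universal for $S$-morphisms of $T$ into $\et_{S}$. Given $v : T \to E'$ with $E' \in \et_{S}$, I apply Construction \ref{c.1} to the square formed by $h_{0}$, $v$, $g_{0}$ and the structural morphism of $E'$; its hypothesis---that $h_{0}$ be universellement ouvert---follows from \ref{sur} (i) together with \ref{pr1.2}. The construction produces an open subscheme $U \subset E' \times_{S} E_{0}$, a surjection $h'' : T \to U$, and an \'etale morphism $u : U \to E_{0}$ through which $h_{0}$ factors. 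A short check shows $U$ itself lies in $\et_{S}$ (the projection $E' \times_{S} E_{0} \to E_{0}$ is \'etale, separated and quasi-compact as a base change of $E' \to S$, hence so is its restriction to the open $U$, and $E_{0} \to S$ is in $\et_{S}$ by assumption), so $(g_{0}u,\, h'')$ is again an object of $\e(T/S)$ and $u$ is a morphism in $\e(T/S)$ back to $(g_{0}, h_{0})$. The initiality of $(g_{0}, h_{0})$ furnishes $\phi : E_{0} \to U$ in the opposite direction, and the uniqueness of arrows in $\e(T/S)$ (\ref{sur} (ii)) forces $u\phi = \mathrm{id}_{E_{0}}$ and $\phi u = \mathrm{id}_{U}$. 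Thus $u$ is an isomorphism, $u' \circ u^{-1} : E_{0} \to E'$ is the desired morphism, and its uniqueness is one more invocation of the epimorphism property of $h_{0}$.

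For the concluding sentence, I invoke that $\overline{\e}(T/S)$ is left-filtering (as noted just before the statement): in a finite left-filtering poset, applying the filtration condition to all elements simultaneously produces a lower bound of the whole set, which is necessarily the minimum. The only genuine obstacle in the whole argument is the promotion step in (ii)$\Rightarrow$(i): initiality in $\e(T/S)$ only tests against \emph{surjective} first maps, whereas the adjoint property demands arbitrary $v$. Construction \ref{c.1} is exactly the bridge, and the technical point one must not overlook is the verification that the open image scheme $U$ again belongs to $\et_{S}$, so that it can be compared to $(g_{0}, h_{0})$ via initiality.
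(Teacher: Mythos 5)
Your proof is correct, and the formal parts ((i)$\Rightarrow$(ii), (ii)$\Leftrightarrow$(iii), and the finiteness remark) match the paper's. For the key step (ii)$\Rightarrow$(i), however, you take a different and somewhat longer route than the paper. You run Construction \ref{c.1}: form the graph map $T\to E'\times_{S}E_{0}$, take its open image $U$, show $U\in\et_{S}$, and then use initiality twice (plus the epimorphism property of the surjections) to prove that the projection $u:U\to E_{0}$ is an isomorphism, finally composing with $u'\circ u^{-1}$. The paper instead takes the open image $E''=h'(T)\subset E'$ directly: since $h'$ is universally open (\ref{l1.2}), $E''$ is an open subscheme of $E'$, hence \'etale quasi-compact and separated over $S$, and $T\to E''\to S$ is already an object of $\e(T/S)$; initiality gives a unique $E_{0}\to E''$, and composing with the open immersion $E''\hookrightarrow E'$ finishes. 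The paper's version avoids the fibre product, the verification that $U\in\et_{S}$, and the two-sided inverse argument; yours has the advantage of reusing verbatim the machinery of \ref{c.1} already set up for Lemma \ref{p1.1}. One small inaccuracy in your verification that $U\in\et_{S}$: the restriction of a quasi-compact morphism to an open subscheme of its source is not quasi-compact in general, so you should not deduce quasi-compactness of $U\to E_{0}$ by ``restricting'' the base change $E'\times_{S}E_{0}\to E_{0}$. The correct argument (the one the paper uses in \ref{sur}(iii) and \ref{l3.1}) is that $U$ is the image of the quasi-compact $S$-scheme $T$ under an open map, hence is quasi-compact over $S$; separatedness and \'etaleness of $U/S$ then follow from $U$ being open in $E'\times_{S}E_{0}$. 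This does not affect the validity of your proof, only the justification of that one intermediate claim.
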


\begin{proof} Il est clair que si l'adjoint $F$ est défini en $T$, alors $F(T)$ est initial parmi les factorisations de $T/S$. Réciproquement,  supposons que $\e(T/S)$ admette un élément initial $T \by h E$. Soit $T \by{h'} E'$ un morphisme de $S$-schémas, où $E'$ est étale, séparé et quasi-compact sur $S$, mais où on ne suppose pas que $h'$ soit surjectif. Il s'agit de voir qu'il existe un unique morphisme $E \to E'$  dans ${\sf Et.sep}_{S}$, compatible à $h$ et $h'$.

Puisque $f$ et $\Delta_{E'/S}$ sont universellement ouverts, $h'$ l'est aussi (\ref{l1.2}). Posons $E'' = h'(T) \subset E'$; c'est un $S$-schéma étale quasi-compact, qui s'insère dans la factorisation $T \to E'' \to S$, laquelle est un élément de $\e(T/S)$ puisque $T\to E''$ est surjectif. Comme $E$ est l'élément initial, on a un unique morphisme  $E\to E''$, d'où, par composition, le morphisme cherché $E \to E'$.
\end{proof}

\subsection{Existence de l'adjoint: énoncé et démonstration}

\begin{thm} \phantomsection\label{T1} Soit $S$ un schéma dont l'ensemble des composantes irréductibles est fini (par exemple $S$ intègre ou noethérien).
Pour tout morphisme de schémas $f : T \rrr S$ plat et de présentation finie, il existe un $S$-schéma $\pi^s(T/S)$ étale quasi-compact et \emph{séparé}, et un morphisme de $S$-schémas $h_{T}: T \to \pi^s(T/S)$, fidèlement plat de présentation finie, et  qui est universel pour les morphismes vers des schémas étales quasi-compacts et séparés sur $S$.
Autrement dit, le foncteur d'inclusion $\iota_{S}: \et_{S} \to \ppf_{S}$(\ref{not})  admet $T \mapsto \pi^s(T/S)$ pour adjoint à gauche. 
\end{thm}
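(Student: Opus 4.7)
The plan is to apply Proposition \ref{p3.1}: constructing the adjoint pointwise reduces to showing, for each $T \in \ppf_S$, that the ordered set $\overline{\e}(T/S)$ admits a smallest element. Definition \ref{fact} already furnishes this poset with a filtered-from-below structure (two factorizations are jointly dominated by the open image of $T$ in the product of their targets), so it suffices to prove that $\overline{\e}(T/S)$ is \emph{finite}; a cofiltered poset that is finite automatically has a minimum. Once the adjoint is defined objectwise, its functoriality follows formally (see section \ref{univ}), and the faithful flatness of $h_T$ is supplied by Proposition \ref{sur}(iii).

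The main technical step is to encode each factorization $(g,h) \in \e(T/S)$ by an open-and-closed subscheme of $T \times_S T$. Since $g : E \to S$ is étale and separated, its diagonal $\Delta_g : E \to E \times_S E$ is an open-and-closed immersion; pulling back along the map $T \times_S T \to E \times_S E$ induced by $h$ shows that $R_E := T \times_E T$ is an open-and-closed subscheme of $T \times_S T$. Conversely, by Proposition \ref{sur}(ii), $h$ is an effective universal epimorphism, so $E$ is the coequaliser of $R_E \rightrightarrows T$; the isomorphism class of $E$ is thus determined by $R_E$, yielding an injection $\overline{\e}(T/S) \inj \of(T \times_S T)$.

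It therefore suffices to show that $T \times_S T$ has only finitely many connected components, since a scheme with $n$ connected components has exactly $2^n$ clopen subschemes. Let $\xi_1, \ldots, \xi_n$ be the generic points of $S$, finite in number by hypothesis. Since $T \to S$, and hence also $T \times_S T \to S$, is flat of finite presentation, flatness and going-down imply that every generic point of $T \times_S T$ lies above some $\xi_i$; and each fibre $T_{\xi_i} \times_{\xi_i} T_{\xi_i}$ is of finite type over the field $\kappa(\xi_i)$, so has finitely many irreducible components. Summing over $i$, $T \times_S T$ has finitely many irreducible components and, a fortiori, finitely many connected components. This final structural step is the main obstacle: it uses the three standing hypotheses (flatness, finite presentation of $T \to S$, and finiteness of the set of generic points of $S$) in an essentially combined manner, and weakening any of them would allow infinitely many clopens in $T \times_S T$, and thus potentially infinitely many factorizations.
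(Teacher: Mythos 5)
Your proposal is correct and follows essentially the same route as the paper: reduce via Proposition \ref{p3.1} to the finiteness of $\overline{\e}(T/S)$, inject that set into $\of(T\times_S T)$ using the clopen immersion $T\times_E T \to T\times_S T$ and the effectivity of $h$, then bound the irreducible components of $T\times_S T$ by passing to the (finitely many) maximal points of $S$ and the generic fibres. The only cosmetic difference is the last step, where you count clopens as unions of the finitely many connected components, while the paper observes (Lemme \ref{l1.3.1}) that a clopen subscheme is determined by the maximal points it contains; both rest on the same finiteness of $\m(T\times_S T)$ from Lemme \ref{l1.3}.
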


\begin{proof}\label{proof} En vertu de la proposition \ref{p3.1}, il suffit de montrer que l'ensemble ordonné $\overline{\e}(T/S)$ est fini. Considérons un élément de $\overline{\e}(T/S)$, représenté par la factorisation $T\to E \to S$, et le carré cartésien:
\[\begin{CD}
 T \times_E T @>d_E>> T \times_S T\\
@VVV @VV{h\times h}V\\
 E@>\Delta_g>>E \times_S E.
 \end{CD}\]

 Puisque $E/S$ est étale et séparé, le morphisme diagonal $\Delta_g$ est une immersion ouverte et fermée ; donc $d_E$ est une immersion ouverte et fermée, qu'on peut identifier au sous-schéma ouvert et fermé $\IM(d_E)$ \cite[4.2.1]{EGAI}. On associe ainsi à toute classe d'isomorphisme d'objet de $\e(T/S)$ un sous-schéma ouvert et fermé de $T \times_S T$ ; l'exactitude (\ref{sur}) de la suite 
 $$
\xymatrix{T\times_{E}T  \ar@<0.5ex>[r] \ar@<-0.5ex>[r] &T \ \ar[r]^h & E} 
$$
montre que ce procédé est injectif. Il reste donc à voir que cet ensemble de sous-schémas est fini ; c'est évident si $S$ est noethérien. Si on suppose seulement que les composantes irréductibles de $S$ sont en nombre fini, on invoque l'ensemble des points maximaux, comme suit.
 \end{proof}
 \begin{para}\label{s3.4.1}

Soit ${\sf M}(X)$ l'ensemble des points génériques des composantes irréductibles d'un schéma $X$, encore nommés {\bf les points maximaux} de $X$ \cite[$0_{I}$, 2.1.1]{EGAI}. Voici les principales propriétés que nous utiliserons.

\begin{lemme}\phantomsection\label{l1.3.1} Soient $U$ et $V$ des ouverts fermés d'un schéma $X$. Si ${\sf M}(U) = {\sf M}(V)$, alors $U = V$.
\end{lemme}
\begin{proof} En effet, tout point $x \in X$ est dans l'adhérence de ${\sf M}(X) \cap \s(\oo_{X, x})$.
\end{proof}

\begin{lemme}\phantomsection\label{l1.3}  Soit $f : X \to Y$ un morphisme de schémas.
\begin{thlist} 
\item Si le morphisme $f$ est plat, il induit une application  ${\sf M}(f) : {\sf M}(X) \to {\sf M}(Y)$.
\item Si $f$ est fidèlement plat, alors  ${\sf M}(f)$ est surjectif.
\item Si $f$ est plat et de présentation finie, alors les fibres de ${\sf M}(f)$ sont finies.  En particulier, si l'ensemble des composantes irréductibles de $Y$ est fini, il en est de m\^eme pour $X$.
\end{thlist}
\end{lemme}
\begin{proof}
\n (i) Soit $\xi$ un point maximal de $X$; puisque $f$ est plat, l'homomorphisme $\s(\oo_{X, \xi}) \to \s(\oo_{Y, f(\xi)}) $ est fidèlement plat et il est, en particulier, surjectif ; l'anneau local $\oo_{Y, f(\xi)}$ a donc un seul idéal premier, i.e. $f(\xi)$ est un point maximal de $Y$ ; ainsi, $f$ induit un morphisme ${\sf M}(X) \to {\sf M}(Y)$.\ 

\n (ii) Soit $\eta$ un point maximal de $Y$ ; le morphisme 
$$
f' : X' =X\times_{Y}\s(\oo_{Y, \eta})\to~\s(\oo_{Y, \eta})
$$ 
est encore fidèlement plat ; donc le schéma $X'$ est non vide, et l'image d'un point maximal $\xi'$ de $X'$ ne peut qu'être égale à $\eta$.\

\n (iii) Supposons  $f$ plat et de présentation finie. Considérons, de nouveau, le morphisme  $f': X' \to \s(\oo_{Y, \eta})$, pour $\eta \in {\sf M}(Y)$; comme $X'_{\rm red}$ est de type fini sur le corps $\kappa(\eta)$, c'est un schéma noethérien, donc ${\sf M}(X') = \m(X'_{\rm red})$ est fini; d'autre part, le monomorphisme $X'\to X$ est plat, d'où une injection ${\sf M}(X')\to {\sf M}(X)$ dont l'image s'identifie à la fibre en ${\sf M}(f)\iv(\eta)$. \
\end{proof}
\end{para}


\subsection{Fonctorialités}

\begin{par}

Soit $S$ un schéma dont l'ensemble des composantes irréductibles est fini, de sorte que le foncteur $T \mapsto \pi^s(T /S)$ existe sur $\ppf_S$. 
Précisons quelle est l'image d'une flèche par ce foncteur: soient $f: T\to S$ et $f':T'\to S$ deux objets de $\ppf_S$; une flèche  dans cette catégorie, est un morphisme de schémas $u: T'\to T$ tel que $f' = fu$; la propriété universelle de $h_{T'}$ montre qu'il existe un unique morphisme $\pi^s(u)$ pour lequel le carré suivant est commutatif.
$$
 \xymatrix{
 T'  \ar[d]_{h_{T'}} \ar[r]^u & T \ar[d]^{h_{T}}\\
 \pi^s(T')  \ar[r]_{\pi^s(u)} & \pi^s(T)}
 $$
On explicite ici quelques propriétés générales de ces morphismes $\pi^s(u)$, en les reliant  aux foncteurs $\e(u): \e(T/S) \to \e(T'/S)$, introduits plus bas.
 Le terme \og factorisation \fg\, a, ici encore,  le sens restreint donné dans \ref{fact}.
\end{par}

\begin{prop}\label{l3.1} Soit $u: T'\to T$ un morphisme dans $\ppf_{S}$. On suppose que les composantes irréductibles de $S$ sont en nombre fini, de sorte que le foncteur $\pi^s$ existe;
\begin{enumerate}
\item Le morphisme $u$ induit, entre les catégories de factorisation, un foncteur 
$$
\e(u): \e(T/S) \to \e(T'/S) ;
$$
\item le morphisme canonique $h_{T}: T \to \pi^s(T/S)$ induit une équivalence $\e(h_{T})$ entre les catégories de factorisation, et un isomorphisme
$$
\pi^s(h_{T}) : \pi^s(T/S) \, \rt \, \pi^s(\pi^s(T/S)/S);
$$
\item le morphisme $\pi^s(u)$ est un isomorphisme si et seulement si le foncteur $\e(u)$ est une équivalence de catégories.
\end{enumerate}
\end{prop}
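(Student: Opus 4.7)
For (1), the functor $\e(u)$ is constructed on objects exactly as sketched in the Introduction. Given $(T \by{h} E \by{g} S)$ in $\e(T/S)$, the morphism $h$ is plat de pr\'esentation finie by \ref{sur}(i), hence universellement ouvert by \ref{pr1.2}; the composite $hu$ is thus universellement ouvert as well, and its image $E' := (hu)(T')$ is an ouvert quasi-compact of $E$. Writing the factorisation $hu = u' \circ h'$ with $h': T' \to E'$ surjectif and $u': E' \hookrightarrow E$ une immersion ouverte, the morphism $gu': E' \to S$ inherits from $g$ the property of being \'etale, quasi-compact et s\'epar\'e, so $(T' \by{h'} E' \by{gu'} S)$ lies in $\e(T'/S)$. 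On a morphism $w: E_1 \to E_2$ in $\e(T/S)$, the equality $w h_1 = h_2$ forces $w(E'_1) = E'_2$, and $w$ restricts to a morphism $E'_1 \to E'_2$ in $\e(T'/S)$; the required compatibilities are automatic since $u'_2$ is a monomorphism. Functoriality of $\e(u)$ follows.

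For (2), I verify that $\e(h_T)$ is an equivalence by checking the two standard axioms. For essential surjectivity, given $(T \by{h} E \by{g} S) \in \e(T/S)$, the universal property of $h_T$ yields a unique $v: \pi^s(T/S) \to E$ in $\et_S$ with $v \circ h_T = h$; since $h$ is surjectif so is $v$, producing an object $(\pi^s(T/S) \by{v} E \by{g} S)$ of $\e(\pi^s(T/S)/S)$ whose image under $\e(h_T)$ is canonically the starting factorisation. For pleine fid\'elit\'e, morphisms in any $\e$-category are essentially unique when they exist; a scheme morphism $w$ defining a morphism in $\e(T/S)$ between the images of two objects of $\e(\pi^s(T/S)/S)$ also defines a morphism in the source, because the defining identities are pre-compositions with $h_T$, and $h_T$ is un \'epimorphisme by \ref{sur}(ii). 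The second assertion, that $\pi^s(h_T)$ is an isomorphism, is an instance of the general adjunction fact recalled in \ref{univ}: for any $X \in \et_S$ the unit $h_X: X \to \pi^s(X/S)$ is an isomorphism. Applied to $X = \pi^s(T/S)$ and combined with the \'epimorphisme property of $h_T$, functoriality of $\pi^s$ identifies $\pi^s(h_T)$ with the identity up to canonical isomorphism.

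For (3), applying the contravariant functor $\e$ to the commutation $h_T \circ u = \pi^s(u) \circ h_{T'}$ in $\ppf_S$ produces the commutative square of categories
\[
\xymatrix{
\e(\pi^s(T/S)/S) \ar[r]^{\e(\pi^s(u))} \ar[d]_{\e(h_T)} & \e(\pi^s(T'/S)/S) \ar[d]^{\e(h_{T'})} \\
\e(T/S) \ar[r]_{\e(u)} & \e(T'/S)
}
\]
whose vertical arrows are equivalences by (2); therefore $\e(u)$ is an equivalence if and only if $\e(\pi^s(u))$ is. This reduces the proposition to the following claim: for a morphism $v: E' \to E$ between two objects of $\et_S$, $\e(v)$ is an equivalence iff $v$ is an isomorphism. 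The direct implication is trivial. For the converse, the categories $\e(E/S)$ and $\e(E'/S)$ have initial objects $\operatorname{id}_E$ and $\operatorname{id}_{E'}$ respectively (a consequence of (2) applied to $E, E' \in \et_S$), and an equivalence preserves initial objects; hence $\e(v)(\operatorname{id}_E) = (E' \to v(E') \hookrightarrow E \to S)$ must be isomorphic to $\operatorname{id}_{E'}$, forcing $E' \to v(E')$ to be an isomorphism --- that is, $v$ is an immersion ouverte.

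The main obstacle will be to conclude from this that $v$ is also surjective. I expect to show first that the image $v(E')$ is in fact an ouvert-ferm\'e of $E$ --- using that $v$ is \'etale et s\'epar\'e as a morphism between objects of $\et_S$ (via \ref{l1.2} applied to $E' \to E \to S$) --- and then exploit the essential surjectivity of $\e(v)$ on a factorisation of $E \to S$ chosen to separate $v(E')$ from its complement $E \setminus v(E')$, which should force the complement to be empty.
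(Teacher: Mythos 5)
Parts (1) and (2) of your proposal are correct and follow the paper's own route: the construction of $\e(u)$ via the open image of $hu$ is exactly the paper's, and your verification that $\e(h_{T})$ is an equivalence together with the identification of $\pi^s(h_{T})$ with the unit $h_{\pi^s(T)}$ (an isomorphism because $\pi^s(T)\in \et_{S}$) is what the paper compresses into ``par d\'efinition''. For (3), the commutative square and the reduction to the claim ``$\e(v)$ an equivalence for $v: E'\to E$ in $\et_{S}$ implies $v$ an isomorphism'' is also the paper's argument, and your initial-object argument showing that $v$ is an \emph{immersion ouverte} is a clean variant of (in fact slightly stronger than) the paper's step establishing that $w$ is a monomorphism.

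The remaining step --- surjectivity of $v$ --- is where you stop, and this is a genuine gap that your sketched plan cannot close: $v(E')$ is open but in general not closed in $E$, and, more seriously, the implication itself fails. Take $S=T=\s(\mathbf{Z})$ and $T'=\s(\mathbf{Z}[1/2])$ with $u$ the open immersion. Then $T\times_{S}T=\s(\mathbf{Z})$ and $T'\times_{S}T'=\s(\mathbf{Z}[1/2])$ are connected, so by \ref{p7.1} each of $\e(T/S)$ and $\e(T'/S)$ has a single object (the identity factorisation) and $\e(u)$ is an equivalence; yet $\pi^s(u)=u$ is a non-surjective open immersion. You should be aware that the paper's own proof has its soft spot at exactly the place you hesitate: it asserts that the composite $F'\by{w}F\by{h}E$ is ``un isomorphisme'', whereas the essential surjectivity of $\e(w)$ only makes it an isomorphism onto the open image $h(w(F'))\subset E$, i.e.\ an open immersion. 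Consistently with this, the proof of Th\'eor\`eme \ref{p11} later produces a situation where $\e(u)$ is an equivalence while $\pi^s(u)$ is explicitly described as ``l'immersion ouverte $\pi^s(T_{U})\rt\pi^s(T)_{U}\to\pi^s(T)$''. So what your argument (and the paper's) actually proves is: $\e(u)$ an equivalence implies $\pi^s(u)$ an open immersion; the converse implication of (3) (``$\pi^s(u)$ iso $\Rightarrow$ $\e(u)$ equivalence'') is the only unconditionally valid half, and recovering an isomorphism requires an extra hypothesis guaranteeing surjectivity of $h_{T}\circ u$ (e.g.\ $u$ surjective), which all the later applications of the criterion that genuinely need an isomorphism do supply.
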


On notera que le foncteur $\e(u)$ est défini sans hypothèse de finitude sur les points maximaux de $S$, ni  d'hypothèse de platitude sur $u$, et que le passage  $u \mapsto \e(u)$ est contravariant, alors que $u\mapsto \pi^s(u)$ est covariant. 

\begin{proof}
1). Définition de $\e(u)$: soit $T \by{h} E \by{g} S$ une factorisation de $f$, et considérons la décomposition de $f' = fu$ en:
$$
T' \by{u} T \by h E \by g S .
$$
Puisque $f'$ et $\Delta_{g}$ sont universellement ouverts, il en est de même de $hu$; le sous-schéma $E' = {\rm Im}(hu)$ est donc un ouvert de $E$, et il est quasi-compact sur $S$ puisque $T'$ l'est; on a donc le carré commutatif suivant  
$$
\xymatrix{T' \ar[r]^{u} \ar[d]_-{h'} & T \ar[d]^-{h}\\
E' \ar[r]_{u'_{E}} & E} 
$$
où $h'$ est surjectif et où $u'_{E}$ est une immersion ouverte. On obtient ainsi une factorisation de $f'$ en $T'\by{h'} E' \by{g'} S$, avec $g' = gu'_{E}$. Cette construction définit le foncteur $\e(u) : \e(T/S) \to \e(T'/S)$, que l'on écrira simplement, et par abus de notation,
$$
\e(u)(h) \, = \, h'.
$$
Dans cette formule le symbole $h$ (resp. $h'$) désigne l'objet $T \by{h} E \by{g} S$ de $\e(T/S)$ (resp. $T'\by{h'} E' \by{g'} S$ de $\e(T'/S)$).

 Le morphisme $\pi^s(u)$ s'insère dans le diagramme commutatif suivant, où $v$ est surjectif, et où $u'$ est une immersion ouverte.
 $$
 \xymatrix{
 T' \ar@/_2pc/[dd]_{\e(u)(h_{T})} \ar[d]^{h_{T'}} \ar[r]^u & T \ar[d]^{h_{T}}\\
 \pi^s(T') \ar[d]^v \ar[r]^{\pi^s(u)} & \pi^s(T) \ar@{=}[d]\\
 F \ar[r]_{u'} & \pi^s(T)}
 $$
 \bb
 
 2). Pour chaque $T$,  le morphisme $h_{T}: T \to \pi^s(T/S)$ induit, par définition, une équivalence de catégories
 $$
 \e(h_{T}): \e(\pi^s(T/S) / S)\,  \rt \, \e(T/S) .
 $$ 
 La définition de $\pi^s(h_{T})$, lue dans le diagramme 
 $$
 \xymatrix{T \ar[r]^{h_{T}} \ar[d]_{h_{T}}& \pi^s(T) \ar@{=}[d] \\
 \pi^s(T) \ar[r]_-{\pi^s(h_{T})} & \pi^s(\pi^s(T)) ,} 
 $$
implique immédiatement que $\pi^s(h_{T})$ est un isomorphisme.
\bb

3). Vérifions d'abord la commutativité du diagramme 
 $$
 \xymatrix{\e(\pi^s(T/S) / S) \ar[d]_{\e({\pi^s(u)})} \ar[r]^-{\e(h_{T})} &\e(T/S) \ar[d]^{\e(u)}\\
 \e(\pi^s(T'/S) / S)  \ar[r]_-{\e(h_{T'})} &\e(T'/S)} \leqno{(*)}
 $$ 
 Explicitons le foncteur $\e(h_{T'})\e(\pi^s(u))$: soit $h: \pi^s(T) \to E$ un objet de $\e(\pi^s(T/S) / S)$, c'est-à-dire un morphisme surjectif vers un $S$-schéma $E$ étale quasi-compact et séparé; son image par $\pi^s(u)$ est le morphisme $h'$ du carré commutatif  $$
 \xymatrix{\pi^s(T') \ar[r]^{\pi^s(u)} \ar[d]_{h'}& \pi^s(T) \ar[d]^h\\
 E' \ar[r]_{u''} & E}
 $$
 où $h'$ est surjectif, et où $u''$ est une immersion ouverte. Son image dans $\e(T'/S)$ est le morphisme composé $h'h_{T'}$.
 
 Explicitons $\e(u)\e(h_{T})$: on a $\e(h_{T})(h) = hh_{T}: T \to E$, et  $\e(u)(h h_{T})= h'h_{T'}$, puisque $h'h_{T'}$ est surjectif et que $u''$ est une immersion ouverte. Le diagramme est donc commutatif.

Dans le diagramme ($\star$) les foncteurs \og horizontaux\fg\, sont des équivalence de catégories; cela montre que si $\pi^s(u)$ est un isomorphisme alors $\e(u)$ est une équivalence; réciproquement, si $\e(u)$ est une équivalence, la commutativité montre que $\e(\pi^s(u))$ est une équivalence; on est donc ramené à vérifier le lemme suivant, appliqué à  $\pi^s(u): \pi^s(T') \to \pi^s(T)$, à la place de  \mbox{$w: F' \to F$}. \end{proof}
\begin{lemme} Soit $w: F' \to F$ un morphisme de $S$-schémas étales quasi-compacts et séparés; on suppose que le foncteur $\e(w)$ est une équivalence de catégories. Alors $w$ est un isomorphisme.
\end{lemme}
\begin{proof} Puisque le foncteur $\e(w): \e(F) \to \e(F')$ est essentiellement surjectif, l'objet $(F' = F') \in \e(F')$ est l'image d'un élément $(h: F\to E) \in \e(F)$; le composé $F' \by w F \by h E$ est donc un isomorphisme de $S$-schémas; par suite, $w$ est un monomorphisme. 

Soit maintenant $(h': F'\to E') \in \e(F')$ l'image de $(F = F) \in \e(F)$; on a donc le carré commutatif
$$
\xymatrix{F' \ar[d]_{h'} \ar[r]^w & F \ar@{=}[d]\\
E' \ar[r]_{w'} & F}
$$
On vient de voir que $w$ est un monomorphisme; $h'$ est donc aussi un monomorphisme, mais c'est un épimorphisme \ref{sur}; donc $h'$ est un isomorphisme; cela montre que les deux objets $h: F\to E$ et $F = F$ ont des images isomorphes  par $\e(w)$; comme ce foncteur est fidèle, on conclut que $h$ est un isomorphisme, donc finalement que $w$ est un isomorphisme.
\end{proof}

Le résultat suivant m'a été signalé par {\sc Bruno Kahn}.\

\begin{prop}\phantomsection\label{c3.1} Soit $f: T \to S$ un morphisme plat de présentation finie.
\begin{description}
\item [$i)$] Le foncteur
\[f^*:\et_S\to \et_{T}\]
admet un adjoint à gauche $f_!^\Et$, donné, pour $F \in  \et_{T}$, par la formule
\[f_!^\Et(F)=\pi^s(F/S)\]

\item [$ii)$] {\rm (Transitivité de $\pi^s$)} Soit $U\in \ppf_{T}$. Alors on a un isomorphisme canonique, et fonctoriel en $U$:
\[\pi^s(U/S)\simeq \pi^s(\pi^s(U/T)/S).\]
\end{description}
\end{prop}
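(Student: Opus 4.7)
\textbf{Plan for Proposition \ref{c3.1}.} Both parts are purely formal: they follow from Théorème \ref{T1} together with the calculus of adjoints. I do not anticipate any real obstacle; the only point deserving a moment's attention is a compatibility between $\Hom_S$ and $\Hom_T$ via base change, noted below.

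For (i), I would first check that the statement makes sense: if $F \in \et_T$ then the composition $F\to T \to S$ is plat de présentation finie (stable par composition) and qcqs, hence $F \in \ppf_S$; moreover Lemme \ref{l1.3} (iii) applied successively to $f$ and to $F \to T$ guarantees that $F$ has finitely many irreducible components when $S$ does, so that $\pi^s(F/S)$ is defined by Théorème \ref{T1}. The adjunction is then established by a chain of natural bijections: for $G \in \et_S$,
\[
\Hom_{\et_T}(F, f^*G) \;=\; \Hom_T(F, T\times_S G) \;=\; \Hom_{\ppf_S}(F, \iota_S G) \;=\; \Hom_{\et_S}(\pi^s(F/S), G).
\]
The first equality is the definition $f^* = T\times_S(-)$; the second uses the universal property of the fiber product $T\times_S G$ over $S$, together with the already-fixed structural morphism $F \to T$; the third is the adjunction provided by Théorème \ref{T1}. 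Setting $f_!^\Et(F) := \pi^s(F/S)$ thus defines the claimed left adjoint.

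For (ii), I would compose the adjunction of (i) with the adjunction $\pi^s(-/T) \dashv \iota_T$ given by Théorème \ref{T1} applied over $T$. For $U \in \ppf_T$ (which is automatically in $\ppf_S$ by composition), the same reasoning as in (i) gives, for any $G \in \et_S$,
\[
\Hom_{\et_S}(\pi^s(U/S), G) \;=\; \Hom_{\ppf_S}(U, \iota_S G) \;=\; \Hom_{\ppf_T}(U, \iota_T(f^*G)),
\]
where the second equality is again the fiber product identity applied to the $T$-scheme $U$. Applying now $\pi^s(-/T)\dashv \iota_T$ followed by the adjunction of (i), this Hom-set equals
\[
\Hom_{\et_T}(\pi^s(U/T), f^*G) \;=\; \Hom_{\et_S}(\pi^s(\pi^s(U/T)/S), G).
\]
Yoneda in $\et_S$ then yields the canonical isomorphism $\pi^s(U/S) \simeq \pi^s(\pi^s(U/T)/S)$, and naturality in $U$ follows from the naturality of each bijection in the chain. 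The only step that requires one to pause is the identity $\Hom_S(U, \iota_S G) = \Hom_T(U, T\times_S G)$ for $U \in \ppf_T$, but this is nothing more than the universal property of the fiber product expressed at the level of $T$-schemes; everything else is formal.
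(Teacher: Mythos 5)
Your proof is correct and follows essentially the same route as the paper: part (i) is the identical chain of natural bijections combining the adjunction of Th\'eor\`eme \ref{T1} with the universal property of the fibre product, and part (ii) composes the two adjunctions $\pi^s(-/T)\dashv\iota_T$ and $f_!^\Et\dashv f^*$, which you write out at the level of $\Hom$-sets where the paper phrases it abstractly as the commutation of left adjoints in the square $\iota_S\circ$``oubli''$\,\simeq$``oubli''$\circ\,\iota_T$. Your explicit check via Lemme \ref{l1.3} (iii) that $T$ has finitely many irreducible components, so that $\pi^s(-/T)$ is indeed defined, is a small point the paper leaves implicit.
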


\begin{proof} $i)$  Notons d'abord que le morphisme composé $F\to T \to S$ est plat et de présentation finie, de sorte que $\pi^s(F/S)$ est défini. Soit $E\in \et_S$. On a une suite d'isomorphismes
\[\Hom_{\et_S}(\pi^s(F/S),E)\simeq \Hom_{\ppf_{S}}(F,E)\simeq \Hom_{\et_{T}}(F,E\times_S T)\]
le premier par définition de $\pi^s$ et le second par la propriété universelle du produit fibré. Cela démontre $i)$, ces isomorphismes étant naturels en  $E$.

$ii)$  Le foncteur $f^*$ de changement de base $E \mapsto T\times_{S}E$ donne lieu au diagramme commutatif suivant
$$
\xymatrix{\et_S \ar[r]^{\iota_{S}} \ar[d]_{f^*} & \ppf_S \ar[d]^{f^*}\\
\et_{T} \ar[r]_{\iota_{T}} &\ppf_{T}}
$$
Passant aux adjoints à gauche, on obtient l'isomorphisme canonique
$$
{}^{\rm ad}\iota_{S} \small{\circ} {}^{\rm ad}f^* \simeq {}^{\rm ad}f^* \small{\circ} {}^{\rm ad}\iota_{T}
$$
Traduisons cette formule: les adjoints ${}^{\rm ad}\iota$ sont par définition les foncteurs $\pi^s$;  l'adjoint ${}^{\rm ad}f^*$ qui est écrit  à gauche porte sur les catégories $\ppf$, c'est donc la restriction des scalaires au sens naïf (oubli de la base); tandis que le foncteur ${}^{\rm ad}f^*$ qui figure à droite concerne les schémas étales; d'après $i)$, il est donc  isomorphe à $\pi^s(-/S)$; d'où l'isomorphisme:
$$
\pi^s(U/S)\simeq \pi^s(\pi^s(U/T)/S).
$$
\end{proof}

\subsection{Exemples}

\begin{para}\label{ex.corps} L'unicité de l'adjoint, lorsqu'il existe, montre que si $S$ est le spectre d'un corps $k$, alors, d'après \ref{p1}, on a un isomorphisme de foncteurs (en $T$)
$$
\pi^{s}(T/S) \simeq \pi_{0}(T/S) ,
$$
où, rappelons-le, $\pi_{0}(T/S)$ désigne le spectre de la clôture algébrique séparable de $k$ dans l'anneau $\Gamma(T, \oo_{T})$.

Sur un anneau de base local hensélien, on a un résultat semblable:

\begin{cor} \label{ex.hens}Soit $S$ un schéma local hensélien dont les composantes irréductibles sont en nombre fini. Soit $f : T \to S$ un morphisme fidèlement plat de présentation finie. On suppose que $T$ est connexe.  Alors le $S$-morphisme étale $g: \pi^s(T/S) \to S$ est \emph{fini et local}.
Le morphisme de faisceaux d'algèbres  $ g_{\star}(\oo_{\pi^s(T/S)}) \to f_{\star}(\oo_{T})$ permet d'identifier la source à la ``clôture entière étale'' de $\oo_{S}$ dans $f_{\star}(\oo_{T})$. \end{cor}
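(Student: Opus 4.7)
La strat�gie se d�compose en deux �tapes : d'abord �tablir que $g : \pi^{s}(T/S) \to S$ est fini et local gr�ce � la structure des sch�mas �tales s�par�s quasi-compacts sur une base locale hens�lienne ; puis identifier $g_{\star}\oo_{\pi^{s}(T/S)}$ � la plus grande sous-$\oo_{S}$-alg�bre �tale de $f_{\star}\oo_{T}$ en exploitant la propri�t� universelle de $h_{T}$ et sa fid�le platitude.

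D'apr�s le Th�or�me \ref{T1}, $g$ est �tale, quasi-compact et s�par�, et $h_{T}$ est fid�lement plat, en particulier surjectif ; comme $f = g \circ h_{T}$ est fid�lement plat (donc surjectif), $g$ l'est aussi et rencontre le point ferm� $s$ de $S$. De plus, $T$ �tant connexe et $h_{T}$ surjectif, $\pi^{s}(T/S) = h_{T}(T)$ est connexe. J'invoquerai alors le th�or�me de d�composition bien connu dans le cadre hens�lien (cf. \cite[\S 18.5]{EGAIV4}) : tout sch�ma �tale s�par� quasi-compact $E/S$ s'�crit canoniquement $E = E^{\mathrm{f}} \sqcup E'$ avec $E^{\mathrm{f}}/S$ fini et $E'_{s} = \emptyset$, et $E^{\mathrm{f}}$ est somme disjointe de spectres d'alg�bres finies �tales \emph{locales} sur $\oo_{S,s}$. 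Appliqu� � $\pi^{s}(T/S)$, connexe et rencontrant $s$, ceci le force � co\"incider avec l'un des facteurs finis �tales locaux ; donc $g$ est fini et local.

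Pour la seconde partie, je v�rifierai d'abord que le morphisme canonique $g_{\star}\oo_{\pi^{s}(T/S)} \to f_{\star}\oo_{T}$ est injectif : c'est l'image, par le foncteur exact � gauche $g_{\star}$, du morphisme $\oo_{\pi^{s}(T/S)} \to h_{T\star}\oo_{T}$, lequel est injectif puisque $h_{T}$ est fid�lement plat, donc \scd\, (\ref{s.schdom}). Appelons \emph{cl�ture enti�re �tale} de $\oo_{S}$ dans $f_{\star}\oo_{T}$ la plus grande sous-$\oo_{S}$-alg�bre quasi-coh�rente de pr�sentation finie $\mathcal{A} \subset f_{\star}\oo_{T}$ telle que $\Spec_{S}(\mathcal{A}) \to S$ soit �tale (l'existence d'un tel maximum, dans notre cadre hens�lien local, d�coulera de la d�monstration). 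Pour une telle $\mathcal{A}$, l'inclusion $\mathcal{A} \hookrightarrow f_{\star}\oo_{T}$ correspond par adjonction affine � un $S$-morphisme $T \to \Spec_{S}(\mathcal{A})$, et $\Spec_{S}(\mathcal{A})$ est automatiquement s�par� sur $S$ (car affine) donc objet de $\et_{S}$ ; la propri�t� universelle de $h_{T}$ factorise ce morphisme en $\pi^{s}(T/S) \to \Spec_{S}(\mathcal{A})$, d'o�, par $g_{\star}$, un morphisme $\mathcal{A} \to g_{\star}\oo_{\pi^{s}(T/S)}$ compatible aux inclusions dans $f_{\star}\oo_{T}$, et donc injectif. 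Ceci exhibe $g_{\star}\oo_{\pi^{s}(T/S)}$ comme la sous-alg�bre �tale maximale cherch�e.

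L'obstacle principal, me semble-t-il, sera de rendre pr�cise la d�finition de \og cl�ture enti�re �tale \fg\, dans $f_{\star}\oo_{T}$ : dans le contexte hens�lien local, l'argument ci-dessus montre que les sous-alg�bres �tales de pr�sentation finie forment une famille filtrante admettant un maximum, � savoir $g_{\star}\oo_{\pi^{s}(T/S)}$, qui est finie �tale locale sur $\oo_{S}$ ; mais une description plus concr�te (par exemple via les �l�ments racines d'un polyn�me unitaire � discriminant inversible sur $\oo_{S}$) m�riterait d'�tre compar�e � cette d�finition abstraite. Les ingr�dients restants (d�composition hens�lienne des alg�bres �tales, adjonction affine entre sch�mas et alg�bres quasi-coh�rentes, propri�t� universelle de $\pi^{s}$) sont standards ou d�j� �tablis dans le texte.
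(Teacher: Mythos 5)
Your proof of the first assertion follows essentially the same route as the paper's: the henselian decomposition of EGA IV, 18.5.11 splits $\pi^s(T/S)$ into a finite local piece over the closed point and a complement, and the connectedness of $T$ together with the surjectivity of $h_{T}$ eliminates the complement. For the second assertion, which the paper states without writing out a proof, your maximality argument (every finite-presentation �tale subalgebra of $f_{\star}(\oo_{T})$ factors through $g_{\star}(\oo_{\pi^s(T/S)})$ by the universal property of $h_{T}$, and the latter is itself such a subalgebra because $h_{T}$ is schematically dominant) is correct and usefully makes explicit what the paper leaves implicit.
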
 

\begin{proof}
 Posons $E = \pi^s(T/S)$; c'est un schéma quasi-fini et séparé sur $S$. Comme $f$ est surjectif, il existe $e \in E$ tel que le point $g(e) = s$ soit le point fermé de $S$; d'après l'implication  \cite[18.5.11, $a) \Rightarrow c)$]{EGAIV4},  le schéma $E$ est somme de deux schémas $E'$ et $E''$, tels que $E' = \s(\oo_{E, e})$ et que la restriction $g_{|E'}: E' \to S$ soit un morphisme fini. Mais $T$ est connexe et le morphisme $h : T \to E' \sqcup E''$ est surjectif; donc $E''$ est vide.
\end{proof}

\end{para}
\begin{para}
Morphismes étales.\label{ex.ét}

Soit $f: T \to S$ un morphisme étale quasi-compact. Si $f$ est séparé, le morphisme $h: T \to \pi^s(T/S)$ est un isomorphisme; mais cette évidence ne dit pas grand chose sur les schémas étales intermédiaires. Supposons, par exemple, que $f$ soit fini étale et qu'une des fibres fermées $T_{s}\to s$ ne soit pas connexe; le \og dédoublement de $s$ \fg\, rappelé en \ref{pr1.3}, permet de décomposer le morphisme $f$ en  $T\to F \to S$, où $F\to S$ est étale non séparé (si $s$ n'est pas ouvert dans $S$), mais où le morphisme $T\to F$ est étale, et séparé puisque $F$ est réunion de deux ouverts $F_{1}$ et $F_{2}$ qui sont isomorphes à $S$; on a  ici $T \simeq \pi^s(T/F) \simeq \pi^s(T/S)$ et $\pi^s(F/S) \simeq S$.
On verra plus bas (\ref{p.12}) que si $S$ est normal, alors le morphisme $h_{T}: T \to \pi^s(T/S)$ est un isomorphisme local.

\end{para}

 \begin{para}
 Morphismes propres. Le résultat qui suit m'a été signalé par  {\sc Bruno Kahn}.\label{ex.pro}\\
 
\begin{prop} 
Soient $S$ un sch\'ema noeth\'erien et $f : T \rrr S$ un morphisme propre et lisse. Dans la factorisation de Stein
$$
T\by{h} \Spec_S(f_{\star}(\oo_{T}))  \by{g} S,
$$
le morphisme $g$ est \'etale fini. Notons $A(T/S) = \Spec_S(f_{\star}(\oo_{T}))$ l'enveloppe $S$-affine de $T$. On a les propriétés suivantes : 
\begin{enumerate}[(i)]
\item Les fibres de $h$ sont g\'eom\'etriquement connexes;
\item le morphisme $h: T \to A(T/S)$ est universel pour les morphismes de $T$ vers un $S$-schéma étale de type fini, séparé ou non;
\item le morphisme canonique $a: \pi^s(T/S) \to A(T/S)$ est un isomorphisme;
\item la formation de l'enveloppe affine commute à tout changement de base $S' \to S$ : le morphisme $ A(S'\times_{S}T / S') \to S'\times_{S}A(T/S)$ est un isomorphisme;
\item comme en (iv), mais pour les morphismes où $S'$ est un point géométrique de $S$.
\end{enumerate}
\end{prop}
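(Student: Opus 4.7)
The plan is to first nail down the étale-finiteness of $g$ and the geometric connectedness of the fibres of $h$, then deduce items (ii) and (iii) formally, and finally treat the base change assertions (iv)–(v) by a separate argument that exploits the local freeness of $f_{\star}(\oo_{T})$.

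First, since $f$ is proper and $S$ is noetherian, $f_{\star}(\oo_{T})$ is a coherent $\oo_{S}$-algebra, so $g: A(T/S) \to S$ is finite. Smoothness forces the fibres of $f$ to be geometrically regular, hence geometrically reduced, and the usual cohomology-and-base-change argument (applied to the flat proper morphism $f$) shows that $f_{\star}(\oo_{T})$ is locally free and its formation commutes with arbitrary base change $S'\to S$. On a geometric fibre $T_{\bar{s}}$, smooth and proper over $\bar{s} = \s(\Omega)$, $H^0(T_{\bar s},\oo_{T_{\bar s}})$ is a finite product of copies of $\Omega$, one for each connected component; thus $g_{\bar s}$ is étale, and so $g$ is étale (being finite and flat with étale geometric fibres). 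For (i), the Stein-type factorization means exactly that $h$ has geometrically connected fibres: after base change to a geometric point $\bar s$ of $A(T/S)$, the fibre $h^{-1}(\bar s)$ is a connected component of $T_{g(\bar s)}$, hence connected.

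With (i) in hand, item (ii) is immediate from Lemma \ref{p1.1}: the hypothesis ``fibres of $h$ géométriquement connexes'' (condition (i) of that lemma) is fulfilled, so $h$ enjoys the corresponding universal property (condition (ii)) for all $S$-étale schemes of finite presentation, separated or not. For (iii), note that $g$ being étale finite is in particular separated, so $A(T/S)\in \et_{S}$; hence the universal property of $\pi^s$ yields a unique $a : \pi^s(T/S) \to A(T/S)$ with $a\small\circ h_{T}=h$. Conversely, applying (ii) to the morphism $h_{T}:T\to \pi^s(T/S)$ (whose target is étale separated of finite presentation, a fortiori étale of finite presentation) produces a unique $b:A(T/S)\to \pi^s(T/S)$ with $b\small\circ h=h_{T}$. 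The uniqueness parts of both universal properties force $b\small\circ a = \mathrm{id}$ and $a\small\circ b=\mathrm{id}$, so $a$ is an isomorphism.

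For (iv), let $\phi:S'\to S$ be arbitrary and $f':T'=S'\times_{S}T\to S'$ the base change; $f'$ is still proper and smooth. The local freeness of $f_{\star}(\oo_{T})$ obtained above, together with cohomology and base change (or directly: étale, hence flat, morphisms $g$ commute to arbitrary base change, and pulling back the Stein factorization along $\phi$ yields a factorization $T'\to S'\times_{S}A(T/S)\to S'$ whose first arrow still has geometrically connected fibres because smoothness is preserved), shows that the canonical map $A(T'/S')\to S'\times_{S}A(T/S)$ is an isomorphism. Item (v) is then just (iv) specialized to $S'=\s(\Omega)\to S$ a geometric point.

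The main obstacle is the étale-finiteness of $g$ together with the universal base-change property of $f_{\star}(\oo_{T})$: once these classical consequences of smoothness and properness are granted, the rest is a formal manipulation of the universal properties of $\pi^s$ and of Lemma \ref{p1.1}. The subtle point to be careful about is that (ii) asks universality against \emph{all} étale schemes of finite presentation, not only the separated ones, which is precisely what Lemma \ref{p1.1} delivers and what distinguishes the present statement from the general existence result \ref{T1}.
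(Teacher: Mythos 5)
Your proof is correct, and for (i) and (ii) it rests on the same classical inputs as the paper (Zariski connectedness / cohomology and base change for $f_{\star}(\oo_{T})$, then Lemma \ref{p1.1} for the implication (i) $\Rightarrow$ (ii)). You diverge from the paper in two places. For (iii), you build the inverse $b: A(T/S)\to \pi^s(T/S)$ by invoking the universal property (ii) of $h$ against the \'etale finite-presentation scheme $\pi^s(T/S)$; the paper instead shows that in the factorisation $T\to\pi^s(T/S)\to S$ the second morphism is proper (because $h'$ is surjective and $f$ is proper), hence finite, hence affine, and obtains $b$ from the universal property of the \emph{affine} envelope. Both arguments are valid, but the paper's version of (iii) deliberately uses neither (i) nor (ii) --- it exploits this at the end to show that (v) together with (iii) \emph{implies} the connectedness statement (i); with your construction of $b$ that implication would be circular, though nothing in the proposition itself requires it. For the base-change assertions you prove (iv) directly from the fact that $f_{\star}(\oo_{T})$ is locally free and commutes with arbitrary base change, and obtain (v) as a special case, whereas the paper quotes (v) from EGA III 7.8.7 and deduces (iv) from it by the observation that a morphism of finite \'etale $S'$-schemes inducing isomorphisms on geometric fibres is an isomorphism. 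Your ordering is the more self-contained one, at the price of invoking the full strength of cohomology and base change; one small point worth tightening is the surjectivity of $h$ (needed to apply Lemma \ref{p1.1}), which follows since $h$ is proper with schematically dense image.
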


Voir aussi la remarque qui suit \ref{p4.2}.

\begin{proof}
On a numéroté ces propriétés pour pouvoir facilement indiquer les dépendances logiques entre elles. Pour une d\'emonstration du fait que $g$ soit étale, voir \cite[4.3.4, 7.8.7 et 7.8.10]{EGAIII}, ou bien \cite[X 1.2]{SGA1}, ou enfin \cite[8.5.16]{FAG}.
La propriété (i) est essentiellement le théorème de connexion de Zariski (\cite[4.3.4]{EGAIII}). \\

L'implication (i)$\Rightarrow$ (ii) est démontrée en  \ref{p1.1}.\\

Montrons (iii). Le morphisme $a : \pi^s(T/S) \to A(T/S)$ est défini par la propriété universelle de $\pi^s(T/S)$ puisque le schéma étale $A(T/S)$ est fini, donc séparé; d'autre part, le morphisme propre $f$ se factorise en $T\by{h'}\pi^s(T/S)\by{g'} S$, et $g'$ est séparé de type fini; comme $h'$ est surjectif \ref{sur}, $g'$ est propre \cite[5.4.3 (ii)]{EGAII}; il est donc fini \cite[8.11.1]{EGAIV3} et en particulier affine: par la propriété universelle de l'enveloppe affine $A(T/S)$, on obtient un $S$-morphisme en sens inverse $b: A(T/S)\to \pi^s(T/S)$. Les deux propriétés universelles montrent que $a$ et $b$ sont inverses l'un de l'autre. 

On aura remarqué que cette démonstration de (iii) n'utilise ni la propriété (i), ni (ii).\\

 La propriété (v) est démontrée en \cite[7.8.7]{EGAIII}. Elle implique la propriété (iv), selon laquelle le morphisme 
 $$
 u:  A(S'\times_{S}T / S') \to S'\times_{S}A(T/S)
  $$
   est un isomorphisme; en effet,  la source et le but de $u$ sont deux $S'$-schémas finis étales, et, par hypothèse, le morphisme $u$ induit un isomorphisme entre leurs fibres.\\

 Montrons enfin que la conjonction des propriétés  (v) et (iii) implique (i) : d'après l'implication $(iii) \Rightarrow (i)$ du lemme \ref{p1.1}, on peut supposer que $S$ est le spectre d'un corps algébriquement clos, et il faut vérifier la propriété (ii) de la proposition dans ce cas; mais si $S$ est le spectre d'un corps tout $S$-schéma étale est séparé, donc le morphisme $h': T \to \pi^s(T/S)$ est universel pour tous les étales; et d'après (iii) $h'$ est isomorphe à $h: T \to A(T/S)$. 
 
 Ainsi la \og propriété d'échange\fg \, (v) implique la propriété de connexion (i).
 \end{proof}

\end{para}

\begin{para}
Enfin,  lorsque $S$ est un trait et que le morphisme $f : T \to S$ est lisse, la factorisation $T \to E \to S$ de l'énoncé \ref{p2} est isomorphe à la factorisation  $T \to \pi^s(T/S) \to S$.
\end{para}

\begin{para} \label{gen} Le foncteur $\pi^s$ ne commute en général pas à la restriction aux ouverts de $S$, ni à la restriction à la fibre générique lorsque $S$ est intègre, comme l'exemple suivant le montre: l'homomorphisme ${\bf Z} \to {\bf Z}[\sqrt{5}]$ est fini libre et ramifié en $2$; il n'est donc pas étale bien que sa fibre générique le soit.

Cependant, on montrera en \ref{p11} que des propriétés de commutation à certains changements de base sont vérifiées si la base est normale, ou géométriquement unibranche, et que le morphisme est lisse.
\end{para}

\section{L'adjoint comme quotient}\label{s5}

\subsection{Quotients étales séparés}

Dans ce paragraphe, on interprète le morphisme universel $h_{T}: T \to \pi^s(T/S)$ comme le passage  au quotient de $T$ par une relation d'équivalence convenable, et nous en tirons les premières conséquences.\

\begin{thm}\phantomsection\label{t1} Soit $f : T \rrr S$ un morphisme plat et de présentation finie de schémas. Soit $d : R \rrr T\times_{S}T$ le graphe d'une relation d'équivalence, o\`u $d$ est une immersion ouverte et fermée. Alors le faisceau fppf quotient $T/R$ est représentable par un schéma $E$ quasi-compact étale et séparé sur $S$; enfin, le morphisme $R \to T\times_{E}T$ est un isomorphisme.\end{thm}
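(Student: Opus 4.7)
The plan is to realize $E$ as the fppf quotient $T/R$ and to verify its properties by descent along the canonical morphism $h : T \to E$; the main point will then be to upgrade this quotient from an algebraic space to a scheme.

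First, I would verify that the two projections $p_i : R \to T$ are faithfully plats de pr\'esentation finie. The immersion $d : R \to T\times_{S}T$ is ouverte et ferm\'ee, in particular plate et localement de pr\'esentation finie, while $T\times_{S}T \to T$ is plat de pr\'esentation finie comme changement de base de $f$; par composition, chaque $p_{i}$ est plat et localement de pr\'esentation finie. La surjectivit\'e est gratuite: la diagonale $\Delta : T \to T\times_{S}T$ se factorise par $R$ (par r\'eflexivit\'e) et fournit donc une section de chaque $p_{i}$. La quasi-compacit\'e des $p_{i}$ est imm\'ediate puisque $T\times_{S}T \to T$ est quasi-compact (changement de base de $f$) et que $R$ est un sous-sch\'ema ferm\'e de $T\times_{S}T$. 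Ainsi $R \rightrightarrows T$ est une relation d'\'equivalence fppf.

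On invoque alors le th\'eor\`eme g\'en\'eral d'existence des quotients par une relation d'\'equivalence fppf: le faisceau $T/R$ pour la topologie fppf est repr\'esentable par un $S$-espace alg\'ebrique $E$, avec $h : T \to E$ fppf, et la construction fournit d'embl\'ee l'isomorphisme $R \iso T\times_{E}T$, ce qui r\`egle la derni\`ere assertion du th\'eor\`eme. Les propri\'et\'es de $E \to S$ se v\'erifient ensuite par descente fppf le long de $h$: la platitude et la pr\'esentation finie descendent des propri\'et\'es correspondantes de $T/S$; la quasi-compacit\'e est imm\'ediate, $h$ \'etant surjectif et $f$ quasi-compact. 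Pour la diagonale $\Delta_{E/S} : E \to E\times_{S}E$, son changement de base par le recouvrement fppf $h \times h : T\times_{S}T \to E\times_{S}E$ est pr\'ecis\'ement l'immersion $T\times_{E}T = R \hookrightarrow T\times_{S}T$, ouverte et ferm\'ee par hypoth\`ese; la propri\'et\'e d'\^etre une immersion ouverte et ferm\'ee \'etant locale fppf sur le but, $\Delta_{E/S}$ est elle-m\^eme ouverte et ferm\'ee. Cela montre que $E \to S$ est \'etale et s\'epar\'e en tant qu'espace alg\'ebrique.

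L'obstacle principal est de conclure que $E$ est un sch\'ema et non seulement un espace alg\'ebrique. Puisque $E \to S$ est \'etale---donc localement quasi-fini---et s\'epar\'e au-dessus du sch\'ema $S$, le th\'eor\`eme de repr\'esentabilit\'e des espaces alg\'ebriques s\'epar\'es et localement quasi-finis (d\^u \`a Knutson) entra\^ine que $E$ est un sch\'ema, ce qui ach\`eve la d\'emonstration.
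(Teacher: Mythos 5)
Your proof is correct and follows essentially the same route as the paper's own argument: invoke Artin's theorem (cited there as \cite[10.4]{LMB00}) to represent the fppf quotient $T/R$ by an algebraic space, read off \'etaleness and separatedness from the fact that the diagonal of $E/S$ pulls back along $h\times h$ to the open and closed immersion $R \to T\times_{S}T$, and conclude that $E$ is a scheme by the representability theorem for \'etale separated algebraic spaces (\cite[A.2]{LMB00}, equivalently Knutson's result). You merely make explicit the routine fppf-descent verifications that the paper leaves implicit; the paper additionally offers two more elementary proofs in its appendix, but its primary proof is exactly this one.
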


\begin{proof}\phantomsection\label{r2}
Un théorème de {\sc M. Artin} montre que $T/R$ est un espace algébrique \cite[10.4]{LMB00} ; puisque l'immersion $R \rrr T\times_{S}T$ est ouverte, cet espace algébrique est étale, et il est séparé puisque l'immersion est aussi fermée ; un autre théorème  \cite[A.2]{LMB00} implique qu'un espace algébrique étale et séparé est (représentable par) un schéma.
\end{proof}

Voir l'appendice pour des démonstrations plus élémentaires du théorème \ref{t1}.

\begin{thm}\phantomsection\label{p7.1} {\rm [Relations d'équivalence associées aux factorisations].} Soit $f : T \to S$ un morphisme plat de présentation finie. Considérons le foncteur \og carré fibré\fg
$$
\cf_{T/S}: \e(T/S) \; \to \;  {\sf R}(T/S) ,\quad  \{ T\rightarrow E \rightarrow S\} \; \longmapsto \; \{T\times_{E}T \rrr T\times_{S}T\} 
$$
de la catégorie des factorisations $\e(T/S)$, au sens de \ref{fact}, vers la catégorie ${\sf R}(T/S)$ des graphes de relations d'équivalence dans $T$ qui sont ouverts et fermés dans  $T\times_{S}T$; l'image d'une flèche $u : E \to E'$ de la première catégorie est l'immersion $T\times_{E}T \to T\times_{E'}T$ de la seconde.
Alors, ce foncteur est une équivalence de catégories.
\end{thm}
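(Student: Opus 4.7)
L'id\'ee est de construire un quasi-inverse explicite $\sP : {\sf R}(T/S) \to \e(T/S)$ en invoquant le th\'eor\`eme \ref{t1}. Pour un objet $d: R\hookrightarrow T\times_{S}T$ de ${\sf R}(T/S)$, le th\'eor\`eme \ref{t1} affirme que le faisceau fppf quotient $T/R$ est repr\'esentable par un $S$-sch\'ema $E_{R}$ \'etale, quasi-compact et s\'epar\'e, et que la fl\`eche canonique $R\to T\times_{E_{R}}T$ est un isomorphisme; le morphisme de passage au quotient $q_{R}: T\to E_{R}$ est fppf, donc surjectif. On pose alors $\sP(R)\, :=\, \bigl(T\by{q_{R}} E_{R}\to S\bigr)\in\e(T/S)$. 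Sur les fl\`eches: la cat\'egorie ${\sf R}(T/S)$ n'est autre, par d\'efinition, que l'ensemble (ordonn\'e par inclusion) des sous-sch\'emas ouverts et ferm\'es de $T\times_{S}T$ qui sont des graphes de relations d'\'equivalence; une fl\`eche $R\subset R'$ correspond au fait que $R'$ est plus grossi\`ere, d'o\`u, par la propri\'et\'e universelle de $q_{R}$ comme quotient fppf, un unique $S$-morphisme $E_{R}\to E_{R'}$ compatible aux $q$'s, qui est bien une fl\`eche dans $\e(T/S)$.

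Je v\'erifierais ensuite que $\cf_{T/S}$ est bien d\'efini: pour une factorisation $T\by h E\by g S$, le carr\'e
\[
\xymatrix{T\times_{E}T \ar[r] \ar[d] & T\times_{S}T \ar[d]^{h\times h} \\ E \ar[r]_-{\Delta_{g}} & E\times_{S}E}
\]
est cart\'esien; comme $g$ est \'etale et s\'epar\'e, $\Delta_{g}$ est une immersion ouverte et ferm\'ee, et donc $T\times_{E}T\hookrightarrow T\times_{S}T$ aussi. Le sous-sch\'ema ainsi obtenu est automatiquement un graphe d'une relation d'\'equivalence dans $T$ (via les morphismes structurels de projection et de composition venant de $E$). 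La fonctorialit\'e sur les fl\`eches est imm\'ediate, puisque toute fl\`eche $u: E\to E'$ dans $\e(T/S)$ est unique (Proposition \ref{sur}(ii) : $h$ est un \'epimorphisme), et son image est l'inclusion \'evidente $T\times_{E}T\subset T\times_{E'}T$ dans $T\times_{S}T$.

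Reste \`a v\'erifier les isomorphismes de quasi-inversion, naturels en $R$ et en l'objet $h:T\to E$ respectivement. D'une part, $\cf_{T/S}\bigl(\sP(R)\bigr)=T\times_{E_{R}}T$, qui s'identifie \`a $R$ par la derni\`ere assertion du th\'eor\`eme \ref{t1}. D'autre part, pour une factorisation $T\by h E\by g S$ dans $\e(T/S)$, la proposition \ref{sur}(ii) dit exactement que la suite $T\times_{E}T\rightrightarrows T\to E$ est exacte dans la cat\'egorie des sch\'emas (et le reste par tout changement de base), donc que $E$ repr\'esente le faisceau fppf $T/(T\times_{E}T)$; autrement dit $\sP\bigl(\cf_{T/S}(h)\bigr)=(T\to E_{T\times_{E}T})$ est canoniquement isomorphe \`a $(T\by h E)$. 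Ces deux isomorphismes \'etant visiblement naturels, on obtient l'\'equivalence annonc\'ee.

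Il n'y a pas ici de v\'eritable obstacle, tout le travail \'etant contenu dans le th\'eor\`eme de repr\'esentabilit\'e \ref{t1} (qui assure que $T/R$ est un sch\'ema et que $R\iso T\times_{T/R}T$) et dans la proposition \ref{sur}(ii) (qui fournit l'exactitude r\'eciproque $T/(T\times_{E}T)=E$); le seul point demandant un minimum d'attention est la bonne lecture de la structure de cat\'egorie sur ${\sf R}(T/S)$, \`a savoir que les fl\`eches y sont simplement les inclusions entre sous-sch\'emas ouverts et ferm\'es de $T\times_{S}T$.
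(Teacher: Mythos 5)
Votre preuve est correcte et suit essentiellement la m\^eme d\'emarche que celle du papier : une direction est fournie par l'exactitude de $T\times_{E}T\rightrightarrows T\to E$ (proposition \ref{sur}), l'autre par la repr\'esentabilit\'e du quotient $T/R$ et l'isomorphisme $R\iso T\times_{T/R}T$ du th\'eor\`eme \ref{t1}, le foncteur $R\mapsto (T\to T/R\to S)$ \'etant le quasi-inverse. Votre r\'edaction est simplement plus d\'etaill\'ee sur la v\'erification que $\cf_{T/S}$ est bien d\'efini et sur la naturalit\'e des isomorphismes d'adjonction.
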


\begin{proof} On a déjà remarqué (\ref{sur}) que, pour une factorisation $T \by h E \to S$, le morphisme $h$ est un épimorphisme effectif, i.e. que la suite
$$
T\times_E T\rightrightarrows T\by{h} E
$$
est exacte dans la catégorie des schémas; en particulier, le morphisme $h$ définit $E$ comme le quotient de $T$ par la relation $T\times_{E}T \rrr T\times_{S}T$. Réciproquement, soit $R$ une relation à graphe ouvert et fermé; d'après le théorème \ref{t1} le faisceau quotient $E' = T/R$ est représentable par schéma étale et séparé,  et le morphisme $R \to T\times_{E'}T$ est un isomorphisme. Le foncteur $R \mapsto (T \to T/R \to S)$ est donc quasi-inverse de celui de l'énoncé. \end{proof}

\begin{cor}\phantomsection\label{cor1} La relation d'équivalence $R$ telle que le morphisme $T \to T/R$ soit universel pour les étales quasi-compacts et séparés est l'objet initial dans la catégorie (équivalente à un ensemble ordonné) ${\sf R}(T/S)$ des relations d'équivalence à graphe ouvert et fermé dans $T\times_{S}T$.
\end{cor}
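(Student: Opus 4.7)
Le plan est de transporter directement l'\'enonc\'e \`a travers l'\'equivalence de cat\'egories fournie par le th\'eor\`eme~\ref{p7.1}. Le foncteur $\cf_{T/S}: \e(T/S) \to {\sf R}(T/S)$ \'etant une \'equivalence de cat\'egories, il pr\'eserve et refl\`ete les objets initiaux; le probl\`eme se ram\`ene donc \`a identifier l'objet initial de $\e(T/S)$, puis \`a calculer son image par $\cf_{T/S}$.

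J'invoquerais ensuite la proposition~\ref{p3.1}, qui affirme que l'adjoint $\pi^s$ est d\'efini en $T$ si et seulement si $\e(T/S)$ admet un objet initial, lequel est alors la factorisation universelle $T \by{h_{T}} \pi^s(T/S) \to S$ (c'est pr\'ecis\'ement le th\'eor\`eme~\ref{T1}, sous l'hypoth\`ese de finitude des composantes irr\'eductibles de $S$). L'image de cette factorisation par $\cf_{T/S}$ est la relation $R = T \times_{\pi^s(T/S)} T \inj T \times_{S} T$, dont le graphe est ouvert et ferm\'e puisque le morphisme diagonal de $\pi^s(T/S)/S$ l'est; c'est donc l'objet initial de ${\sf R}(T/S)$. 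R\'eciproquement, tout objet initial $R$ de ${\sf R}(T/S)$ fournit, via le quasi-inverse $R \mapsto (T \to T/R \to S)$ (la repr\'esentabilit\'e de $T/R$ dans $\et_{S}$ \'etant garantie par le th\'eor\`eme~\ref{t1}), un objet initial de $\e(T/S)$, donc le morphisme universel cherch\'e.

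Quant \`a l'assertion que ${\sf R}(T/S)$ est \'equivalente \`a un ensemble ordonn\'e, elle d\'ecoule imm\'ediatement de la propri\'et\'e correspondante pour $\e(T/S)$, d\'ej\`a rappel\'ee en~\ref{fact}: un morphisme de factorisations est unique lorsqu'il existe, car $h$ est un \'epimorphisme (\ref{sur}); cette unicit\'e se transporte par $\cf_{T/S}$ aux morphismes de ${\sf R}(T/S)$, qui ne sont d'ailleurs que les inclusions entre sous-sch\'emas ouverts et ferm\'es de $T\times_{S}T$.

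Il n'y a essentiellement aucun obstacle substantiel: le corollaire est un dictionnaire formel entre deux descriptions \'equivalentes d'un m\^eme objet universel, le travail de fond ayant d\'ej\`a \'et\'e accompli par les th\'eor\`emes~\ref{t1} et~\ref{p7.1}. Le seul point \`a signaler est que l'hypoth\`ese de finitude sur les composantes irr\'eductibles de $S$ est implicitement n\'ecessaire pour que l'\'enonc\'e \og $R$ telle que $T \to T/R$ soit universel \fg\, soit non vide.
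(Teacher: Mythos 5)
Votre d\'emonstration est correcte et suit exactement la m\^eme voie que celle du texte, qui se borne \`a dire que le corollaire est le crit\`ere~\ref{p3.1} transport\'e dans la cat\'egorie des relations d'\'equivalence via l'\'equivalence du th\'eor\`eme~\ref{p7.1}. Vos pr\'ecisions suppl\'ementaires (identification explicite de l'image $T\times_{\pi^s(T/S)}T$, unicit\'e des fl\`eches transport\'ee par $\cf_{T/S}$, n\'ecessit\'e implicite de l'hypoth\`ese de finitude sur les composantes irr\'eductibles de $S$) sont toutes exactes et ne font qu'expliciter ce que le texte laisse au lecteur.
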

\begin{proof} C'est le critère \ref{p3.1} transféré dans la catégorie des relations d'équivalence grâce au théorème précédent.
\end{proof}
\begin{rque}  La démonstration (\ref{proof}) de l'existence du foncteur adjoint utilise implicitement la pleine fidélité du foncteur de (\ref{p7.1}).
\end{rque}

\begin{exs}
{\bf (1) (Morphisme d'anneaux artiniens)}. \, Soit $A \to B$ un homomorphisme d'anneaux locaux artiniens, faisant de $B$ un $A$-module libre de type fini. Notons $f: T=\s(B) \to S=\s(A)$ le morphisme de schémas associé. On va expliciter, pour ce morphisme,  la relation d'équivalence \og initiale\fg (\ref{cor1}), et le schéma $\pi^s(T/S)$.

\begin{lemme}\label{of} Soient $X$ un schéma et $U$ une partie ouverte et fermée de l'espace sous-jacent à $X$. Notons $\wt{U} = (U,{ \oo_{X}}|U)$ le schéma induit par $X$ sur l'ouvert $U$, et $j: \wt{U} \to X$ l'immersion ouverte associée. Alors $j$ est aussi une immersion fermée.
\end{lemme}
\begin{proof} On peut invoquer \cite[4.2.2.]{EGAI}; on peut aussi considérer l'ouvert fermé $V$ complémentaire de $U$ et l'immersion ouverte $j': \wt{V} \to X$. La décomposition en somme directe  $X = \wt{U} \sqcup \wt{V}$ conduit à l'isomorphisme de $\oo_{X}$-algèbres $\oo_{X} \by{\sim} j_{*}(\oo_{\wt{U}}) \times j'_{*}(\oo_{\wt{V}})$; l'homomorphisme $\oo_{X}\to j_{*}(\oo_{\wt{U}})$ est donc surjectif.
\end{proof}
Revenons au morphisme $T\to S$ de schémas locaux artiniens. Comme le schéma $T\times_{S}T$ est fini discret, le sous-schéma diagonal $\Delta \subset T\times_{S}T$ est porté par un \emph{ensemble} ouvert et fermé; notons  $\wt{\Delta}\to T\times_{S}T$  l'immersion ouverte et fermée indiquée dans le lemme précédent. Vérifions que ce sous-schéma $\wt{\Delta}$ est le graphe de la relation d'équivalence \og initiale \fg; ce schéma est évidemment minimal parmi les sous-schémas ouverts fermés de $T\times_{S}T$ qui contiennent $\Delta$; il reste donc à voir que c'est le graphe d'une relation d'équivalence. Or, cette relation est clairement réflexive et symétrique; la transitivité se traduit par l'inclusion (cf. \ref{par3})
$$
{p'_{0}}^{-1}(\wt{\Delta}) \cap {p'_{2}}^{-1}(\wt{\Delta}) \subset {p'_{1}}^{-1}(\wt{\Delta}) .
$$
Le schéma ${p'_{i}}^{-1}(\wt{\Delta})$ est un sous-schéma ouvert fermé de $T^3$ d'espace topologique sous-jacent l'ouvert et fermé ${p'_{i}}^{-1}(\Delta)$; ce schéma est donc égal à $\wt{{p'_{i}}^{-1}(\Delta)}$; or, l'inclusion
$$
{p'_{0}}^{-1}(\Delta) \cap {p'_{2}}^{-1}(\Delta) \subset {p'_{1}}^{-1}(\Delta) 
$$
est vraie puisqu'elle traduit la transitivité de la relation d'égalité dans $T$.\qed\\

Le schéma quotient $E = T/\wt{\Delta}$ est donc isomorphe à l'enveloppe $S$-étale  $\pi^s(T/S)$, et on a un isomorphisme (\ref{t1})
$$
\wt{\Delta} \by{\sim} T\times_{E}T .
$$
Puisque $T$ est fini sur $S$, le morphisme diagonal $\Delta \to T\times_{S}T$ est une immersion fermée; il résulte alors de l'isomorphisme précédent que l'immersion fermée $\Delta \to T\times_{E}T$ est surjective, donc que le morphisme $T\to E$ est radiciel (\cite[3.7.1]{EGAI}); on retrouve ainsi, lorsque $T\to S$ est une extension finie de corps, la factorisation usuelle en radicielle et étale (\cite[V, \S7, Prop. 12]{A}).

Traduisons cela en termes des anneaux sous-jacents : soit $\mathfrak{n}$ l'idéal maximal de $B\otimes_{A}B$ image réciproque,  par l'homomorphisme $B\otimes_{A}B \to B$, de l'idéal maximal de $B$; alors le sous schéma ouvert et fermé $\wt{\Delta}$ de $T\times_{S}T = \s(B\otimes_{A}B)$ associé à l'ensemble ouvert et fermé $\{ \mathfrak{n} \}$  est le spectre de $(B\otimes_{A}B)_{\mathfrak{n}}$, et le morphisme 
$$\s((B\otimes_{A}B)_{\mathfrak{n}}) \to \s(B\otimes_{A}B)
$$ 
est le graphe de la relation \og initiale \fg ; l'enveloppe étale $E = \pi^s(T/S)$ est donc le spectre de l'anneau défini par l'exactitude de la suite
$$
\xymatrix{\Gamma(E, \oo_{E}) \ar[r] & B \ar@<0.5ex>[r]  \ar@<-0.5ex>[r] &  (B\otimes_{A}B)_{\mathfrak{n}}}
$$
 Lorsque $A$ et  $B$ sont des corps, cette description de la cl\^{o}ture séparable se trouve déjà dans \cite{Fév69}, où sa vérification repose sur la remarque suivante : soit $A(x) \subset B$ le sous-corps engendré par un élément $x$ du noyau ; alors l'homomorphisme $A(x)\otimes_{A}A(x) \to A(x)$ est plat, autrement dit $A(x)$ est étale sur $A$.
 
 Enfin, en \ref{p10}, ce résultat est étendu au cas où $A$ est un corps et où $B$ est une algèbre de type fini sur $A$; en effet, la partie multiplicative complémentaire de $\mathfrak{n}$  dans $B\times_{A}B$ est égale à l'ensemble des éléments de cet anneau dont l'image par le morphisme $m: B\times_{A}B \to B$ est inversible, et cette partie conduit au même anneau de fractions que la partie $1+ {\rm Ker}(m) \subset B\times_{A}B$.\\

{\bf (2)  (Retour sur les $k$-algèbres)}\, Dans ce numéro et dans le suivant, on  considère des algèbres de type fini sur un corps $k$, et on donne une  description de $\pi_{0}  (= \pi^{s})$ comme noyau d'une double flèche explicite.

\begin{lemme}\label{p9} Soit $T = \s(A)$ le spectre d'une $k$-algèbre de type fini. 
\begin{enumerate}
\item Soit $R \by d T\times_{k}T$ une immersion ouverte qui est le graphe d'une relation d'équivalence. Alors $d$ est aussi une immersion fermée, et le quotient $T/R$ est étale fini sur $k$.

\item Soit $U \subset T\times_{k}T$ un ouvert contenant la diagonale. Alors le sous-anneau de  $A$
$$
\xymatrix{{\rm Ker}(A \ar@<0.5ex>[r]  \ar@<-0.5ex>[r] &  \Gamma(U))}
$$
est une $k$-algèbre étale finie.
\end{enumerate}
\end{lemme}

\begin{proof} La démonstration de ce résultat m'a été communiquée par {\sc Raynaud}, en 1970 (apparemment non publié). 

(1). On peut supposer que $k$ est séparablement clos. Comme $S = \s(k)$ est réduit à un point, tout morphisme $X \to S$ est universellement ouvert \cite[2.4.9]{EGAIV2}. Montrons  que les classes d'équivalence ensemblistes selon $R$ sont des ouverts de $T$.  Pour $x$ dans $T$, le morphisme composé $ x \to T \to S$ est universellement ouvert. Par changement de base, on en tire que le morphisme vertical de gauche du diagramme
$$
\xymatrix{T\times_{S}x = p_{0}^{-1}(x) \ar[r] \ar[d] & x \ar[d] \\
T\times_{S}T \ar[r]^{p_{0}} \ar[d]_{p_{1}} & T \ar[d]\\
T \ar[r] & S}
$$
est ouvert ; la classe d'équivalence ensembliste de $x$, à savoir $p_{1}(R \cap p_{0}^{-1}(x))$ est donc ouverte.

Soit $T = V_{1} \sqcup \cdots \sqcup V_{n}$ la partition de $T$ en sous-schémas ouverts, donnée par les classes d'équivalences  ($T$ est noethérien) ; ce sont des $k$-schémas de type fini. Désignons par $d_{ij} : R_{ij} \to V_{i}\times_{S}V_{j}$ la restriction de l'immersion ouverte $d$ à l'ouvert  $V_{i}\times_{S}V_{j}$ ; pour $i \neq j$ on a $R_{ij} = \emptyset $ ; montrons que $R_{ii} \to V_{i}\times_{S}V_{i}$ est un isomorphisme. Notons $V'_{i}$ l'ensemble des points fermés de $V_{i}$ ; comme le corps $k$ est séparablement clos {\it l'ensemble} $(V_{i}\times_{S}V_{j})'$ des points fermés de ce produit fibré est identique à $V'_{i} \times V'_{j}$ (\cite[p. 452]{EGAI}) ; par suite, l'application d'ensembles $R'_{ii} \to V_{i}' \times V'_{i}$ est surjective, donc l'immersion ouverte $R_{ii} \to V_{i} \times_{S} V_{i}$ induit une surjection entre les ensembles des points fermés ; c'est un isomorphisme. 
On voit alors directement que le schéma quotient est isomorphe au schéma somme de $n$ copies de $S$; mais on peut aussi invoquer le théorème (\ref{t1}); en effet, ce qui a été dit des $R_{ij}$ montre que $d: R \to T\times_{S}T$ est aussi une immersion fermée, et ce théorème entraîne alors   que le schéma quotient $T/R$ est étale et que $R$ s'identifie à $T\times_{T/R}T$.\\

(2) Appliquons  la première partie à la relation d'équivalence $R$ engendrée par $U$ ; comme $U$ contient par hypothèse la diagonale, $R$ est le schéma induit sur l'ouvert réunion des $U_{n}$, où $U_{0}$ est la réunion de $U$ et de son image par symétrisation et où
$$
U_{n+1} = p_{1}'(p_{0}'^{-1}(U_{n}) \cap p_{2}'^{-1}(U_{n})) .
$$
Cette construction assure la ``transitivité'' de $R$ (cf. \ref{par3}).
D'après la partie (1), le schéma quotient $E = T/R$ est étale fini sur $S$, le morphisme $R \to T\times_{E}T$ est un isomorphisme et  la suite $\xymatrix{T\times_{E}T  \ar@<0.5ex>[r]  \ar@<-0.5ex>[r] &T \ar[r]& E}$ est exacte; on en tire que $E$ est le spectre de l'anneau
$$
 {\rm Ker}(\xymatrix{A\ar@<0.5ex>[r]  \ar@<-0.5ex>[r] &  \Gamma(R)}) .
$$
Il reste à vérifier l'égalité
$$
{\rm Ker}(\xymatrix{A\ar@<0.5ex>[r]  \ar@<-0.5ex>[r] &  \Gamma(U)}) = {\rm Ker}(\xymatrix{A\ar@<0.5ex>[r]  \ar@<-0.5ex>[r] &  \Gamma(R)}).
$$
Désignons par $B$ l'anneau de gauche, et posons $F = \s(B)$ ; l'inclusion $U \subset R$ entra\^{i}ne que  $B$ contient l'anneau de droite; par définition de $B$, les deux morphismes composés
$$
\xymatrix{U \ar[r] &  T\times_{S}T \ar@<0.5ex>[r]  \ar@<-0.5ex>[r] &T \ar[r]& F}
$$
sont égaux ; cela montre que $U$ est contenu dans le fermé $T\times_{F}T$ ; mais ce dernier est une relation d'équivalence ; on a donc l'inclusion
$$
R \subset T\times_{F}T,
$$
autrement dit les deux morphismes composés $\xymatrix{R  \ar@<0.5ex>[r]  \ar@<-0.5ex>[r] &T \ar[r] & F}$ sont égaux, et cela montre que $B$ est contenu dans l'anneau de droite. 
\end{proof}

\begin{prop}\label{p10} Soient $A$ une $k$-algèbre de type fini et $J$ l'idéal noyau de l'homomorphisme $m : A\otimes_{k}A \to A$. Alors la cl\^{o}ture algébrique séparable de $k$ dans $A$ est l'anneau
$$
{\rm Ker}(\xymatrix{A\ar@<0.5ex>[r]  \ar@<-0.5ex>[r] &  (1+J)^{-1}A\otimes_{k}A})
$$
\end{prop}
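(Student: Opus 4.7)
L'id�e est d'interpr�ter $(1+J)^{-1}(A\otimes_k A)$ comme l'anneau des sections d'un ouvert de $T\times_k T$ contenant la diagonale, puis d'appliquer le Lemme \ref{p9} $(2)$. Posons $U := \Spec((1+J)^{-1}(A\otimes_k A))$ : c'est bien un ouvert de $T\times_k T$ qui contient la diagonale, puisque $m$ envoie tout �l�ment de $1+J$ sur $1$, donc sur un inversible de $A$, de sorte qu'aucun premier de la diagonale ne rencontre $1+J$. D'apr�s le Lemme \ref{p9} $(2)$, le sous-anneau
$$
L' \; := \; \mathrm{Ker}\bigl(A \rightrightarrows (1+J)^{-1}(A\otimes_k A)\bigr)
$$
est donc une $k$-alg�bre �tale finie contenue dans $A$; vu la propri�t� universelle de la cl�ture alg�brique s�parable $L$ de $k$ dans $A$ (Proposition \ref{p1}), on obtient imm�diatement l'inclusion $L' \subset L$.

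Il reste � �tablir $L \subset L'$, c'est-�-dire, pour tout $l \in L$, que $l\otimes 1 = 1\otimes l$ dans $(1+J)^{-1}(A\otimes_k A)$. Pour cela je me ram�ne � la sous-alg�bre $k[l] \subset A$, qui est une extension �tale finie de $k$. Alors $k[l]\otimes_k k[l]$ est �tale sur $k[l]$, et la surjection $m_{k[l]}: k[l]\otimes_k k[l] \to k[l]$ correspond � la projection sur un facteur : on a une d�composition
$$
k[l]\otimes_k k[l] \; = \; k[l]\cdot e \;\oplus\; J_l,
$$
o� $e$ est un idempotent et $J_l := \mathrm{Ker}(m_{k[l]}) = (1-e)(k[l]\otimes_k k[l])$. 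On en d�duit d'une part $1-e \in J_l$, donc $e = 1 - (1-e) \in 1 + J_l$, et d'autre part $e \cdot (l\otimes 1 - 1\otimes l) = 0$, puisque $l\otimes 1 - 1\otimes l \in J_l$ et que $e J_l = 0$. Dans la localisation $(1+J_l)^{-1}(k[l]\otimes_k k[l])$, o� $e$ est inversible, cela donne $l\otimes 1 = 1\otimes l$. Cette �galit� se transporte enfin � $(1+J)^{-1}(A\otimes_k A)$ gr�ce au morphisme canonique induit par l'inclusion $k[l]\hookrightarrow A$, qui envoie $J_l$ dans $J$, donc $1+J_l$ dans $1+J$.

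Le point d�licat est cette derni�re �tape : pour chaque $l \in L$, exhiber un �l�ment de $1+J$ (ici l'image de $e$) qui annule $l\otimes 1 - 1\otimes l$. L'ingr�dient d�cisif est l'�talitude de $k[l]/k$, qui fournit l'idempotent scindant $m_{k[l]}$ ; sans cette hypoth�se, la trivialisation de $l\otimes 1 - 1\otimes l$ apr�s localisation en $1+J$ ne serait en g�n�ral plus possible, comme le montre la remarque de \ref{gen} sur ${\bf Z} \to {\bf Z}[\sqrt 5]$.
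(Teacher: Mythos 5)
Your second half (the inclusion $L\subset L'$: every $l\in A$ algebraic and separable over $k$ is killed by the double arrow) is correct, and it takes a genuinely different route from the paper. The paper proves this inclusion by an explicit polynomial manipulation: from $p(X)-p(Y)=(X-Y)\bigl(p_1(X,X)+(X-Y)p_2(X,Y)\bigr)$ it extracts an element $y$ annihilating $x\otimes1-1\otimes x$ with $m(y)=p'(x)$ invertible, which forces it to enlarge $1+J$ to the multiplicative set $\Sigma$ of elements whose image under $m$ is a unit (and to check that this does not change the localization). You instead produce the annihilator as the image of the diagonal idempotent $e$ of $k[l]\otimes_k k[l]$, which lies directly in $1+J_l\subset 1+J$; this is cleaner and uses the \'etaleness of $k[l]/k$ exactly where it matters.

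The gap is in the first half ($L'\subset L$). You apply Lemma \ref{p9}(2) to $U:=\Spec\bigl((1+J)^{-1}(A\otimes_kA)\bigr)$, asserting that it is an \emph{open} of $T\times_kT$ containing the diagonal. Localizing at the multiplicative set $1+J$ does not yield an open subscheme in general: the image of $\Spec\bigl((1+J)^{-1}B\bigr)\to\Spec(B)$ is the set of primes $\mathfrak p$ with $\mathfrak p\cap(1+J)=\emptyset$, i.e.\ with $\mathfrak p+J\neq B$, i.e.\ the primes whose closure meets the diagonal --- a pro-open $\bigcap_{t\in J}D(1+t)$, not an open. Already for $A=k[X]$ this set is not open in $\mathbf A^2$: its complement contains every closed point off the diagonal, hence is dense, so it could only be closed if it were everything. (And even on an open set one would still have to identify the localization with the ring of sections.) So Lemma \ref{p9}(2) does not apply to your $U$ as stated. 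The repair is exactly what the paper does: $x$ lies in $\mathrm{Ker}\bigl(A\rightrightarrows(1+J)^{-1}(A\otimes_kA)\bigr)$ if and only if $(1+t)(x\otimes1-1\otimes x)=0$ for some single $t\in J$, i.e.\ $x\in K\{t\}:=\mathrm{Ker}\bigl(A\rightrightarrows(A\otimes_kA)_{1+t}\bigr)$; each $D(1+t)$ is an honest open containing the diagonal, so Lemma \ref{p9}(2) shows each $K\{t\}$ is finite \'etale over $k$, hence contained in $L$, and $L'$ is the (directed) union of the $K\{t\}$.
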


\begin{proof} Notons $K$ l'anneau noyau de la double flèche, et montrons d'abord que tout élément $x \in A$ qui est algébrique et séparable sur $k$ est dans $K$. Pour simplifier la vérification, on élargit la partie multiplicative $1+ J$  en  $\Sigma = \{ t \in  A\otimes_{k}A  \mid m(t) \,{\rm est\, inversible\, dans\, } A\}$ ; cela ne change pas l'anneau des fractions.

À un polyn\^{o}me  $p(X) \in k[X]$ on associe les polyn\^{o}mes $p_{1}(X, Y)$ et $p_{2}(X, Y)$ définis par les relations suivantes dans $k[X, Y]$
$$
p(X) - p(Y) = (X - Y)p_{1}(X, Y),\hspace{0.5cm} p_{1}(X, Y) - p_{1}(X, X) = (X - Y)p_{2}(X, Y) ,
$$
de sorte qu'on a la relation
$$
p(X) - p(Y) = (X - Y)(p_{1}(X, X) + (X - Y)p_{2}(X, Y)).
$$

Puisque l'élément $x \in A$ est algébrique et séparable, il est racine d'un polyn\^{o}me $p(X) \in k[X]$ tel que $p'(x)$ soit inversible dans $A$.
L'égalité précédente  donne, dans $A\otimes_{k}A$,
$$
0 = (x\otimes1 - 1\otimes x)[p_{1}(x\otimes 1, x\otimes 1) + (x\otimes1 - 1\otimes x)p_{2}(x\otimes1, 1\otimes x)].
$$
 Notons $y$ l'élément de la seconde parenthèse ; compte tenu de ce que $p_{1}(X, X) = p'(X)$, on a  $m(y) = p'(x)$, donc $y$ est dans  $\Sigma$ et cela montre que $x$ est dans le noyau $K$.

Pour vérifier l'inclusion dans l'autre sens, revenons à la partie multiplicative $1+J$. Pour $t \in J$, notons, faute de mieux, par $K\{t\}$ le noyau
$$
K\{t\} = {\rm Ker}(\xymatrix{A\ar@<0.5ex>[r]  \ar@<-0.5ex>[r] &  (A\otimes_{k}A)_{1+t}})
$$
Il est clair que l'ouvert $D(1+t)$ contient la diagonale de $\s(A\otimes_{k}A)$ ; le lemme  (\ref{p9}, (2)) montre donc que $K\{t\}$ est une $k$-algèbre finie étale, donc que $K\{t\} \subset K$ ; d'autre part, la définition des anneaux de fractions entra\^{i}ne que  pour tout $x \in K$, il existe $t \in J$ tel que l'on ait, dans $A\otimes_{k}A$ la relation
$$
0 = (x\otimes1 - 1\otimes x)(1+t) ,
$$
c'est-à-dire $x \in K\{t\}$.
\end{proof}
\end{exs}

\subsection{Revêtements étales galoisiens}\

L'énoncé qui suit  est bien connu (voir p.ex. \cite[5.3.8]{Sza09}), mais le théorème \ref{p7.1} permet d'en donner une démonstration essentiellement triviale. Inversement, on peut voir le théorème \ref{p7.1} comme la généralisation \og naturelle\fg\, de la correspondance de Galois, les relations d'équivalences à graphe ouvert  fermé rempla\c cant les sous-groupes du groupe de Galois; cette analogie est déjà évoquée par {\sc Grothendieck} en  1960 (TDTE III, p.212-03). 
Dans le contexte galoisien qui suit, il est nécessaire de se restreindre aux schémas intermédiaires qui sont \emph{séparés} sur la base. En effet,
on a vu en \ref{pr1.3} qu'un morphisme fini étale $f: T \to S$, possèdant une fibre fermée non connexe, admet une factorisation  $T\to F \to S$, où $F\to S$ est  un morphisme étale birationnel, universellement fermé, mais non séparé sur $S$, et donc non entier; un tel $F$ ne peut évidemment pas être associé à un sous-groupe.

\begin{prop}\phantomsection\label{p4.1} {\rm (Correspondance galoisienne)} Soient $G$ un groupe fini et $f: T \to S$ un morphisme étale fini galoisien de groupe $G$, de sorte qu'on a un isomorphisme de $S$-schémas
$$
G \times T \to T\times_{S}T.
$$ 
On suppose que $T$ est connexe.
Pour tout sous-groupe $H \subset G$, $T/H$ est un schéma étale séparé et les morphismes $T \to T/H \to S$ forment une factorisation  de $f$, au sens de \ref{s4.2}. Réciproquement, pour une telle factorisation $T \to E \to S$, il existe un sous-groupe $H$ de $G$ tel que le carré
$$
\xymatrix{H \times T \ar[d] \ar[r]^{\sim} & T\times_{E}T \ar[d]\\
G\times T \ar[r]^{\sim} & T\times_{S}T}
$$
soit cartésien. Cette correspondance entre sous-groupes et factorisations est bijective.
\end{prop}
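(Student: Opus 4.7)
L'idée est d'utiliser le théorème \ref{p7.1} pour traduire l'énoncé en une correspondance entre sous-groupes de $G$ et relations d'équivalence sur $T$ à graphe ouvert et fermé dans $T\times_S T$, puis d'exploiter la décomposition explicite $T\times_S T \simeq G\times T$ et la connexité de $T$.

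La première étape est de constater que, $T$ étant connexe et $G$ fini (donc discret), la décomposition
\[
T\times_S T \; \simeq \; G\times T \; = \; \bigsqcup_{g\in G}\,\{g\}\times T
\]
est la décomposition en composantes connexes. Par conséquent, tout sous-schéma ouvert et fermé $R\subset T\times_S T$ est une réunion de telles composantes, c'est-à-dire de la forme $H'\times T$ pour une partie $H'\subset G$ uniquement déterminée. Dans ce dictionnaire, le graphe diagonal correspond au singleton $\{e\}$, le morphisme d'échange $(x,y)\mapsto(y,x)$ correspond à $g\mapsto g^{-1}$, et la composition des couples (via la triple projection $p'_0,p'_1,p'_2$ sur $T\times_S T\times_S T$, cf.\ \ref{par3}) correspond à la multiplication de $G$. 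Il en résulte que $R=H'\times T$ est le graphe d'une relation d'équivalence si et seulement si $H'$ contient $e$, est stable par inverse et par multiplication, c'est-à-dire si et seulement si $H'$ est un sous-groupe $H\subset G$.

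Combinant cette observation avec le théorème \ref{p7.1}, on obtient une bijection entre les sous-groupes $H\subset G$ et les classes d'isomorphisme de factorisations $T\to E\to S$ de $f$ au sens de \ref{fact} : à $H$ on associe le quotient $E=T/(H\times T)$, qui est étale, quasi-compact et séparé sur $S$ d'après le théorème \ref{t1} (appliqué à la relation d'équivalence à graphe ouvert et fermé $H\times T\rightrightarrows T$), et pour lequel le carré cartésien de l'énoncé est, par construction même, le carré
\[
\xymatrix{H\times T \ar[d]\ar[r]^-{\sim} & T\times_E T\ar[d]\\
G\times T \ar[r]^-{\sim} & T\times_S T\,.}
\]
Réciproquement, à une factorisation $T\to E\to S$ on associe le sous-schéma ouvert et fermé $T\times_E T\subset T\times_S T \simeq G\times T$, qui est de la forme $H\times T$ pour un unique sous-groupe $H\subset G$ d'après l'analyse précédente.

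Le point le plus délicat est la justification rigoureuse de l'étape de décomposition en composantes connexes : il s'agit de vérifier que, $T$ étant connexe et $G$ discret fini, tout sous-schéma ouvert et fermé de $G\times T$ est effectivement une réunion disjointe de copies $\{g\}\times T$ ; cela découle du fait que chaque $\{g\}\times T$ est connexe et ouvert-fermé dans $G\times T$, donc forme une composante connexe, et qu'un ouvert-fermé est une réunion de composantes connexes. Tout le reste relève de vérifications formelles de stabilité par composition, par passage à l'inverse et par inclusion de la diagonale, qui sont le pendant ensembliste exact des axiomes de sous-groupe.
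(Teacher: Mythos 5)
Votre démonstration est correcte et suit essentiellement la même voie que celle du texte : on utilise la connexité de $T$ pour identifier les ouverts fermés de $T\times_{S}T\simeq G\times T$ aux parties de $G$, on traduit les axiomes de relation d'équivalence en axiomes de sous-groupe, et on conclut par \ref{t1}/\ref{p7.1}. La seule nuance est que vous obtenez la stabilité par inverse via la symétrie de la relation, là où le texte se contente de la stabilité par multiplication et invoque la finitude de $H$.
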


\begin{proof}
Pour un sous-groupe $H \subset G$, le morphisme $H\times T \to T\times_{S}T$ est le graphe (ouvert et fermé) d'une relation d'équivalence, et le quotient par cette relation est isomorphe au faisceau $T/H$, qui est donc un schéma étale et séparé sur $S$.

Réciproquement, considérons une factorisation $T\to E \to S$ et la relations d'équivalence associée $R=T\times_{E}T \to T\times_{S}T$. \ À chaque élément $g \in G$ est associée l'immersion $i_{g}: T \to T\times_{S} T, \; t \mapsto (gt, t)$. Comme $R$ est un ouvert fermé de $T\times_{S}T$, $i_{g}^{-1}(R)$ est un ouvert fermé du schéma $T$, lequel est supposé connexe. Notons $H \subset G$ l'ensemble des $g \in G$ tels que $i_{g}^{-1}(R) =T$; on a donc  $R = H\times T$.Vérifions, en termes ensemblistes, que $H$ est un sous-groupe de $G$: un élément $h$ est dans $G$ si et seulement si on a $(ht, t) \in R$ pour tout $t \in T$; en particulier, pour $h$ et $h'$ dans $H$, on a, pour tout $t \in T, (ht, t) \in R$ et $(h't, t) \in R$ ; d'où l'on tire pour tout $t \in T$, $(hh't, h't) \in R$ et $(h't, t) \in R$ ; la transitivité de $R$ entraîne que $hh'$ est dans $H$; $H$ étant fini, c'est bien un sous-groupe de $G$.
\end{proof}

\section{Des équivalences entre catégories de factorisation}

 Soient $f' : T' \to S$ et $f: T \to S$ des morphismes plats de présentation finie; un morphisme $u:T' \to T$  dans $\ppf_{S}$ induit un foncteur entre les catégories de factorisaton \ref{fact}
 $$
 \e(u) : \e(T/S) \; \to \; \e(T'/S), 
 $$
 et, si le foncteur adjoint existe, un $S$-morphisme
 $$
 \pi^s(u) : \pi^s(T'/S) \, \to \, \pi^s(T/S).
$$
 
On va dégager des propriétés de $u$ qui impliqueront que $\e(u)$ est une équivalence, et par suite (\ref{l3.1}), que $\pi^s(u)$ est un isomorphisme.

\subsection{\label{eq.0}} Rappelons (\ref{p7.1}) que le foncteur \og carré fibré \fg
$$
\cf_{T/S} : \e(T/S) \to \mathsf{R}(T/S)
$$
qui associe à la factorisation $T\to E \to S$ l'ouvert fermé   $T\times_{E}T \subset T\times_{S}T$,  est une équivalence de catégories.\\

Introduisons, comme dans \cite[18.5.3]{EGAIV4}, l'ensemble $\of(X)$ des ouverts fermés d'un schéma $X$; ce sont les sous-schémas induits sur les ouverts $U \subset X$, qui sont aussi (topologiquement) fermés; il s'avère (\ref{of}) que l'immersion ouverte  associée $(U, \oo_{X}|U)  \to (X, \oo_{X})$ est alors aussi une immersion fermée. Cet ensemble $\of(X)$ est ordonné par l'inclusion des sous-schémas, et on peut le considérer aussi comme une catégorie.\\

\n Le foncteur \og oubli de la structure de relation d'équivalence \fg\,  $\mathsf{R}(T/S) \to \of(T\times_{S}T)$ est pleinement fidèle.\\

\begin{lemme} Soient $f' : T' \to S$ et $f: T \to S$ des morphismes plats de présentation finie, et $u:T' \to T$ un $S$-morphisme de $\ppf_{S}$; désignons, pour simplifier, par  $\omega = u\times u$ le morphisme $T'\times_{S}T' \to T\times_{S}T$ induit par $u$, et par $\omega^*$ le foncteur image réciproque par $\omega$. Considérons le diagramme de foncteurs
$$
\xymatrix{\e(T/S) \ar[d]_{\e(u)} \ar[r]^{\cf_{T/S}} & \mathsf{R}(T/S) \ar[d]_{\mathsf{R}(u)} \ar[r] & \of(T\times_{S}T) \ar[d]^{\omega^*}\\
\e(T'/S) \ar[r]_{\cf_{T'/S}} & \mathsf{R}(T'/S)  \ar[r] & \of(T'\times_{S}T')  ,}
$$
où $\mathsf{R}(u)$ désigne l'induction habituelle pour les relations d'équivalence: $R \mapsto \omega^*(R)$ {\rm \cite[V, 3, {\bf a)}]{SGA3}}.
Alors les deux carrés sont commutatifs (à isomorphisme canonique près).
\end{lemme}

\begin{proof} 
En effet, soit $T\by h E \to S$ une factorisation, et soit $T'\by {h'} E' \to S$ son image par $\e(u)$ (\ref{l3.1}); alors dans le carré commutatif 
$$
\xymatrix{T' \ar[r]^{u} \ar[d]_{h'} & T \ar[d]^{h}\\
E' \ar[r]_{u'_{E}} & E ,} 
$$
 $h'$ est surjectif et $u'_{E}$ est une immersion ouverte, de sorte que le morphisme $R' = T'\times_{E'}T' \to T'\times_{E}T'$ est un isomorphisme;  de plus, dans le diagramme suivant
 $$
 \xymatrix{R' \ar[r]^-{\sim} & T'\times_{E}T' \ar[d] \ar[r] & T\times_{E}T\ar[d] \ar@{=}[r] &R\\
 & T'\times_{S}T' \ar[r]_{u\times u} &T\times_{S}T &}
 $$
 le carré est cartésien. Cela montre la commutativité du carré de gauche du diagramme de l'énoncé; la commutativité du carré de droite est évidente.
\end{proof}

\begin{prop}\label{eq.1} Gardons les hypothèses et les notations du lemme. On suppose que  $u$ est \scd, et que le morphisme $\omega^*$ induit, par image réciproque,  une équivalence entre les catégories d'ouverts fermés: 
$$
\omega^* :  \of(T\times_{S}T) \to  \of(T'\times_{S}T') .
$$
Alors les foncteurs $\mathsf{R}(u)$ et $\e(u)$ sont  des équivalences de catégories.\end{prop}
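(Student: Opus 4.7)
Mon plan consiste à utiliser le lemme précédent pour ramener l'énoncé sur $\e(u)$ à un énoncé sur $\mathsf{R}(u)$. Dans le diagramme commutatif du lemme, les foncteurs horizontaux $\cf_{T/S}$ et $\cf_{T'/S}$ sont des équivalences de catégories (théorème \ref{p7.1}); donc $\e(u)$ est une équivalence si et seulement si $\mathsf{R}(u)$ en est une. De plus, comme les inclusions $\mathsf{R}(T/S) \hookrightarrow \of(T\times_{S}T)$ et $\mathsf{R}(T'/S) \hookrightarrow \of(T'\times_{S}T')$ sont pleinement fidèles, et que $\omega^*$ est supposé être une équivalence sur les ouverts fermés, la pleine fidélité de $\mathsf{R}(u)$ est immédiate. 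Il reste donc à établir son essentielle surjectivité.

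Soit $R' \in \mathsf{R}(T'/S)$. Par essentielle surjectivité de $\omega^*$ sur $\of$, on dispose de $R \in \of(T\times_{S}T)$ tel que $\omega^*(R) \simeq R'$. Il s'agit alors de vérifier que $R$ est le graphe d'une relation d'équivalence dans $T$, ce qui se traduit par trois conditions dans les ensembles d'ouverts fermés pertinents: réflexivité ($\Delta_{T}^{-1}(R) = T$), symétrie (stabilité par l'involution $\sigma_{T}$ de $T\times_{S}T$ qui permute les facteurs), et transitivité (l'inclusion usuelle dans $\of(T\times_{S}T\times_{S}T)$). Chacune est satisfaite par $R'$; pour la transférer à $R$, j'utiliserai l'injectivité des applications $u^*$, $(u\times u)^* = \omega^*$ et $(u\times u\times u)^*$ sur les ouverts fermés correspondants.

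C'est ici que se situe l'obstacle principal: établir ces injectivités. Comme tous les objets de $\ppf_{S}$ sont plats sur $S$, on peut décomposer $u\times u = (u\times 1_{T})\circ (1_{T'}\times u)$, chaque facteur étant un changement de base de $u$ par un morphisme plat; la partie \ref{s.schdom} du lemme \ref{pr1.1}, combinée à la stabilité de la propriété \scd\, par composition, montre que $u\times u$ est \scd, et un raisonnement identique s'applique à $u\times u\times u$. Or pour un morphisme \scd\, $v: X'\to X$, l'image topologique de $v$ est dense dans $X$ (car pour tout ouvert $V\subset X$ disjoint de $v(X')$, on a $\oo_{X}(V) \hookrightarrow \oo_{X'}(v^{-1}(V)) = 0$, donc $V=\emptyset$), et cette densité entraîne immédiatement l'injectivité de $v^* : \of(X) \to \of(X')$ et, par suite, la réflexion des inclusions.

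Il ne reste qu'à invoquer ces trois injectivités. Pour la réflexivité, $u^*(\Delta_{T}^{-1}(R)) = \Delta_{T'}^{-1}(\omega^*(R)) = \Delta_{T'}^{-1}(R')= T'$, d'où $\Delta_{T}^{-1}(R) = T$. Pour la symétrie, $\omega^*(\sigma_{T}^*(R)) = \sigma_{T'}^*(\omega^*(R)) = \sigma_{T'}^*(R')=R'=\omega^*(R)$, d'où $\sigma_{T}^*(R) = R$. Pour la transitivité, l'image par $(u\times u\times u)^*$ de l'inclusion voulue coïncide avec l'inclusion analogue pour $R'$, qui est vraie. Ainsi $R \in \mathsf{R}(T/S)$ et $\mathsf{R}(u)(R) \simeq R'$, ce qui achève la preuve de l'essentielle surjectivité, donc de la proposition.
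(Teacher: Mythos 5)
Votre d�monstration est correcte et suit le m�me squelette que celle du texte (r�duction � l'essentielle surjectivit� de $\mathsf{R}(u)$ via la pleine fid�lit� du foncteur oubli, rel�vement de $R'$ en un ouvert ferm� $R$ de $T\times_{S}T$, puis v�rification des trois axiomes), mais la v�rification des axiomes emprunte une voie sensiblement diff�rente. Le texte identifie explicitement le quasi-inverse de $\omega^*$ comme le foncteur d'image sch�matique $Z\mapsto \overline{Z}$, puis utilise que les projections plates $p'_{i}: T^3\to T^2$ pr�servent les images sch�matiques et que $\overline{Z_{1}\cap Z_{2}} = \overline{Z_{1}}\cap\overline{Z_{2}}$ pour transf�rer la r�flexivit� (via $\Delta\subset\overline{\Delta'}\subset\overline{\omega^*(R)}=R$) et la transitivit�. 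Vous court-circuitez tout cela : vous n'utilisez de l'hypoth�se \og \scd \fg\, que sa cons�quence topologique (densit� de l'image), laquelle suffit � rendre injectives les applications $u^*$, $\omega^*$ et $(u\times u\times u)^*$ sur les ouverts ferm�s, donc � r�fl�chir �galit�s et inclusions ; les trois axiomes descendent alors de $R'$ � $R$ par un calcul direct de commutation des images r�ciproques. Votre argument est plus �l�mentaire (il �vite \ref{im.sch} et la compatibilit� des adh�rences sch�matiques aux changements de base plats) au prix de ne pas exhiber le quasi-inverse $Z\mapsto\overline{Z}$, que le texte r�utilise ailleurs ; la fa�on dont vous �tablissez que $u\times u$ et $u\times u\times u$ sont \scd\, (d�composition en changements de base plats de $u$) est, elle, identique � celle du texte. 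Seule pr�cision � ajouter : pour appliquer \ref{pr1.1} � $u$, il faut noter que $u$, �tant un morphisme de $\ppf_{S}$, est automatiquement de pr�sentation finie, donc quasi-compact et quasi-s�par� (\ref{not}).
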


\begin{proof} 
Puisque le foncteur \og oubli \fg\, $\mathsf{R}(T/S) \to \of(T\times_{S}T)$ est pleinement fidèle, et que  $\omega^*$ induit une équivalence  sur les catégories d'ouverts fermés, il suffit de montrer que sa restriction $\sfR(u)$ aux relations d'équivalence est essentiellement surjective. Soit donc $R'$ une relation d'équivalence dans $T'$, dont le graphe est ouvert et fermé; par hypothèse, il existe un ouvert fermé  $R \in \of(T\times_{S}T)$ tel que $R' = \omega^*(R)$. Il faut montrer que $R$ est une relation d'équivalence dans $T$.\

 Comme d'habitude, nous écrirons les puissances d'un $S$-schéma en omettant l'indice $S$, donc  $T^2 = T\times_{S}T$ etc. Notons d'abord que la platitude de $f$ et celle de $f'$ entraînent que $\omega = u\times u$ est schématique dominant (\ref{s.schdom}), et idem pour $u\times u \times u$. 
 
 Pour un sous-schéma $Z$ de $T'^2$,  on note $\overline{Z}$ l'image schématique (\ref{im.sch}) du morphisme composé $Z \to T'^2 \by{\omega} T^2$. Si $U \subset T^2$ est ouvert fermé, alors le morphisme canonique
$$
\overline{\omega^*(U)} \to  U \leqno{(\star)}
$$
est un isomorphisme; en effet, le carré suivant est cartésien
$$
\xymatrix{{T'}^2 \ar[r]^{\omega} & T^2 \\
\omega^*(U) \ar[u] \ar[r]_-{\omega_{U}} & U \ar[u]} 
$$
 et $\omega_{U}$ est \scd, tout comme le morphisme $\omega$, puisque c'est la restriction de ce dernier à un ouvert; les images schématiques des morphismes verticaux sont donc égales.
 \medskip

Par hypothèse, le foncteur $\omega^*$ établit une équivalence entre les catégories d'ouverts fermés; la relation ($\star$) montre donc  que le foncteur $Z\mapsto \overline{Z}$ est un quasi-inverse de $\omega^*$; d'où l'on tire que le foncteur $Z \mapsto \omega^*(\overline{Z})$ est isomorphe à l'identité.  En particulier, pour des ouverts fermés $Z_{1}$ et $Z_{2}$ de $T'^2$, on a
$$
\overline{Z_{1}\cap Z_{2}} = \overline{Z_{1}}\cap\overline{ Z_{2}}.
$$

 Après ces préliminaires, reprenons l'ouvert fermé $R$ de $T^2$ tel que $\omega^*(R)$ soit une relation d'équivalence dans $T'$, et montrons que
 $R$ est une relation d'équivalence; il s'agit de vérifier les conditions (\ref{par1}), (\ref{par2}) et (\ref{par3}) de l'appendice qui expriment respectivement la réflexivité, la symétrie et la transitivité.\\

\itemize
\item (Réflexivité) Notons $\Delta'$  et $\Delta$ les sous-schémas diagonaux dans $T'^2$ et $T^2$; le morphisme $\Delta' \to \Delta$ induit par $\omega$ est égal à $u$; il est donc \scd. La commutativité du carré 
$$
\xymatrix{\Delta' \ar[r]^u \ar[d] & \Delta \ar[d]\\
T'^2 \ar[r]_{\omega} & T^2}
$$
implique alors que les images schématiques de $\Delta'$ et  $\Delta$ dans $T^2$ sont égales. Puisque la relation $\omega^*(R)$ est réflexive par hypothèse, on a $\Delta' \subset \omega^*(R)$; d'où l'on tire, en tenant compte de ($\star$), les inclusions suivantes.
$$
\Delta \subset \overline{\Delta} = \overline{\Delta'} \subset \overline{\omega^*(R)} = R.
 $$

 \item (Symétrie) Considérons l'automorphisme $\sigma$ de $T^2$ qui permute  les facteurs. L'inclusion $R \cap \sigma R \subset R$ d'ouverts fermés de $T^2$ devient une égalité par image réciproque par le morphisme $\omega$ qui est \scd. C'est donc une égalité. 
 \item (Transitivité) Il s'agit de vérifier l'inclusion
 $$
 {p'_{0}}^{-1}(R)  \cap  {p'_{2}}^{-1}(R)  \subset {p'_{1}}^{-1}(R) ,
 $$
 c'est-à-dire, d'après ($\star$), l'inclusion
  $$
 {p'_{0}}^{-1}(\overline{\omega^*(R)})  \cap  {p'_{2}}^{-1}(\overline{\omega^*(R)})  \subset {p'_{1}}^{-1}(\overline{\omega^*(R)}) .
 $$
 Puisque les morphismes de projection $p_{i}: T^{3} \to T^{2}$ sont plats, ils préservent la formation des images schématiques (\ref{im.sch}); on a donc ${p'_{i}}^{-1}(\overline{\omega^*(R)}) = \overline{{p'_{i}}^{-1}(\omega^*(R))}$.
 La formule  pour l'image schématique d'une intersection d'ouverts fermés, rappelée plus haut, et le fait que $\omega^*(R)$ soit transitive, permettent  de conclure.
\end{proof}


\subsection{Cas où les fibres de $u$ sont géométriquement connexes}

\begin{lemme}\label{l5.2} Soit $u: T'\to T$ un morphisme submersif de schémas {\rm \cite[3.10]{EGAI}}, dont les fibres sont connexes. Alors le foncteur $u^* : \of(T) \to \of(T')$ est une équivalence de catégories.
\end{lemme}
\begin{proof}Ce foncteur $u^*$ est essentiellement surjectif: en effet, soit $Z' \subset T'$ un ouvert fermé; alors on a l'égalité $u^{-1}(u(Z')) = Z'$; en effet,  pour $z' \in Z'$,  la fibre $u^{-1}(u(z'))$ est connexe par hypothèse, et elle rencontre (en $z'$) l'ouvert fermé $Z'$; elle est donc contenue dans $Z'$; il suffit donc de voir que $u(Z')$ est un ouvert fermé de $T$; mais $u$ est submersif et $Z' = u^{-1}(u(Z'))$ est un ouvert fermé de $T'$.

 La même égalité montre que le foncteur en question est pleinement fidèle.
\end{proof}

 \begin{prop} \label{inv.top}Soient $f:T\to S$  et  $f':T'\to S$ des morphismes plats et de présentation finie, et $u: T' \to T$ un $S$-morphisme \scd.  On suppose que le morphisme $u$ est universellement submersif et que ses fibres sont géométriquement connexes. Alors le foncteur
 $$
\e(u):  \e(T/S) \to \e(T'/S)
 $$
 est une équivalence de catégories.
 \end{prop}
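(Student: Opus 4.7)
Le plan est d'appliquer la proposition \ref{eq.1} au morphisme $u$. L'hypoth\`ese de dominance sch\'ematique \'etant donn\'ee, il restera \`a v\'erifier que le foncteur d'image r\'eciproque $\omega^{*}: \of(T\times_{S}T) \to \of(T'\times_{S}T')$ associ\'e \`a $\omega = u\times u$ est une \'equivalence de cat\'egories.

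Pour cela, je d\'ecomposerai $\omega$ en la composition
\[
T'\times_{S}T' \by{\omega_{2}} T\times_{S}T' \by{\omega_{1}} T\times_{S}T,
\]
o\`u $\omega_{2} = u\times_{S}1_{T'}$ et $\omega_{1} = 1_{T}\times_{S}u$. L'\'etape cl\'e consiste \`a observer que chacun des morphismes $\omega_{i}$ s'obtient comme changement de base de $u : T' \to T$ : $\omega_{2}$ par la projection $T\times_{S}T' \to T$ sur le premier facteur, et $\omega_{1}$ par la projection $T\times_{S}T \to T$ sur le second facteur. Comme les propri\'et\'es d'\^etre universellement submersif et d'avoir des fibres g\'eom\'etriquement connexes sont stables par tout changement de base, chaque $\omega_{i}$ sera submersif \`a fibres (g\'eom\'etriquement) connexes. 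Le lemme \ref{l5.2} s'appliquera alors successivement \`a $\omega_{1}$ et \`a $\omega_{2}$, fournissant des \'equivalences de cat\'egories $\omega_{1}^{*}$ et $\omega_{2}^{*}$ entre les cat\'egories $\of$ correspondantes. Par cons\'equent, $\omega^{*} = \omega_{2}^{*} \circ \omega_{1}^{*}$ sera lui-m\^eme une \'equivalence.

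Je conclurai en invoquant la proposition \ref{eq.1} : puisque $u$ est \scd\, et que $\omega^{*}$ est une \'equivalence, le foncteur $\e(u): \e(T/S) \to \e(T'/S)$ sera une \'equivalence de cat\'egories. Aucun obstacle s\'erieux n'est \`a pr\'evoir : toutes les v\'erifications d\'elicates (r\'eflexivit\'e, sym\'etrie et transitivit\'e de la relation d'\'equivalence relev\'ee \`a $T$) sont d\'ej\`a encapsul\'ees dans la d\'emonstration de la proposition \ref{eq.1}. La seule subtilit\'e technique restante est la stabilit\'e par changement de base de la submersivit\'e universelle et de la connexit\'e g\'eom\'etrique des fibres, qui est standard ; c'est cette stabilit\'e qui justifie pr\'ecis\'ement l'introduction de l'hypoth\`ese \og universellement \fg\, dans l'\'enonc\'e.
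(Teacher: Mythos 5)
Votre preuve est correcte et suit exactement la d\'emarche du texte : le papier se contente d'\'ecrire \og application directe de \ref{eq.1}, compte tenu du lemme \ref{l5.2}\fg, et vous explicitez simplement le point laiss\'e implicite, \`a savoir que $u\times u$ se factorise en deux changements de base de $u$, chacun universellement submersif \`a fibres g\'eom\'etriquement connexes, de sorte que $\omega^{*}$ est une \'equivalence par double application de \ref{l5.2}. C'est un compl\'ement utile mais non une approche diff\'erente.
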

\begin{proof}
Application directe de \ref{eq.1}, compte tenu du lemme ci-dessus.
\end{proof}

Rappelons que tout morphisme surjectif et ouvert, ou surjectif et fermé, est submersif.

Un homéomorphisme universel (= entier radiciel et surjectif) est évidemment universellement submersif et ses fibres sont géométriquement connexes; cela montre {\it l'invariance par homéomorphisme schématiquement dominant} du foncteur adjoint $T \mapsto \pi^s(T/S)$ lorsqu'il existe.

\subsection{Cas où $\oo_{T} \to u_{\star}(\oo_{T'})$ est bijectif}

 \begin{lemme} \label{stein} Soit $u: T' \to T$ un morphisme \qcqs de schémas, tel que l'application canonique 
 $$
 \oo_{T} \to u_{\star}(\oo_{T'})
 $$
 soit un isomorphisme. Alors l'image réciproque $Z \mapsto  u^*(Z) = T'\times_{T}Z$ définit une équivalence de catégories
 $$
 u^* : \of(T) \to \of(T').
 $$
 Un quasi-inverse de $u^*$ est le foncteur $\of(T') \to \of(T)$ qui associe à l'ouvert fermé $Z'$ de $T'$ l'image schématique (\ref{im.sch})  $\overline{Z'}$ du morphisme composé $Z' \to T' \to T$. Enfin, pour des ouverts fermés  $Z'_{1}$ et $Z'_{2}$ de $T'$, on a
 $$
 \overline{Z'_{1} \cap Z'_{2}} = \overline{Z'_{1}} \cap \overline{Z'_{2}}.
 $$
 \end{lemme}
 
 \begin{proof} Montrons que les deux applications indiquées sont réciproques l'une de l'autre. Considérons d'abord un ouvert fermé $Z$ de $T$, et l'ouvert fermé $T'\times_{T}Z$  de $T'$ qui lui correspond; pour vérifier l'égalité $\overline{T'\times_{T}Z} = Z$  considérons le carré cartésien suivant
 $$\xymatrix{T' \ar[r]^u & T\\
 T'\times_{T}Z \ar[u] \ar[r]_-{u_{Z}} & Z \ar[u]} 
 $$
 Le morphisme $u_{Z}: T'\times_{T}Z \to Z$ est la restriction de $u$ à l'ouvert $Z$; par suite il induit un isomorphisme
 $$
 \oo_{Z} \; \rt \;  {u_{Z}}_{\star}(\oo_{T'\times_{T}Z}) .
 $$
 Il est alors clair que le morphisme composé $T'\times_{T}Z \to T' \to T$ a  pour image schématique $Z$. 
 
 Considérons maintenant un ouvert fermé $Z'$ de $T'$, et montrons que son image schématique $\overline{Z'}$ est un ouvert fermé de $T$ et que le morphisme   $\theta': Z' \to \overline{Z'}\times_{T}T'$ est un isomorphisme. Notons $Z''$ l'ouvert fermé complémentaire de $Z'$ dans $T'$, et désignons par $u': Z' \to T$ et $u'': Z'' \to T$ les restrictions de $u$ à ces sous-schémas. Les isomorphismes de $\oo_{T}$-algèbres
 $$
 \oo_{T}\, \rt \, u_{\star}(\oo_{T'})\,  \rt\,  u'_{\star}(\oo_{Z'}) \times u''_{\star}(\oo_{Z''}) 
 $$ 
 montrent que le morphisme 
 $$
  j: \overline{Z'} \sqcup \overline{Z''} \, \to \, T
 $$
 est un isomorphisme. Considérons alors le diagramme suivant où le carré est cartésien:
 $$
 \xymatrix{Z' \sqcup Z'' \ar[ddr] \ar[dr]|{\theta'\sqcup \theta''} \ar[drr]^{j'}&&\\
&V \sqcup W \ar[d] \ar[r] _{\bar{j}}& T' \ar[d]^{u}\\
& \overline{Z'} \sqcup \overline{Z''} \ar[r]_{j} &T}
$$
Le morphisme  $\bar{j}$ est un isomorphisme puisque $j$ en est un; $j'$ est aussi un isomorphisme par hypothèse; donc $\theta'\sqcup \theta''$, et à fortiori $\theta'$, sont des isomorphismes.

Compte tenu de ce qui précède, l'égalité $\overline{Z'_{1} \cap Z'_{2}} = \overline{Z'_{1}} \cap \overline{Z'_{2}}$ est évidente par image réciproque de $T$ à $T'$.
 \end{proof}

 \begin{prop} \phantomsection\label{p4.2} Soient $f: T\to S$ et $f': T'\to S$ des morphismes plats et de présentation finie, et   $u: T' \to T$ un $S$-morphisme.
 On suppose que l'application canonique $\oo_{T} \rrr u_{\ast}(\oo_{T'})$ est bijective. Alors le foncteur induit par $u$ (\ref{l3.1})
$$
\e(u): \e(T/S) \to\e(T'/S),  
$$
est une équivalence de catégories.
\end{prop}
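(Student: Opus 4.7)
Le plan est d'appliquer la proposition \ref{eq.1}, dont les hypoth\`eses sont que $u$ soit \scd\, et que le foncteur $\omega^* = (u\times u)^* : \of(T\times_{S}T) \to \of(T'\times_{S}T')$ soit une \'equivalence de cat\'egories. La premi\`ere condition est imm\'ediate : l'isomorphisme $\oo_{T} \iso u_{\star}(\oo_{T'})$ est en particulier injectif, ce qui signifie pr\'ecis\'ement que $u$ est \scd\, (\ref{s.schdom}).

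Pour la seconde condition, on va appliquer le lemme \ref{stein} au morphisme $u\times u$, ce qui requiert de v\'erifier que l'application canonique $\oo_{T\times_{S}T} \to (u\times u)_{\star}(\oo_{T'\times_{S}T'})$ est un isomorphisme. On d\'ecompose pour cela $u\times u$ en
$$T'\times_{S}T' \; \by{u\times 1} \; T\times_{S}T' \; \by{1\times u} \; T\times_{S}T.$$
Le morphisme $u\times 1$ est le changement de base de $u: T'\to T$ le long de la projection ${\rm pr}_{1}: T\times_{S}T' \to T$, laquelle est plate, \'etant elle-m\^eme changement de base de $f'$ par $f$; de m\^eme, $1\times u$ est le changement de base de $u$ le long de ${\rm pr}_{2}: T\times_{S}T \to T$, plate car changement de base de $f$ par $f$. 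Comme $u$ est quasi-compact et quasi-s\'epar\'e, \'etant de pr\'esentation finie (\ref{not}), le lemme \ref{pr1.1} appliqu\'e successivement transporte l'isomorphisme $\oo_{T}\iso u_{\star}(\oo_{T'})$ en $\oo_{T\times_{S}T'} \iso (u\times 1)_{\star}(\oo_{T'\times_{S}T'})$ puis, en appliquant $(1\times u)_{\star}$, en $\oo_{T\times_{S}T} \iso (u\times u)_{\star}(\oo_{T'\times_{S}T'})$.

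Le lemme \ref{stein}, appliqu\'e \`a $u\times u$ (lui-m\^eme quasi-compact et quasi-s\'epar\'e, comme compos\'e de changements de base de $u$), entra\^ine alors que $\omega^*$ est une \'equivalence de cat\'egories, et la proposition \ref{eq.1} permet de conclure que $\e(u)$ est une \'equivalence.

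L'unique point un peu technique est la transmission de l'hypoth\`ese $\oo_{T}\iso u_{\star}(\oo_{T'})$ au carr\'e $u\times u$; mais elle se ram\`ene, gr\^ace \`a la d\'ecomposition ci-dessus, au changement de base plat standard (\ref{pr1.1}). Le reste n'est que l'assemblage des outils d\'ej\`a mis en place. On notera d'ailleurs que la m\^eme strat\'egie, avec le lemme \ref{l5.2} en lieu et place de \ref{stein}, redonne aussi la proposition \ref{inv.top}.
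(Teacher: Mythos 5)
Votre démonstration est correcte et suit essentiellement la même voie que celle du texte, qui se borne à invoquer \ref{eq.1} et \ref{stein} ; vous explicitez simplement le point laissé implicite, à savoir que l'hypothèse $\oo_{T}\iso u_{\star}(\oo_{T'})$ se transmet à $u\times u$ par changement de base plat (\ref{pr1.1}), exactement comme le fait le texte dans la preuve de \ref{homeo}. Rien à redire.
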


\begin{proof} C'est une conséquence immédiate de \ref{eq.1} et du lemme précédent (\ref{stein}).
\end{proof}
 
 \begin{rque}
 On peut rapprocher ce résultat de la factorisation de Stein rappelée en (\ref{ex.pro}): en effet, soit $f: T \to S$ un morphisme propre et lisse, et soit 
 $$
 T \by{h}  \Spec_{S}(f_{\star}(\oo_{T})) \by g S 
$$
sa factorisation de Stein; alors $g$ est étale fini, donc séparé, et en posant $E =  \Spec_{S}(f_{\star}(\oo_{T}))$, le morphisme $h$ est surjectif et  $\oo_{E} \to h_{\star}(\oo_{T})$ est bijectif. La proposition qui précède redonne l'isomorphisme 
$$
\pi^s(h): \pi^s(T/S)\,  \rt \, \pi^s(E/S) = \Spec_{S}(f_{\star}(\oo_{T})).
$$
\end{rque}
\section{Cas d'une base normale. Prolongements}

Soit $T \to S$ un morphisme lisse de présentation finie, où $S$ est un schéma normal dont les composantes irréductibles sont en nombre fini, de sorte que l'adjoint existe. On va montrer les deux propriétés de prolongement suivantes.

1). Soit $u: T' \to T$ une immersion ouverte schématiquement dense. Alors le morphisme $\pi^s(u): \pi^s(T'/S) \to \pi^s(T/S)$ est un isomorphisme.

2). On suppose de plus que $S$ est intègre, de point générique $\xi$. Alors, une  factorisation \og générique\fg\,  $T_{\xi} \to E_{0} \to \xi$ se prolonge de fa\c con unique en une factorisation $T \to E \to S$ (\ref{fact}), dont la fibre en $\xi$ est isomorphe à la factorisation générique donnée. 

Dans le langage de \ref{eq.0}, le morphisme  \og réduction à la fibre générique \fg\, $T_{\xi} \to T$ établit une équivalence de catégories de factorisations
$$
\e(T/S) \to \e(T_{\xi}/S) .
$$

Remarquons que la démonstration de \ref{p2} fournit un tel prolongement (pour une base de Dedekind), mais  par des méthodes qui semblent trop liées à la dimension 1 pour  pouvoir être généralisées telles quelles en plus grande dimension.

\subsection{}\label{s6}
Rappelons qu'un schéma $X$ est dit {\it normal} si tous ses anneaux locaux $\oo_{X, x}$ sont intègres et intégralement clos (\cite[5.13.5]{EGAIV2}). 

Dans la suite, on a besoin d'une propriété du  morphisme $f : T \to S$ qui assure qu'une hypothèse de  normalité sur $S$ se propage à $T$ et à $T\times_{S}T$. C'est le cas si $f$ est lisse de présentation finie, ou plus généralement si $f$ est plat à fibres géométriquement normales \cite[11.3.13, (ii)]{EGAIV3}. Nous nous limiterons aux morphismes lisses et donc à l'adjoint à gauche du foncteur d'inclusion
$$
\iota_{S} :\et_{S} \to \sm_{S},
$$ 
 où $\sm_{S}$ désigne la catégorie des schémas lisses de présentation finie sur $S$.
 
 La normalité elle-m\^{e}me n'intervient dans la suite que par la propriété topologique suivante.

\begin{lemme}\phantomsection\label{l7} Soit $X$ un schéma tel que le spectre $\s(\oo_{X,x})$ de chacun de ses anneaux locaux soit intègre (c'est le cas si $X$ est normal). On suppose que l'ensemble des composantes irréductibles de $X$ est fini. Alors :
\begin{enumerate}
\item Les composantes irréductibles sont ouvertes, et en particulier l'adhérence schématique d'un ouvert de $X$ est une partie ouverte et  fermée de $X$;
\item soient $U$ et $V$ deux ouverts de $X$; notons $\overline{U}$, $\overline{V}$  et $\overline{U\cap V}$ les adhérences schématiques. Alors on a
$$
\overline{U\cap V} = \overline{U} \cap \overline{V};
$$
\item si $U \subset X$ un ouvert schématiquement dense, alors l'application $Z \mapsto Z\cap U$ définit une équivalence de catégories.
$$
\of(X) \rt \of(U).
$$
\end{enumerate}
\end{lemme}

\begin{proof} 1) Les composantes irréductibles de $X$ sont deux à deux disjointes, et sont en nombre fini ; elles sont donc ouvertes. 

2) Un ouvert contenant un point $x$ contient un ouvert irréductible contenant $x$, à savoir l'intersection de cet ouvert avec l'unique composante irréductible de $X$ contenant $x$; de plus, un ouvert irréductible contenant un point $x \in \overline{U} \cap \overline{V}$ rencontre $U$ et $V$, donc aussi leur intersection puisqu'il est irréductible; on a donc $x \in \overline{U\cap V}$. L'inclusion dans l'autre sens est claire.

3) D'après $1)$, l'adhérence (dans $X$), $V \mapsto \overline{V}$ définit une application $\of(U) \to \of(X)$; elle est réciproque de l'application de l'énoncé. En effet, si $V \in \of(U)$, alors $V = U \cap \overline{V}$ puisque $V$ est un fermé de $U$; de plus, l'application $\of(X) \to \of(X)$ définie par 
$Z \mapsto Z\cap U \mapsto \overline{Z\cap U}  = \overline{Z} \cap \overline{U}$, est l'application identique  puisque $Z$ est férmé dans $X$ et que $\overline{U} = X$ par hypothèse.\end{proof}

\begin{prop}\label{pur} Soient $S$ un schéma normal dont les composantes irréductibles sont en nombre fini, et $f:T \to S$ un morphisme lisse de présentation finie. Soit $u: T' \to T$ une immersion ouverte \scd e. Alors le foncteur
$$
\e(u) : \e(T/S) \to \e(T'/S)
$$
est une équivalence de catégories.
\end{prop}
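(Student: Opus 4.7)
The plan is to deduce the statement directly from Proposition \ref{eq.1}, which reduces the equivalence of categories of factorizations to a purely geometric statement about the ordered set $\of$ of open-closed subschemes of $T \times_{S} T$. Since $u$ is by hypothesis a schematically dense open immersion, it is schematically dominant, so the only nontrivial hypothesis of \ref{eq.1} to verify is that the pullback
$$
\omega^{*} = (u \times u)^{*} : \of(T \times_{S} T) \; \to \; \of(T' \times_{S} T')
$$
is an equivalence of categories. The natural tool for this is Lemma \ref{l7}(3), which applies to a scheme whose local rings are integral, whose set of irreducible components is finite, and which contains the given open subscheme as a schematically dense subscheme.

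First I would check that $T \times_{S} T$ meets the first two hypotheses of Lemma \ref{l7}. Since $S$ is normal and $f$ is smooth, $T$ is normal (EGA IV 11.3.13 applied to the morphism $f$, whose fibres are geometrically regular, hence geometrically normal); applying the same result to the smooth base change $T \times_{S} T \to T$ shows that $T \times_{S} T$ is normal too. Concerning components, Lemma \ref{l1.3}(iii) applied successively to the flat, finitely presented morphisms $T \to S$ and then $T \times_{S} T \to T$ shows that $T \times_{S} T$ has only finitely many irreducible components.

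Next I would check schematic density of $u \times u$. Factor the morphism as
$$
T' \times_{S} T' \; \by{u \times \mathrm{id}} \; T \times_{S} T' \; \by{\mathrm{id} \times u} \; T \times_{S} T .
$$
Each arrow is an open immersion, obtained from $u$ by the flat base change $T' \to S$ (respectively $T \to S$). By Lemma \ref{pr1.1}, schematic dominance is preserved by flat base change, so each factor is schematically dense, and hence so is their composite $u \times u$. Lemma \ref{l7}(3) then gives that $\omega^{*}$ is an equivalence of categories.

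At this point the two hypotheses of Proposition \ref{eq.1} are satisfied, so $\e(u) : \e(T/S) \to \e(T'/S)$ is an equivalence of categories, as required. The only substantive ingredient is the propagation of normality of $S$ to $T \times_{S} T$ through the smooth morphism $f$; the rest is a formal application of the preparatory material, notably the purely topological Lemma \ref{l7}(3) and the criterion \ref{eq.1}. I expect no genuine obstacle beyond correctly invoking normality in the product $T \times_{S} T$ (which is why the hypothesis of smoothness, rather than just flat finite presentation, is needed).
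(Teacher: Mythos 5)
Your proof is correct and follows essentially the same route as the paper: verify the hypotheses of Proposition \ref{eq.1} by noting that $T\times_{S}T$ is normal with finitely many irreducible components and that $T'\times_{S}T'$ is a schematically dense open, then apply Lemma \ref{l7}(3). The only difference is that you spell out the schematic density of $u\times u$ via the two-step factorization through $T\times_{S}T'$, where the paper simply cites flatness of $f$ and \ref{s.schdom}; this is a welcome clarification but not a different argument.
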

\begin{proof}
 Les hypothèses de \ref{eq.1} sont vérifiées: puisque $S$ est normal et que $f$ est lisse le schéma $T\times_{S}T$ est normal; ses composantes irréductibles sont en nombre fini d'après \ref{l1.3}, $iii$); enfin l'ouvert $T'\times_{S}T'  \subset T\times_{S}T$ est schématiquement dense en raison de la platitude de $f$ (\ref{s.schdom}). Donc, d'après \ref{l7}, 3), l'image réciproque par $u\times u$ induit une bijection 
 $$
 \of(T\times_{S}T) \; \rt \; \of(T'\times_{S}T') ,
$$
qui permet de conclure.
\end{proof}

\begin{thm}\phantomsection\label{p11} Soient $S$ un schéma intègre et normal, et $\xi$ son point générique. Soit $f : T \rrr S$  un morphisme lisse de présentation finie. Alors 
pour tout ouvert non vide $U$ de $S$, le morphisme canonique
$$
 \pi^s(U\times_{S}T/U) \to U\times_{S}\pi^s(T/S) 
$$
est un isomorphisme; autrement dit, le foncteur $T \longmapsto \pi^s(T/S)$, adjoint à gauche de  l'inclusion de catégories 
$$
\et_{S} \, \rrr \, \sm_{S}
$$
commute à la restriction aux ouverts de $S$.

De plus, on peut passer à la limite: la restriction à la fibre générique définit une équivalence de catégories
$$
\e(T/S)\;  \rt \; \e(T_{\xi}/ \xi),
$$
et on a un isomorphisme
$$
\pi^s(T_{\xi} / \xi)  \; \iso \; \pi^s(T/S)_{\xi},
$$
\end{thm}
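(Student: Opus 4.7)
The plan is to first prove that $\pi^s$ commutes with restriction to a non-empty quasi-compact open $U \subset S$ (the case of a general open then follows by a local-on-$U$ argument), and then to deduce the generic-fiber statements by a limit argument. Fix such a $U$ and set $T_U = f^{-1}(U) \subset T$. Since $S$ is integral, the open immersion $U \hookrightarrow S$ is \scd; by the flatness of $f$ (\ref{s.schdom}), the open immersion $j_T : T_U \hookrightarrow T$ is also \scd. As $S$ is normal and $f$ smooth of finite presentation, Proposition~\ref{pur} applies to $j_T$ and yields an equivalence of categories $\e(j_T) : \e(T/S) \rt \e(T_U/S)$.

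From the explicit description of $\e(j_T)$ in \ref{l3.1}, the initial object $T \to \pi^s(T/S) \to S$ of $\e(T/S)$ is sent to $T_U \to h_T(T_U) \to S$; the surjectivity of $h_T$ (\ref{sur}, $iii$) gives $h_T(T_U) = g^{-1}(U) = U \times_S \pi^s(T/S)$, an open subscheme of $\pi^s(T/S)$. It remains to check that the induced morphism $T_U \to U \times_S \pi^s(T/S)$, viewed as a $U$-morphism, satisfies the universal property of $\pi^s(T_U/U)$. The target lies in $\et_U$: étale and separated by base change, and quasi-compact because $\pi^s(T/S)$ is quasi-compact over $S$ and $U$ is quasi-compact over $S$. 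Given any $U$-morphism $T_U \to E$ with $E \in \et_U$, the composition $E \to U \to S$ lies in $\et_S$ (quasi-compact because $U \hookrightarrow S$ is); so the universal property of $\pi^s(T_U/S) \simeq U \times_S \pi^s(T/S)$ provides a unique $S$-morphism $U \times_S \pi^s(T/S) \to E$, which is automatically a $U$-morphism because $E \to S$ factors uniquely through $U \hookrightarrow S$. Hence $\pi^s(T_U/U) \iso U \times_S \pi^s(T/S)$, proving part 1.

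For part 2 we pass to the limit. As $U$ ranges over the filtered collection of non-empty quasi-compact opens of $S$, one has $\xi = \varprojlim U$ and $T_\xi = \varprojlim T_U$ in the category of morphisms of finite presentation with flat affine transitions. Standard limit results \cite[\S 8]{EGAIV3} identify $\e(T_\xi/\xi)$ with the filtered $2$-colimit of the categories $\e(T_U/U)$: every factorization of $T_\xi$ spreads out over some $U$ small enough, and similarly for morphisms between factorizations. Combining with the equivalences $\e(T/S) \rt \e(T_U/U)$ from part 1 yields the desired equivalence $\e(T/S) \rt \e(T_\xi/\xi)$. Taking initial objects, and applying part 1 once more in the limit, gives $\pi^s(T_\xi/\xi) \iso \xi \times_S \pi^s(T/S) = \pi^s(T/S)_\xi$.

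The chief obstacle is Proposition~\ref{pur} itself, which already encapsulates the nontrivial geometric content—that open-and-closed subschemes of $T \times_S T$ are detected on a scheme-theoretically dense open, a fact ultimately resting on normality of $S$ and smoothness of $f$ (so that normality propagates to $T \times_S T$). Once \ref{pur} is in hand, the remaining work—identifying the image of the initial object under $\e(j_T)$, matching universal properties between $\et_S$ and $\et_U$ via quasi-compactness of $U \hookrightarrow S$, and passing to the filtered $2$-colimit—is a routine application of universal property arguments and the standard limit formalism for schemes of finite presentation.
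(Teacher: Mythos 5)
Your proof is correct and follows essentially the same route as the paper: the open-restriction statement via the equivalence $\of(T\times_ST)\rt\of(T_U\times_ST_U)$ on the normal scheme $T\times_ST$ (your appeal to \ref{pur} is exactly the paper's combination of \ref{l7} and \ref{eq.1}), and the generic-fibre statement by spreading out factorisations over small opens. The only cosmetic differences are that you identify $\e(T_U/S)$ with $\e(T_U/U)$ by an explicit universal-property check where the paper simply remarks they coincide, and you organise the limit step as a filtered $2$-colimit with invertible transition functors where the paper instead uses the finiteness of $\overline{\e}(T_U)$ to produce a single open $U$ with $j_U^*$ already an equivalence.
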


\begin{proof} Soit $U$ un ouvert non vide de $S$; pour tout $S$-schéma $X$, on note $X_{U}$ l'image réciproque de $U$ dans $X$. L'image du morphisme composé $U\times_{S}T = T_{U} \to U \to S$ est contenue dans $U$, de sorte que toute factorisation, au sens de \ref{fact}, de ce morphisme $T_{U} \to S$ est une factorisation de $T_{U} \to U$. On voit de même que $\e(T_{\xi}/S) = \e(T_{\xi}/\xi)$. Dans la démonstration qui suit on écrira donc  $\e(T_{U})$  et $\e(T_{\xi})$ à la place de $\e(T_{U}/S)$  et $\e(T_{\xi}/S)$.

Notons $u: T_{U} \to T$ la projection. Le foncteur induit par $u$ (\ref{l3.1})
$$
\e(u): \e(T) \to \e(T_{U}) 
$$
est donné par la restriction au-dessus de $U$, $E \mapsto U\times_{S}E = E_{U}$, puisque dans le carré cartésien obtenu par restriction
$$
\xymatrix{T_{U} \ar[d]_{1\times h} \ar[r]^u & T \ar[d]^h\\
E_{U} \ar[r]_{u'} & E}
$$
le morphisme $1\times h$ est surjectif, tout comme $h$, et que $u'$ est une immersion ouverte.

L'ouvert $T_{U}$ est dense dans $T$ car $\xi \in U$, et parce que $T$ est plat sur $S$(\ref{pr1.1}). Puisque $f$ est lisse, le schéma $T\times_{S}T$ est normal et on peut utiliser le lemme (\ref{l7}, 3)): la restriction à $T_{U}$ induit une équivalence 
$$
\of(T\times_{S}T) \, \rt \, \of(T_{U}\times_{S}T_{U}) .
$$
La proposition \ref{eq.1} entraîne alors que le foncteur  \og restriction au dessus de $U$ \fg\, $\e(T) \to \e(T_{U})$ est une équivalence de catégories. Pour préciser le morphisme $\pi^s(u)$, reprenons le diagramme de \ref{l3.1}:
 $$
 \xymatrix{
 T_{U} \ar@/_2pc/[dd]_{1\times h_{T}=\e(u)(h_{T})} \ar[d]^{h_{T_{U}}} \ar[r]^u & T \ar[d]^{h_{T}}\\
 \pi^s(T_{U}) \ar[d]^v \ar[r]^{\pi^s(u)} & \pi^s(T) \ar@{=}[d]\\
  \pi^s(T)_{U} \ar[r]_{u'} & \pi^s(T)}
 $$
Puisque le foncteur $\e(u)$ est une équivalence de catégories il échange les éléments initiaux; le morphisme $v$ est donc un isomorphisme de $U$-schémas étales 
$$
\pi^s(T_{U})\;  \rt \; \pi^s(T)_{U}
$$
et $\pi^s(u)$ l'immersion ouverte $\pi^s(T_{U}) \rt \pi^s(T)_{U} \to \pi^s(T)$. 
\bb

Montrons maintenant qu'il existe un ouvert non vide $U \subset S$ tel que le changement de base associé au morphisme $j_{U}: \xi \to U$  induise une équivalence de catégories $\e(T_{U}) \to \e(T_{\xi})$. Comme, en général,  le morphisme $T_{\xi} \to S$ n'est pas de présentation finie, ni ouvert, on ne peut pas utiliser la définition \ref{l3.1} pour relier les catégories de factorisation $\e(T_{\xi})$  et $\e(T_{U})$; c'est pourquoi on passe ici par le changement de base $j_{U}^*$.

 Pour tout ouvert non vide $U$, donc contenant $\xi$, le foncteur $j_{U}^*: \e(T_{U}) \to \e(T_{\xi})$ est pleinement fidèle: en effet, pour $T = T_{U}$ ou $T = T_{\xi}$, les foncteurs $\e(T) \to \of(T\times_{S}T)$ sont pleinement fidèles (\S5); de plus, le foncteur
  $$
  (j_{U}\times j_{U})^*: \of(T_{U}\times_{S}T_{U}) \to \of(T_{\xi}\times_{S}T_{\xi})
  $$ est pleinement fidèle puisque $T_{\xi}\times_{S}T_{\xi}$  contient les points maximaux de $T_{U}\times_{S}T_{U}$ (\ref{l1.3.1}).
 
Considérons alors un objet de $\e(T_{\xi})$, c'est-à-dire une factorisation \og générique\fg, 
$$
T_{\xi} \to E_{0} \to \xi .
$$
Par passage à la limite (\cite[8.8.2]{EGAIV3}), il existe un ouvert $U$ de $S$ pour lequel il existe un diagramme $T_{U}\to F \to U$ prolongeant le diagramme générique, et on peut se ramener au cas où ce diagramme constitue une factorisation de $T_{U}\to U$(\cite[17.7.8,$ii)$]{EGAIV4}). Notant $\mathcal{U}$ l'ensemble des ouverts non vides de $S$, et $ \overline{\e}$ l'ensemble des classes d'isomorphie d'objets de $\e$, la limite inductive des changements de base $\xi \to U$ donne donc une bijection
$$
\underrightarrow{\rm lim}_{\, \mathcal{U}\, } \overline{\e}(T_{U}) \to \overline{\e}(T_{\xi}).
$$
Mais ces ensembles  $\overline{\e}$ sont finis; il existe donc un ouvert $U$ de $S$ tel que le foncteur
$$
j_{U}^*: \e(T_{U}) \to \e(T_{\xi})
$$
soit une équivalence de catégories; il échange donc les éléments initiaux de ces catégories, d'où l'on tire un isomorphisme
$$
\pi^s(T_{\xi}) \; \rt \; \pi^s(T_{U})_{\xi}.
$$
On a montré plus haut que le morphisme canonique $\pi^s(T_{U}) \to \pi^s(T)_{U}$ est un isomorphisme; comme $j_{U}^*(\pi^s(T)_{U}) = \pi^s(T)_{\xi}$, on obtient, par composition, l'isomorphisme annoncé
$$
\pi^s(T_{\xi}/\xi) \; \rt \; \pi^s(T/S)_{\xi}.
$$

\end{proof}

\begin{cor}[{\rm Compatibilité au produit}] \label{prd} Soit $S$ un sch\'ema normal  int\`egre de point g\'en\'erique $\xi$. Soient $f : T \rrr S$  et $f' : T' \rrr S$ deux morphismes  lisses et de pr\'esentation finie. Alors le morphisme canonique 
$$
\mu: \pi^s(T\times_{S}T' /S) \rrr  \pi^s(T/S)\times_{S}\pi^s(T'/S)
$$ 
est un isomorphisme.
\end{cor}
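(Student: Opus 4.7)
Le plan est de ramener l'assertion � la Proposition \ref{p1}$(iv)$, en utilisant le Th�or�me \ref{p11} qui assure, sur une base normale int�gre, la compatibilit� de la formation de $\pi^s$ avec la restriction � la fibre g�n�rique, pour les morphismes lisses. Je commencerais par v�rifier que $Y := \pi^s(T/S)\times_{S}\pi^s(T'/S)$ est un objet de $\et_{S}$ : le produit fibr� de deux $S$-sch�mas �tales quasi-compacts et s�par�s est �tale quasi-compact et s�par� sur $S$. Le morphisme canonique $h_{T}\times h_{T'} : T\times_{S}T' \to Y$ est fid�lement plat (produit de deux morphismes fid�lement plats, donc surjectif) ; comme $Y/S$ est �tale quasi-compact et s�par�, la donn�e $T\times_{S}T' \to Y \to S$ est donc un objet de la cat�gorie $\e(T\times_{S}T'/S)$ au sens de la D�finition \ref{fact}. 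Le morphisme $\mu$ de l'�nonc� est alors le morphisme canonique depuis l'objet initial $T\times_{S}T' \to \pi^s(T\times_{S}T'/S)$ vers cette factorisation ; montrer que $\mu$ est un isomorphisme revient exactement � montrer que la factorisation $T\times_{S}T' \to Y$ est elle-m�me initiale dans $\e(T\times_{S}T'/S)$.

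Le sch�ma $T\times_{S}T'$ �tant lisse de pr�sentation finie sur $S$ (compos� et changement de base de morphismes lisses), on peut appliquer le Th�or�me \ref{p11}, qui fournit une �quivalence de cat�gories
$$
\e(T\times_{S}T'/S) \, \rt \, \e(T_{\xi}\times_{\xi}T'_{\xi}/\xi)
$$
donn�e par la restriction � la fibre g�n�rique. Comme une �quivalence de cat�gories pr�serve les objets initiaux, il suffit donc de v�rifier que l'image par cette �quivalence de la factorisation $T\times_{S}T'\to Y$ est l'objet initial de $\e(T_{\xi}\times_{\xi}T'_{\xi}/\xi)$. Or cette image est la factorisation g�n�rique $T_{\xi}\times_{\xi}T'_{\xi} \to Y_{\xi}$. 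En appliquant le Th�or�me \ref{p11} s�par�ment aux deux $S$-sch�mas lisses $T$ et $T'$, on dispose d'isomorphismes canoniques $\pi^s(T/S)_{\xi} \simeq \pi^s(T_{\xi}/\xi)$ et $\pi^s(T'/S)_{\xi} \simeq \pi^s(T'_{\xi}/\xi)$ ; la compatibilit� du produit fibr� au changement de base donne alors $Y_{\xi} \simeq \pi^s(T_{\xi}/\xi)\times_{\xi}\pi^s(T'_{\xi}/\xi)$, et la factorisation g�n�rique s'identifie au morphisme canonique
$$
T_{\xi}\times_{\xi}T'_{\xi} \, \rrr \, \pi^s(T_{\xi}/\xi)\times_{\xi}\pi^s(T'_{\xi}/\xi).
$$

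Compte tenu de l'isomorphisme $\pi^s(-/\xi) \simeq \pi_{0}(-/\xi)$ sur le corps $\kappa(\xi)$ rappel� en \ref{ex.corps}, la Proposition \ref{p1}$(iv)$ affirme exactement que ce dernier morphisme est un isomorphisme, et qu'il repr�sente l'objet initial de $\e(T_{\xi}\times_{\xi}T'_{\xi}/\xi)$. La conclusion s'ensuit. Le seul point � surveiller dans cette strat�gie est de nature comptable : il s'agit de v�rifier soigneusement la naturalit� des identifications de la fibre g�n�rique de chaque $\pi^s$, afin de garantir que l'image de la factorisation $T\times_{S}T'\to Y$ par l'�quivalence du Th�or�me \ref{p11} co�ncide bien avec le morphisme universel de la Proposition \ref{p1}$(iv)$ ; une fois cette naturalit� mise en place, tout le reste est formel.
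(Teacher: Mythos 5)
Votre d\'emonstration est correcte et repose sur le m\^eme squelette que celle du texte : r\'eduction \`a la fibre g\'en\'erique gr\^ace au th\'eor\`eme \ref{p11}, identification $\pi^s \simeq \pi_0$ sur le corps $\kappa(\xi)$ (\ref{ex.corps}), puis appel \`a la proposition \ref{p1} (iv). La diff\'erence porte sur le m\'ecanisme qui permet de remonter de la fibre g\'en\'erique \`a $S$ : le texte observe que $\mu$ est un morphisme \emph{surjectif} entre objets de $\et_{S}$ (surjectivit\'e de $h_{T}$ et $h_{T'}$ via \ref{sur}) et invoque le lemme \ref{l1.1} de prolongement des isomorphismes le long d'un morphisme sch\'ematiquement dominant ; vous, au contraire, interpr\'etez $\mu$ comme le morphisme canonique de l'objet initial vers la factorisation candidate $T\times_{S}T' \to \pi^s(T/S)\times_{S}\pi^s(T'/S)$ et utilisez le fait que l'\'equivalence $\e(T\times_{S}T'/S) \iso \e(T_{\xi}\times_{\xi}T'_{\xi}/\xi)$ de \ref{p11} pr\'eserve les objets initiaux. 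Votre variante \'evite le recours \`a \ref{l1.1} (et donc la v\'erification s\'epar\'ee de la surjectivit\'e de $\mu$), au prix du point de naturalit\'e que vous signalez vous-m\^eme, \`a savoir la compatibilit\'e des isomorphismes $\pi^s(T_{\xi}/\xi)\simeq \pi^s(T/S)_{\xi}$ avec les morphismes universels ; ce point est effectivement garanti par la construction m\^eme de l'\'equivalence dans la d\'emonstration de \ref{p11}, qui envoie la factorisation $T\to E\to S$ sur sa restriction $T_{\xi}\to E_{\xi}\to \xi$. Les deux arguments sont donc essentiellement de m\^eme co\^ut, le v\^otre \'etant un peu plus cat\'egorique et celui du texte un peu plus direct.
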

\begin{proof}
Le morphisme $\mu$ relie des $S$-schémas étales quasi-compacts et séparés;  il est surjectif puisque les morphismes $h_{T}$  et $h_{T'}$ le sont (\ref{sur}), et par suite aussi le morphisme composé 
$$
T\times_{S}T' \to \pi^s(T\times_{S}T' / S)\to \pi^s(T/S)\times\pi^s(T'/S).
$$ 
Il suffit donc (\ref{l1.1}) de voir que son morphisme générique 
$$
\pi^s(T\times_{S}T' / S)_{\xi} \, \rrr \, \pi^s(T/S)_{\xi}\times_{\xi}\pi^s(T'/S)_{\xi}
$$ 
est un isomorphisme. Suivant le théorème \ref{p11}, et \ref{ex.corps}, cela se réécrit
$$
\pi_{0}(T_{\xi}\times_{\xi}T'_{\xi} / \xi) \, \rrr \, \pi_{0}(T_{\xi}/\xi)\times_{\xi}\pi_{0}(T'_{\xi}/\xi) .
$$ 
 L'énoncé sur $S$ découle donc de la compatibilité au produit lorsque la base est un corps (\ref{p1} (iv)).
\end{proof}

\section{Changements  de base}
\subsection{\label{7.1}}
Soit $\phi : \wt{S} \to S$ un morphisme entre des schémas dont les composantes irréductibles sont en nombre fini, et soit $T\to S$ un morphisme plat et de présentation finie. D'après la propriété universelle de $\pi^s(\wt{S}\times_{S}T/\wt{S})$, on a  un morphisme de $\wt{S}$-schémas
$$
\pi^s(\wt{S}\times_{S}T/\wt{S}) \to \wt{S} \times_{S}\pi^s(T/S).
$$
On va dégager des conditions pour que ce morphisme soit un isomorphisme. Le paragraphe précédent montre que c'est le cas lorsque $S$ est un schéma normal intègre de point générique $\xi$, que $\phi$ est le morphisme $\xi \to S$ et enfin que le morphisme $T \to S$ est lisse de présentation finie.

En ($7.2$), les hypothèses portent sur les images réciproques par $\phi$ (supposé surjectif) des ouverts fermés du schéma $T\times_{S}T$, comme en \ref{eq.1}; mais ici les vérifications sont plus élémentaires. Ces hypothèses impliquent que l'image réciproque par $\phi$ induit une équivalence de catégories $\e(T/S) \by{\sim} \e(\wt{S}\times_{S}T/\wt{S})$ entre les catégories de factorisation, et que, par suite,  ce foncteur donne, pour leurs éléments initiaux $\pi^s$, l'isomorphisme cherché.

En ($7.3$), les schémas de base sont supposés normaux et, d'après \ref{p11}, la formation des $\pi^s$ commute à la localisation; cela permet de se ramener aux situations génériques et d'utiliser les résultats analogues sur des corps de base. 

On considère enfin le cas où $S$ est géométriquement unibranche; le morphisme de normalisation $\phi: \wt{S} \to S$ est alors un homéomorphisme universel, et, comme tel,  il induit un isomorphisme $\pi^s(\wt{T}/\wt{S}) \by{\sim} \wt{S}\times_{S}\pi^s(T/S)$; cela permet d'étendre, pour un morphisme lisse $T \to S$, des propriétés vraies sur une base normale. \\

Dans la suite, pour un $S$-schéma $X$, on notera $\wt{X} = \wt{S}\times_{S}X$ le schéma obtenu par le changement de base $\wt{S} \to S$, et même notation pour les morphismes.\\

\subsection{}

On utilisera les lemmes suivants.
 
 \begin{lemme}\label{l7.2.1} Soit $\phi : \tilde{S} \to S$ un morphisme surjectif  de schémas. Considérons deux morphismes de $S$-schémas $ X \stackrel{u}{\rrr} Y \stackrel{j}{\longleftarrow} U$, o\`u $j$ est une immersion ouverte. Si un morphisme $v' : \wt{X} \to \wt{U}$ est tel que $\wt{u} = \wt{j}v'$, alors il provient par changement de base d'un morphisme $v : X \to U$ tel que $u = vj$.
 \end{lemme}
 \begin{proof}
 Considérons le carré cartésien suivant:
 $$
 \xymatrix{X \ar[r]^u &Y\\
 X_{U}\ar[u]^{j_{X}} \ar[r] & U \ar[u]_{j}}
 $$
 Dans l'image réciproque de ce carré par $\phi: \wt{S} \to S$, l'application $\wt{j_{X}}$ a une section; elle est donc surjective; comme le morphisme $\phi$ est surjectif par hypothèse, $j_{X}$ est surjective; comme c'est aussi une immersion ouverte, c'est un isomorphisme.
 \end{proof}
 
 \begin{lemme}\label{l7.2.2} Soit $\phi : \tilde{S} \to S$ un morphisme surjectif  de schémas. Soit $T \to S$ un morphisme de schémas, et $R \subset T\times_{S}T$ un sous-schéma ouvert. Si $\wt{R}$ est le graphe d'une relation d'équivalence dans $\wt{T}$, alors $R$ est le graphe d'une relation d'équivalence dans  $T$.
 \end{lemme}
 \begin{proof}
 Il s'agit de vérifier les conditions (\ref{par1}), (\ref{par2}) et (\ref{par3}) de l'appendice qui expriment respectivement la réflexivité, la symétrie et la transitivité.\\

\itemize
\item (Réflexivité) L'existence du morphisme $s : T \to R$ qui factorise le morphisme diagonal (\ref{par1}), provient de l'application du lemme \ref{l7.2.1} au diagramme $T \stackrel{\Delta}{\rrr} T\times_{S}T \stackrel{d}{\longleftarrow} R$, puisque $d$ est une immersion ouverte.

 \item (Symétrie) Considérons l'automorphisme $\sigma$ de $T^2$ qui permute  les facteurs. L'inclusion $R \cap \sigma R \subset R$ d'ouverts fermés de $T^2$ devient une égalité par image réciproque par le morphisme $\phi$ qui est surjectif. C'est donc une égalité. 
 
 \item (Transitivité) Il s'agit de vérifier l'inclusion suivante d'ouverts du produit triple $T^3$:
 $$
 {p'_{0}}^{-1}(R)  \cap  {p'_{2}}^{-1}(R)  \subset {p'_{1}}^{-1}(R).
 $$
 On applique encore le lemme \ref{l7.2.1}, mais cette fois au diagramme 
 $$
 {p'_{0}}^{-1}(R)  \cap  {p'_{2}}^{-1}(R)  \to T^3 \stackrel{j}{\longleftarrow} {p'_{1}}^{-1}(R)
 $$ 
 où $j$ est l'image réciproque par $p'_{1}$ de l'immersion ouverte $R \subset T^2$.
 
\end{proof}

 \begin{prop}\label{ch.ba} Soit $\phi : \wt{S} \to S$ un morphisme surjectif  de schémas. On suppose que pour tout changement de base plat $S' \to S$,  l'application induite par $\phi^*$, $\of(S') \to \of(\wt{S}\times_{S}S')$ est bijective. Alors, pour un morphisme $T\to S$ plat et de présentation finie, le foncteur de changement de base $ E \mapsto \wt{S}\times_{S}E= \wt{E}$ induit une équivalence entre les catégories de factorisation
$$
\e(T/S) \to \e(\wt{T} / \wt{S}).
$$
\end{prop}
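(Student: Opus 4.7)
Le plan est de se ramener, via le théorème \ref{p7.1}, à la catégorie des relations d'équivalence à graphe ouvert et fermé, puis d'appliquer l'hypothèse au changement de base plat $T\times_{S}T \to S$.

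Plus précisément, le théorème \ref{p7.1} fournit des équivalences $\cf_{T/S} : \e(T/S) \iso \sfR(T/S)$ et de même $\cf_{\wt{T}/\wt{S}}: \e(\wt{T}/\wt{S}) \iso \sfR(\wt{T}/\wt{S})$. Ces équivalences sont compatibles avec le changement de base par $\phi$: pour une factorisation $T\by{h} E \to S$, l'identification canonique $\wt{S}\times_{S}(T\times_{E}T) = \wt{T}\times_{\wt{E}}\wt{T}$ entraîne la commutativité (à isomorphisme canonique près) du carré
$$\xymatrix{\e(T/S) \ar[d] \ar[r]^-{\cf_{T/S}} & \sfR(T/S) \ar[d] \\ \e(\wt{T}/\wt{S}) \ar[r]_-{\cf_{\wt{T}/\wt{S}}} & \sfR(\wt{T}/\wt{S}).}$$
Il suffira donc de montrer que le foncteur vertical de droite, $R \mapsto \wt{R}$, est une équivalence de catégories.

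Or, rappelé en \ref{eq.0}, le foncteur d'oubli $\sfR(T/S) \to \of(T\times_{S}T)$ est pleinement fidèle, et de même au-dessus de $\wt{S}$. On est donc ramené à deux vérifications: (a) le foncteur de changement de base $\of(T\times_{S}T) \to \of(\wt{T}\times_{\wt{S}}\wt{T})$ est une bijection; (b) cette bijection échange la propriété pour un ouvert fermé d'être le graphe d'une relation d'équivalence.

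Le point (a) résultera de l'hypothèse appliquée au changement de base $S' := T\times_{S}T \to S$, plat puisque $f$ l'est: l'identification canonique $\wt{S}\times_{S}(T\times_{S}T) = \wt{T}\times_{\wt{S}}\wt{T}$ donne précisément la bijection voulue. Pour (b), une implication est immédiate (la formation d'une relation d'équivalence commute au changement de base); l'implication réciproque est exactement le contenu du lemme \ref{l7.2.2}, appliqué à $\phi$ (surjectif) et à l'ouvert $R \subset T\times_{S}T$. L'essentiel du travail technique ayant été effectué dans les lemmes préparatoires \ref{l7.2.1} et \ref{l7.2.2}, la difficulté résiduelle est mince: elle consiste à reconnaître que le bon changement de base auquel appliquer l'hypothèse est $S' = T\times_{S}T$, et à transférer la bijection obtenue sur les ouverts fermés à travers le formalisme de \ref{p7.1}.
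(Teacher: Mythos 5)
Votre preuve est correcte et suit essentiellement la même démarche que celle du texte : on applique l'hypothèse au changement de base plat $S'=T\times_{S}T$ pour obtenir la bijection $\of(T\times_{S}T)\to\of(\wt{T}\times_{\wt{S}}\wt{T})$, on transfère le tout via l'équivalence de \ref{p7.1} et la pleine fidélité du foncteur d'oubli vers $\of$, et le point non trivial restant (descendre la propriété d'être une relation d'équivalence) est précisément le lemme \ref{l7.2.2}. Vous explicitez un peu plus la commutativité du carré de foncteurs que ne le fait le texte, mais c'est la même preuve.
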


\begin{proof}
 
Soit $f: T\to S$ un morphisme plat de présentation finie. Pour montrer que le foncteur de changement de base induit une équivalence 
$$
\e(T/S) \to \e(\wt{T}/\wt{S})
$$
on utilise la bijection
$$
\of(T\times_{S}T) \to \of(\wt{T}\times_{\wt{S}}\wt{T}),
$$
induite par $\phi^*$, et on se ramène à vérifier ceci: soit $R \subset T\times_{S}T$ un sous-schéma ouvert et fermé tel que $\wt{R} \subset \wt{T}\times_{\wt{S}}\wt{T}$ soit le graphe d'une relation d'équivalence sur $\wt{T}$; alors $R$ est le graphe d'une relation d'équivalence sur $T$. Mais c'est exactement l'énoncé \ref{l7.2.2}.
 \end{proof}

 \begin{cor} \label{homeo}Soit $\phi: \wt{S} \to S$ un morphisme surjectif de schémas. On suppose que l'une des propriétés suivantes est vérifiée:
 \begin{itemize}
 \item  (a) $\phi$ est universellement submersif et ses fibres sont géométriquement connexes (c'est le cas si $\phi$ est un homéomorphisme universel);
 \item  (b) le morphisme $\phi$ est quasi-compact et quasi-séparé, et l'application $\oo_{S} \to \phi_{\star}(\oo_{\wt{S}})$ est bijective.
 \end{itemize}
 Alors, pour un morphisme $T\to S$ plat et de présentation finie, le foncteur de changement de base $ E \mapsto \tilde{S}\times_{S}E$ induit une équivalence entre les catégories de factorisation
$$
\e(T/S) \to \e(\tilde{T}/ \tilde{S}).
$$
En particulier, si on suppose de plus que l'ensemble des composantes irréductibles de $S$ est fini, et idem pour  $\wt{S}$, alors  le morphisme canonique 
 $$
 \pi^s(\wt{T}/\wt{S}) \to \wt{S}\times_{S}\pi^s(T/S)
 $$
 est un isomorphisme.
 \end{cor}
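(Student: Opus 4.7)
The plan is to reduce the corollary to Proposition~\ref{ch.ba} by verifying, in each of the two cases, that $\phi$ satisfies the hypothesis stated there: namely that for every flat base change $S' \to S$, the induced map $\of(S') \to \of(\wt{S}\times_S S')$ is bijective. Once this is established, the equivalence $\e(T/S) \to \e(\wt{T}/\wt{S})$ follows from \ref{ch.ba}; and the final assertion about $\pi^s$ follows because an equivalence of categories preserves initial objects, so under the finiteness hypothesis on irreducible components both $\pi^s$ exist (as the initial objects of $\e(T/S)$ and $\e(\wt{T}/\wt{S})$ respectively), and the image of $\pi^s(T/S)$ under the equivalence, which is $\wt{S}\times_S \pi^s(T/S)$, must therefore be canonically isomorphic to $\pi^s(\wt{T}/\wt{S})$.

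For case~(a), fix a flat base change $S' \to S$ and consider $\phi' : \wt{S}\times_S S' \to S'$. Being universally submersive is stable by base change, so $\phi'$ is submersive. Its fibre at a point $s' \in S'$ lying over $s \in S$ is obtained from the fibre $\phi^{-1}(s)$ by extension of scalars to $\kappa(s')$, so it is connected by the geometric connectivity of the fibres of $\phi$. Lemma~\ref{l5.2} then yields an equivalence of categories $\of(S') \iso \of(\wt{S}\times_S S')$, which in particular gives the desired bijection. The parenthetical claim that a universal homeomorphism falls under~(a) follows because such a morphism is universally closed and surjective, hence universally submersive, and its fibres are spectra of purely inseparable residue field extensions, which are geometrically connected.

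For case~(b), the property of being quasi-compact and quasi-separated is stable by base change, so $\phi' : \wt{S}\times_S S' \to S'$ is again qcqs. By the flat base change isomorphism of Lemma~\ref{pr1.1}, the canonical map $\oo_{S'} \to \phi'_\star(\oo_{\wt{S}\times_S S'})$ is obtained by pulling back $\oo_S \to \phi_\star(\oo_{\wt{S}})$ along the flat morphism $S' \to S$, and is therefore bijective. Lemma~\ref{stein} then produces an equivalence $\of(S') \iso \of(\wt{S}\times_S S')$, again giving the required bijection.

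There is no serious obstacle: the entire argument is a bookkeeping matter once Proposition~\ref{ch.ba} is in hand. The only point that requires a moment of care is the stability under \emph{flat} base change of the two hypotheses~(a) and~(b)---stability of universal submersivity and of geometric connectedness of fibres in case~(a), and stability of the formation of $\phi_\star(\oo)$ under flat base change (Lemma~\ref{pr1.1}) in case~(b).
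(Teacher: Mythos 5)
Your proposal is correct and follows essentially the same route as the paper: reduce to Proposition~\ref{ch.ba} via Lemma~\ref{l5.2} in case~(a) and Lemma~\ref{stein} in case~(b), then conclude for $\pi^s$ because the equivalence exchanges initial objects. The only (immaterial) difference is that you verify the hypothesis of \ref{ch.ba} for every flat base change $S'\to S$, whereas the paper checks only the instance actually used in the proof of \ref{ch.ba}, namely the morphism $\phi\times\phi$ over $S'=T\times_S T$.
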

 
 Sous la première hypothèse, cet énoncé est à rapprocher de \cite[IX, 3.4]{SGA1}; la seconde hypothèse est vérifiée  par certains morphismes propres, et en particulier par certains éclatements.
 
 \begin{proof} Les hypothèses $(a)$ ou $(b)$ sont vérifiées par $\phi \times \phi$ puisque $T$ est plat sur $S$; le changement de base par $\phi \times \phi$  donne donc une bijection 
 $$
 \of(T\times_{S}T) \by{\sim} \of(\wt{T}\times_{\wt{S}}\wt{T}), 
 $$
 comme il découle du lemme \ref{l5.2} pour $(a)$, et du lemme \ref{stein} pour $(b)$. La proposition ci-dessus (\ref{ch.ba}) montre alors qu'on a une équivalence de catégories
$$
\e(T/S) \to \e(\wt{T} / \wt{S}).
$$ 
Cette équivalence échange les éléments initiaux, et elle est induite par le changement de base par $\phi$; d'où l'isomorphisme annoncé.
 \end{proof}

 
 \subsection{Bases normales, et  géométriquement unibranches}
 
 \begin{prop}{\label{7.3}} Soit $\phi: \wt{S} \to S$ un morphisme dominant de schémas normaux intègres. Soit $f: T\to S$ un morphisme lisse de présentation finie, et $\wt{f}: \wt{T} \to \wt{S}$ le morphisme obtenu par changement de base. Alors le morphisme canonique 
$$
\pi^s(\wt{T}/\wt{S}) \to \wt{S}\times_{S} \pi^s(T/S)
$$
est un isomorphisme.
\end{prop}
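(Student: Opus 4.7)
La stratégie consiste à se ramener aux fibres génériques via le lemme de prolongement d'isomorphismes \ref{l1.1}, puis à calculer la fibre générique à l'aide du théorème \ref{p11} (commutation de $\PI$ à la restriction à un ouvert, pour une base normale intègre et un morphisme lisse), combiné à la compatibilité aux extensions de corps sur une base corps (\ref{p1}(ii)). Notons $\xi$ et $\wt{\xi}$ les points génériques respectifs de $S$ et $\wt{S}$; puisque $\phi$ est dominant, $\phi(\wt{\xi}) = \xi$, et $\wt{\xi} \to \xi$ est une extension de corps.

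Pour pouvoir invoquer \ref{l1.1}, je vérifierais d'abord que le morphisme canonique
$$
\mu : \PI(\wt{T}/\wt{S}) \rrr \wt{S}\times_S \PI(T/S)
$$
est fidèlement plat, quasi-compact et séparé. Source et but étant étales quasi-compacts et séparés sur $\wt{S}$, $\mu$ est automatiquement étale et séparé; il est quasi-compact puisque la source l'est sur $\wt{S}$ et le but est quasi-séparé sur $\wt{S}$. Sa surjectivité résulte de ce que le composé $\wt{T}\by{h_{\wt{T}}}\PI(\wt{T}/\wt{S})\by{\mu} \wt{S}\times_S \PI(T/S)$ coïncide avec l'image réciproque par $\phi$ du morphisme surjectif $h_T$ (\ref{sur}), et de la surjectivité de $h_{\wt{T}}$. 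Posons $Y = \wt{S}\times_S \PI(T/S)$: comme $Y$ est étale quasi-compact sur le schéma normal intègre $\wt{S}$, $Y$ est réduit, à nombre fini de composantes irréductibles (\ref{l1.3}(iii)), dont les points maximaux $y_\lambda$ se situent tous au-dessus de $\wt{\xi}$. La fibre $Y_{\wt{\xi}}$ s'identifie donc à $\bigsqcup \s(\kappa(y_\lambda))$, et l'inclusion $Y_{\wt{\xi}} \to Y$ satisfait précisément les hypothèses de la remarque suivant \ref{l1.1}. Il reste donc à montrer que $\mu_{\wt{\xi}}$ est un isomorphisme.

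Pour ce calcul, j'appliquerais le théorème \ref{p11} à $T/S$ et à $\wt{T}/\wt{S}$ (tous deux lisses de présentation finie sur des bases normales intègres, à composantes irréductibles en nombre fini), obtenant les isomorphismes canoniques $\PI(T/S)_\xi \iso \PI(T_\xi/\xi)$ et $\PI(\wt{T}/\wt{S})_{\wt{\xi}} \iso \PI(\wt{T}_{\wt{\xi}}/\wt{\xi})$. Utilisant l'identification $\PI \simeq \pi_0$ sur un corps (\ref{ex.corps}) et la commutation du produit fibré $\wt{T}_{\wt{\xi}} = \wt{\xi}\times_\xi T_\xi$, le morphisme $\mu_{\wt{\xi}}$ s'identifie à
$$
\pi_0(\wt{\xi}\times_\xi T_\xi/\wt{\xi}) \rrr \wt{\xi}\times_\xi \pi_0(T_\xi/\xi),
$$
qui est un isomorphisme d'après \ref{p1}(ii); le lemme \ref{l1.1} permet alors de conclure. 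L'étape la plus délicate sera la vérification fine de la surjectivité de $\mu$ (nécessaire pour la platitude fidèle), qui relève du formalisme fonctoriel de la construction de $\mu$ par propriété universelle; une fois cette étape acquise, la conclusion découle mécaniquement des résultats déjà établis.
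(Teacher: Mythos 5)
Votre démonstration est correcte et suit essentiellement la même voie que celle du texte : réduction à la fibre générique de $\wt{S}$ via le lemme \ref{l1.1} (après vérification que $\mu$ est étale, quasi-compact, séparé et surjectif par image réciproque de $h_T$), puis identification de $\mu_{\wt{\xi}}$ au morphisme $\pi_0(\wt{\xi}\times_\xi T_\xi/\wt{\xi})\to\wt{\xi}\times_\xi\pi_0(T_\xi/\xi)$ au moyen du théorème \ref{p11} appliqué aux deux morphismes $T\to S$ et $\wt{T}\to\wt{S}$, et de \ref{p1}(ii). Votre vérification des hypothèses de \ref{l1.1} (points maximaux de $\wt{S}\times_S\pi^s(T/S)$ au-dessus de $\wt{\xi}$) est même un peu plus détaillée que celle du texte.
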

\begin{proof}
 Désignons par $\eta \to \xi$ le morphisme induit par $\phi$ sur les points génériques, de sorte qu'on a le carré commutatif suivant
 $$
\xymatrix{\eta \ar[r] \ar[d] & \wt{S} \ar[d]^{\phi}\\
\xi \ar[r] & S .} 
$$

Remarquons d'abord que le morphisme de $\wt{S}$-schémas 
$$
u: \pi^s(\wt{T} / \wt{S}) \to  \wt{S}\times_{S} \pi^s(T/S)
$$
 est étale quasi-compact séparé, et surjectif puisque  le morphisme $T \to \pi^s(T/S)$ est surjectif (\ref{sur}); il suffit donc, d'apr\`es le lemme \ref{l1.1}, de v\'erifier qu'il devient un isomorphisme  par le changement de base $\eta \to \wt{S}$. La commutativité du diagramme suivant montre que, pour voir que $u_{\eta}$ est un isomorphisme, il suffit de vérifier que les morphismes $\alpha, \beta$ et $\gamma$ sont des isomorphismes.
 $$
 \xymatrix{\pi^s(\wt{T}_{\eta}/\eta) \ar[d]_{\alpha} \ar@{=}^{\rm def.}[r] &\pi^s(\eta\times_{\xi}T_{\xi}/\eta) \ar[r]^{\gamma}& \eta\times_{\xi}\pi^s(T_{\xi}/\xi)\ar[d]^{\beta}\\
\pi^s(\wt{T}/\wt{S})_{\eta}\ar[r]_-{u_{\eta}} &\eta\times_{S}\pi^s(T/S) \ar@{=}_{\rm def.}[r] & \eta \times_{\xi}\pi^s(T/S)_{\xi}}
 $$
 Or, les flèches verticales $\alpha$ et $\beta$ sont des isomorphismes, d'après le théorème \ref{p11} appliqué respectivement aux morphismes $\wt{T} \to \wt{S}$, et  $T\to S$. Sur un corps le foncteur $\pi^s$ est isomorphe à $\pi_{0}$, et ce dernier commute  au changement de base $\eta \to \xi$ (\ref{p1} (ii)); on en tire que $\gamma$  est un isomorphisme. 
\end{proof}

 \begin{para}
 Rappelons \cite[0$_{IV}$, 23.2.1]{EGAIV1} qu'un anneau local $A$ est dit unibranche (resp. géométriquement unibranche) si $A_{\rm red}$ est intègre et si sa cl\^oture intégrale est locale (resp. locale à extension résiduelle radicielle).

 Un schéma est dit \emph{géométriquement unibranche} si chacun de ses anneaux locaux l'est \cite[ 6.15.1]{EGAIV2}.
Cette notion intervient dans la suite en raison du critère de \cite[ 6.15.3]{EGAIV2}:
 \medskip
 
 \n {\it Soit $X$ un schéma dont l'ensemble des composantes irréductible est fini. Soit $\varphi : \wt{X} \to X$ le morphisme associé au normalisé  
 $\wt{X}$ de $X_{\rm red}$. Alors $\varphi$ est un homéomorphisme universel si et seulement si $X$ est géométriquement unibranche.}

 Ces considérations montrent que l'énoncé \ref{p11} reste vrai si on y remplace l'hypothèse que $S$ est normal par l'hypothèse que $S$ est géométriquement unibranche. \end{para}
 
\begin{prop} Soit $S$ un sch\'ema géométriquement unibranche  intègre de point générique $\xi$. Soit $f : T \rrr S$  un morphisme lisse de présentation finie. Alors la restriction à la fibre générique définit une équivalence de catégories
$$
\e(T/S)\;  \rt \; \e(T_{\xi}/ \xi).
$$
En particulier, on a un isomorphisme
$$
\pi^s(T_{\xi} / \xi)  \; \iso \; \pi^s(T/S)_{\xi}  ,
$$
et pour tout ouvert non vide $U$ de $S$, le morphisme canonique
$$
 \pi^s(U\times_{S}T/U) \to U\times_{S}\pi^s(T/S) 
$$
est un isomorphisme; autrement dit, le foncteur $T \longmapsto \pi^s(T/S)$, adjoint à gauche de  l'inclusion de catégories 
$$
\et_{S} \, \rrr \, \sm_{S}
$$
commute à la restriction aux ouverts de $S$.
\end{prop}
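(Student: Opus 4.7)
La stratégie consiste à se ramener au cas normal déjà traité au Théorème \ref{p11} via la normalisation. D'après le critère rappelé juste avant l'énoncé, puisque $S$ est intègre et géométriquement unibranche, le morphisme de normalisation $\varphi : \wt{S} \to S$ est un homéomorphisme universel, avec $\wt{S}$ normal intègre; notons $\wt{\xi}$ son point générique. Comme $\varphi$ est radiciel, l'extension résiduelle $\kappa(\xi) \to \kappa(\wt{\xi})$ est purement inséparable, de sorte que le morphisme $\wt{\xi} \to \xi$ est lui aussi un homéomorphisme universel.

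Posons $\wt{T} = \wt{S}\times_{S}T$, qui est lisse de présentation finie sur $\wt{S}$. Le Théorème \ref{p11} appliqué au couple $(\wt{T}/\wt{S})$ fournit une équivalence $\e(\wt{T}/\wt{S}) \, \rt \, \e(\wt{T}_{\wt{\xi}}/\wt{\xi})$. D'autre part, le Corollaire \ref{homeo} appliqué aux homéomorphismes universels $\varphi : \wt{S} \to S$ et $\wt{\xi} \to \xi$ donne deux équivalences
$$
\e(T/S) \, \rt \, \e(\wt{T}/\wt{S}) \qquad \textrm{et} \qquad \e(T_{\xi}/\xi) \, \rt \, \e(\wt{T}_{\wt{\xi}}/\wt{\xi}),
$$
ainsi que l'isomorphisme $\pi^s(\wt{T}/\wt{S}) \iso \wt{S}\times_{S}\pi^s(T/S)$. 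Grâce à la commutativité du carré $\wt{\xi}\to\wt{S}, \; \xi\to S$, la composition de ces trois foncteurs est naturellement isomorphe au foncteur direct de restriction à la fibre générique; c'est donc une équivalence $\e(T/S) \,\rt\, \e(T_{\xi}/\xi)$, et comme toute équivalence échange les objets initiaux, on en déduit l'isomorphisme $\pi^s(T_{\xi}/\xi) \iso \pi^s(T/S)_{\xi}$.

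L'énoncé pour un ouvert non vide $U \subset S$ s'obtient par le même procédé. Posons $\wt{U} = \varphi^{-1}(U)$: c'est un ouvert non vide de $\wt{S}$, et le morphisme $\wt{U} \to U$ est encore un homéomorphisme universel. On combine alors l'isomorphisme $\pi^s(\wt{T}_{\wt{U}}/\wt{U}) \iso \wt{U}\times_{\wt{S}}\pi^s(\wt{T}/\wt{S})$, fourni par le Théorème \ref{p11} sur la base normale $\wt{S}$, avec les isomorphismes $\pi^s(\wt{T}/\wt{S}) \iso \wt{S}\times_{S}\pi^s(T/S)$ et $\pi^s(\wt{T}_{\wt{U}}/\wt{U}) \iso \wt{U}\times_{U}\pi^s(T_{U}/U)$ issus du Corollaire \ref{homeo}, pour conclure que le morphisme canonique $\pi^s(T_{U}/U) \to U\times_{S}\pi^s(T/S)$ devient un isomorphisme par l'équivalence $\e(T_{U}/U)\,\rt\,\e(\wt{T}_{\wt{U}}/\wt{U})$, donc est lui-même un isomorphisme. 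Le point le plus délicat est, ici comme dans le premier cas, de s'assurer que les compositions d'équivalences invoquées sont bien naturellement isomorphes aux foncteurs de restriction directs; cela relève toutefois d'une vérification routinière basée sur la fonctorialité de $\e(-)$ vis-à-vis des carrés commutatifs de morphismes de schémas.
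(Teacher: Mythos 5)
Votre démonstration est correcte et suit essentiellement la même voie que celle du texte : réduction au cas normal via la normalisation $\wt{S}\to S$, qui est un homéomorphisme universel par l'hypothèse d'unibranchité géométrique, puis combinaison de \ref{homeo}, $(a)$, et du théorème \ref{p11} dans un carré commutatif de foncteurs. La seule différence est mineure : comme $\wt{S}$ est la clôture intégrale de $S$ dans son corps des fonctions, on a $\kappa(\wt{\xi})=\kappa(\xi)$, donc $\wt{T}_{\wt{\xi}}=T_{\xi}$ et votre seconde application de \ref{homeo} au morphisme $\wt{\xi}\to\xi$ (correcte, mais superflue) peut être remplacée par une égalité; en revanche, vous explicitez utilement la partie relative aux ouverts $U$, que la démonstration du texte laisse implicite.
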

 
 \begin{proof} Soit $\phi : \wt{S} \to S$ le normalisé de $S$. Puisque $T$ est lisse sur $S$, la fibre générique $T_{\xi}$  est un schéma normal, donc $\wt{T}_{\xi} = T_{\xi}$. Considérons le diagramme commutatif
 $$
 \xymatrix{\e(T) \ar[r]^{\phi^*} \ar[d]_{j^*}& \e(\wt{T}) \ar[d]^{\wt{j}^*}\\
 \e(T_{\xi}) \ar@{=}[r] & \e(\wt{T}_{\xi})}
 $$
 Par hypothèse, 
 $\phi$ est un homéomorphisme universel; on peut donc invoquer \ref{homeo},$(a)$ pour voir que le foncteur horizontal $\phi^*$ est une équivalence de catégories; comme $\wt{S}$ est normal et que $T$ est lisse sur $S$, le foncteur  $\wt{j}^*$ est une équivalence de catégories \ref{p11}. Le foncteur $j^*$ est donc lui aussi une équivalence.
 \end{proof}
 

\section{L'adjoint d'un étale}

\subsection{Adhérence d'une relation ouverte}

\begin{lemme}\label{adhR} Soit $f: T \to S$ un morphisme lisse de présentation finie. On suppose que $S$ est normal et intègre. Soit $R \subset T\times_{S}T$ un ouvert qui est le graphe d'une relation d'équivalence. Alors l'adhérence schématique $\overline{R}$ de $R$ dans $T\times_{S}T$ est un ouvert fermé qui est le graphe d'une relation d'équivalence.
\end{lemme}
\begin{proof}Comme il est rappelé en (\ref{s6}), le schéma $T^2 = T\times_{S}T$ est normal et ses composantes irréductibles sont en nombre fini; par suite (\ref{l7}) $\overline{R}$ est un sous-schéma ouvert et  fermé dans $T^2$; il définit une relation qui est clairement réflexive et symétrique; sa transitivité équivaut  (\ref{par3}) à l'inclusion
$$
{p'_{0}}\iv(\overline{R}) \cap {p'_{2}}\iv(\overline{R}) \subset {p'_{1}}\iv(\overline{R}) .
$$
Or, les projections $p'_{i}: T^{3}\to T^{2}$ sont des morphismes plats, ils préservent donc les adhérences schématiques (\ref{adh.sch}); d'où les égalités ${p'_{i}}\iv(\overline{R}) = \overline{{p'_{i}}\iv(R)}$;  pour conclure on utilise la compatibilité de l'adhérence aux intersections (\ref{l7}) et le fait que $R$ est une relation d'équivalence.
\end{proof}

\begin{prop}\phantomsection\label{p.12} {\rm  (L'adjoint d'un étale)} Soit $S$ un schéma intègre et normal, et $f : T \to S$ un morphisme étale de présentation finie. Alors le morphisme 
$$
h : T \to \pi^s(T/ S)
$$
est un isomorphisme local \cite[I, 4.4.2]{EGAI} ; plus précisément, $T$ admet un recouvrement par des ouverts $U$ sur lesquels le morphisme $h$ induit un isomorphisme de $U$ sur l'ouvert $h(U)$.

Enfin,  ce morphisme $h$ est aussi l'enveloppe séparée de $T \to S$ au sens suivant: tout morphisme de $S$-schémas $h': T\to F$, où $F$ est séparé sur $S$, est de la forme $h' = uh$ pour un unique morphisme $u: \pi^s(T/ S) \to F$ (On ne suppose pas que $F$ soit plat ni de type fini sur $S$).
\end{prop}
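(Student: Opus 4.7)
Je traiterai d'abord la propri\'et\'e d'\og isomorphisme local \fg, puis l'universalit\'e \'etendue aux cibles s\'epar\'ees. Remarquons d'abord que $h$ est \'etale : le compos\'e $T\to S$ est \'etale, et le morphisme diagonal de $\pi^{s}(T/S)/S$, qui est une immersion ouverte et ferm\'ee, est \'etale ; le Lemme \ref{l1.2} appliqu\'e avec la propri\'et\'e \og\'etale\fg\ entra\^\i{}ne donc que $h$ l'est. Comme $T$ est \'etale sur $S$ normal, $T$ est normal, et comme $T$ est de pr\'esentation finie sur $S$ int\`egre, ses composantes irr\'eductibles sont en nombre fini (\ref{l1.3}), donc ouvertes et ferm\'ees. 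Pour tout $t\in T$, je choisirai un voisinage ouvert affine $W \ni t$ dans $T$, puis la composante connexe (c'est-\`a-dire irr\'eductible, par normalit\'e) $V$ de $W$ contenant $t$ ; $V$ est un ouvert ferm\'e de $W$, donc affine, en particulier un objet de $\et_{S}$ qui est un ouvert de $T$. Le but est d'\'etablir que $\alpha_{V} := h|_{V} : V \to \pi^{s}(T/S)$ est une immersion ouverte.

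Le morphisme $\alpha_{V}$ est \'etale (m\^eme argument que pour $h$) ; son image est connexe, donc contenue dans une composante connexe $E$ de $\pi^{s}(T/S)$, qui est irr\'eductible (normale et connexe) et ouverte ferm\'ee. Le Th\'eor\`eme \ref{p11} appliqu\'e au morphisme lisse $T\to S$ identifie $\pi^{s}(T/S)_{\xi}$ \`a $\pi^{s}(T_{\xi}/\xi) = T_{\xi}$ (le $\xi$-sch\'ema \'etale $T_{\xi}$ co\"incidant avec son $\pi^{s}$), avec $h_{\xi} = \mathrm{id}_{T_{\xi}}$ ; donc $\alpha_{V}|_{\xi}$ s'identifie \`a l'immersion ouverte $V_{\xi}\hookrightarrow T_{\xi}$, et par irr\'eductibilit\'e de $V$ et $E$ on obtient un isomorphisme $V_{\xi}\iso E_{\xi}$. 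Ainsi $\alpha_{V}:V\to E$ est un morphisme \'etale birationnel entre sch\'emas irr\'eductibles normaux de pr\'esentation finie. Le th\'eor\`eme principal de Zariski donne, localement sur $V$, une factorisation $V \to V^{*} \to E$ en immersion ouverte suivie d'un morphisme fini ; en prenant $V^{*}$ tel que $V$ y soit sch\'ematiquement dense, $V^{*}$ est irr\'eductible, et $V^{*}\to E$ est alors finie et birationnelle sur $E$ normal, donc un isomorphisme. Par suite $V\to E$ est une immersion ouverte ; la composition avec l'immersion ouverte ferm\'ee $E\hookrightarrow \pi^{s}(T/S)$ donne la conclusion. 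C'est cette derni\`ere \'etape, qui combine le th\'eor\`eme principal de Zariski et la normalit\'e de la base pour \og d\'esingulariser\fg\ localement $\alpha_{V}$, qui constitue le point le plus d\'elicat.

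Pour la propri\'et\'e universelle vis-\`a-vis des cibles s\'epar\'ees, je partirai de la description (Th\'eor\`emes \ref{t1} et \ref{p7.1}) de $\pi^{s}(T/S)$ comme quotient fppf $T/R$, o\`u $R = T\times_{\pi^{s}(T/S)}T$ est l'adh\'erence sch\'ematique $\overline{\Delta}$ de la diagonale $\Delta\subset T\times_{S}T$. Pour un morphisme $h':T\to F$ avec $F/S$ s\'epar\'e, il suffit de v\'erifier que $h'\circ q_{1} = h'\circ q_{2}$, o\`u $q_{1},q_{2}:R\to T$ sont les deux projections, car $F$ est un faisceau fppf et $h'$ se factorise alors de mani\`ere unique par un morphisme $u:\pi^{s}(T/S)\to F$. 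Pour cette \'egalit\'e, je consid\'ererai l'\'egaliseur sch\'ematique $E' = R\times_{F\times_{S}F}F$, qui est un sous-sch\'ema ferm\'e de $R$ puisque la diagonale $\Delta_{F/S}$ est une immersion ferm\'ee par s\'eparation de $F/S$. L'immersion ouverte $\Delta_{h}:T\to R$ satisfait $q_{1}\Delta_{h}=q_{2}\Delta_{h}=\mathrm{id}_{T}$, donc se factorise par $E'$ ; or $R$ \'etant l'adh\'erence sch\'ematique de $\Delta_{h}(T)$, tout sous-sch\'ema ferm\'e de $R$ par lequel $\Delta_{h}$ se factorise co\"incide avec $R$. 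On en d\'eduit $E'=R$, d'o\`u l'\'egalit\'e cherch\'ee et l'existence, forc\'ement unique, du morphisme $u$.
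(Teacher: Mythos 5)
Votre d\'emonstration est correcte ; la seconde partie co\"incide pour l'essentiel avec celle du texte, tandis que la premi\`ere suit une route r\'eellement diff\'erente. Pour l'enveloppe s\'epar\'ee, vous remplacez le morphisme $w\colon T\times_{E}T\to T\times_{F}T$ du texte par l'\'egaliseur $E'=R\times_{F\times_{S}F}F$, avec les m\^emes ingr\'edients ($\Delta_{F/S}$ immersion ferm\'ee, $\Delta_{h}$ sch\'ematiquement dominant dans $R$, exactitude de $R\rightrightarrows T\to\pi^{s}(T/S)$) ; notez seulement que l'identification $R=\overline{\Delta}$ que vous invoquez ne d\'ecoule pas des seuls \ref{t1} et \ref{p7.1} : il faut savoir que $\overline{\Delta}$ est un ouvert ferm\'e qui est le graphe d'une relation d'\'equivalence (c'est le lemme \ref{adhR}, o\`u intervient la normalit\'e), puis conclure par minimalit\'e (\ref{cor1}). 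Pour l'isomorphisme local, le texte, une fois acquis $T\times_{E}T=\overline{\Delta}$, recouvre $T$ par des ouverts $U$ s\'epar\'es sur $S$ et observe que $\overline{\Delta}\cap(U\times_{S}U)$ est l'adh\'erence sch\'ematique de $\Delta_{U}$, d\'ej\`a ferm\'ee par s\'eparation ; donc $U\times_{E}U=\Delta_{U}$ et $h|_{U}$ est un monomorphisme \'etale, c'est-\`a-dire une immersion ouverte. Vous obtenez la m\^eme conclusion en \'etablissant la birationalit\'e de $V\to E$ au moyen de l'isomorphisme g\'en\'erique de \ref{p11}, puis en appliquant le th\'eor\`eme principal de Zariski et la normalit\'e de $E$ (un morphisme \'etale s\'epar\'e birationnel d'un sch\'ema int\`egre vers un sch\'ema normal int\`egre est une immersion ouverte) ; l'argument est valable --- la factorisation de Zariski est d'ailleurs globale sur $V$, nul besoin de la prendre localement --- mais il mobilise des outils sensiblement plus lourds que la simple remarque du texte sur l'adh\'erence d'un ferm\'e ; il a en revanche l'avantage de d\'esigner explicitement des ouverts concrets ($V=$ composantes connexes des ouverts affines de $T$) sur lesquels $h$ est une immersion ouverte.
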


\begin{proof} Comme $f$ est étale, le morphisme diagonal $T \to T\times_{S}T$ est une immersion ouverte; le lemme qui précède (\ref{adhR}) montre que l'adhérence $R$ du sous-schéma diagonal est une partie ouverte et fermée de $T\times_{S}T$ qui est le graphe d'une relation d'équivalence. Elle est évidemment la relation, à graphe ouvert fermé, minimale; le quotient $T/R$ définit donc $\pi^s(T/ S)$.\

Montrons que le morphisme $h$ est un isomorphisme local. On peut recouvrir $T$ par des ouverts $U$ tels que $h(U)$ soit contenu dans un ouvert affine de $S$, donc tels que le morphisme $U\to S$ soit séparé. La relation induite par $R$ sur un tel ouvert est $R_{U} = R \cap (U\times_{S}U)$; c'est donc l'adhérence schématique dans  $U\times_{S}U$ du sous-schéma diagonal, lequel est fermé puisque $U$ est séparé sur $S$ ; donc $R_{U}$ est ce sous-schéma diagonal; ainsi, $R_{U}$ est la relation triviale sur $U$, et $h$ induit un isomorphisme $U \simeq h(U)$.

Montrons enfin que $h$ est l'enveloppe séparée de $f$. Posons $E = \pi^s(T/ S)$; on a vu au début de la démonstration que la relation $R = T\times_{E}T$ qui définit $E$ est l'adhérence schématique de la diagonale; cela implique que le morphisme $\Delta_{h}: T \to T\times_{E}T$ est \scd. Soit $h' : T \to F$ un morphisme vers un $S$-schéma séparé. Considérons le diagramme commutatif suivant.
$$
\xymatrix{T \ar[r]^{\Delta_{h}} \ar[d]_{\Delta_{h'}} & T\times_{E}T \ar@{-->}[dl]^w \ar[d]^{\phi}\\
T\times_{F}T \ar[r]_{\psi} & T\times_{S} T.}
$$
Les morphismes $\phi$ et $\psi$ sont des immersions ferm\'ees puisque $E$ et $F$ sont s\'epar\'es sur $S$, et on a vu que $\Delta_{h}$ est \scd\, ; on en tire  l'existence du morphisme $w$ rendant les triangles commutatifs. Consid\'erons maintenant le diagramme
$$
\xymatrix{T\times_{E}T \ar[d]_{w} \ar@<0.5ex>[r] \ar@<-0.5ex>[r] &T \ar@{=}[d] \ar[r]^h & E\ar@{-->}[d]^u\\
T\times_{F}T  \ar@<0.5ex>[r] \ar@<-0.5ex>[r] &T \ar[r]_{h'} & F .}
$$
La ligne sup\'erieure est exacte puisque $h$ est fid\`element plat quasi-compact, donc un \'epimorphisme effectif \cite[VIII 5.3]{SGA1} ; d'o\`u l'existence et l'unicit\'e de $u$. 
\end{proof}

 \subsection{L'espace des composantes connexes des fibres}\label{LMB} C'est le lieu ici d'évoquer rapidement l'espace algébrique $\pi_{0}$ qui représente les composantes connexes des fibres géométriques d'un morphisme lisse; cet espace algébrique est signalé dans le livre de {\sc Laumon -- Moret-Bailly} \cite[6.8]{LMB00} pour les schémas, et généralisé pour les champs par {\sc M. Romagny} \cite[6.2.6]{Rom11}. Voici l'énoncé pour les schémas.\\
 
 {\it Soit $f : T \rrr S$ un morphisme lisse de présentation finie de schémas. Alors il existe un \emph{espace alg\'ebrique} $\pi_{0}(T/S)$ qui est \'etale et quasi-compact sur $S$ et un morphisme $h : T \rrr \pi_{0}(T/S)$ ayant les propriétés suivantes:
 \begin{itemize}
 \item (a) Pour tout point géométrique $\xi$ de $S$, $\pi_{0}(T/S)(\xi)$ s'identifie à l'ensemble des composantes connexes de $T_{\xi}$;
 \item (b) les fibres de $h$ sont  g\'eom\'etriquement connexes (donc géométriquement irréductibles, puisque $h$ est lisse);
 \item (c) tout morphisme de $S$-schémas $T \to E$, o\`u $E$ est étale, se factorise par $h$;
 \item (d) la formation de $\pi_{0}(T/S)$ commute \`a tout changement de base sur $S$.
 \end{itemize} }
 
 Cet espace $\pi_{0}$ est construit comme le faisceau quotient $T/R$ pour la relation d'équivalence dont le graphe $R \subset T\times_{S}T$ est la réunion des composantes connexes des fibres de la première projection $p: T\times_{S}T \to T$, qui rencontrent la diagonale.
 
 Indiquons succinctement comment la propriété $(a)$ conduit à cette relation $R$ : soit $\xi$ un point géométrique de $S$; deux points $x$ et $y$ de $T_{\xi}$ sont dans la même composante connexe $C$ de $T_{\xi}$ si  on a $(x, y) \in C\times C \subset T_{\xi}\times_{\xi}T_{\xi}$; notant $C(x)$ la composante connexe de $x$, le graphe de la relation: \og être dans la même composante connexe\fg\, est donc 
 $$
 \bigcup_{x\in T_{\xi}} x \times C(x) ;
 $$
or, $x \times C(x)$ est la composante connexe de $p_{\xi}^{-1}(x) = x\times T_{\xi}$ qui rencontre la diagonale. Cette remarque et un peu de travail conduisent à la description donnée de la relation d'équivalence $R$.
 Le théorème \cite[15.6.5]{EGAIV3}  montre alors que $R$ est un sous-schéma ouvert de $T\times_{S}T$.\\

Notons qu'aucune des quatre propriétés énoncées pour $\pi_{0}$ n'est en général vérifiée pour $\pi^s$.\\

\subsection{Le morphisme $\pi_{0}(T/S) \to \pi^s(T/S)$}

La propriété universelle $(c)$ de $\pi_{0}$ montre qu'il existe un morphisme d'espaces algébriques $\theta: \pi_{0}(T/S) \to \pi^s(T/S)$, que l'on peut préciser:

\begin{prop} On suppose que $S$ est normal intègre et que le morphisme $T\to S$ est lisse. Soit $R$ l'ouvert de $T\times_{S}T$ qui est le graphe de la relation d'équivalence qui définit $\pi_{0}(T/S)${\rm (\ref{LMB})}. Alors l'adhérence schématique $\overline{R}$ de $R$ dans $T\times_{S}T$ est le graphe de la relation d'équivalence qui définit $\pi^s(T/S)$.

De plus, tout morphisme de $S$-espaces algébriques $\pi_{0}(T/S) \to F$, où $F$ est séparé, se factorise de fa\c con unique par $\theta$; en d'autres termes, $\theta$ fait de $\pi^s(T/S)$ l'enveloppe séparée de $\pi_{0}(T/S)$.
\end{prop}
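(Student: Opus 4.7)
The plan has two parts, mirroring the two assertions.

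For the identification of $\overline{R}$, I would apply Lemma \ref{adhR} directly: since $S$ is normal int\`egre and $T\to S$ is lisse de pr\'esentation finie, $\overline{R}$ is an open-closed subscheme of $T\times_{S}T$ and is itself the graph of an equivalence relation. Denote by $R^s = T\times_{\pi^s(T/S)} T$ the open-closed relation corresponding to $\pi^s(T/S)$ under the equivalence of \ref{p7.1}. The morphism $\theta: \pi_0(T/S)\to\pi^s(T/S)$ yields $R \subseteq R^s$ as subschemes of $T\times_{S}T$; since $R^s$ is closed, one obtains $\overline{R} \subseteq R^s$. Conversely, Corollary \ref{cor1} says that $R^s$ is the minimal object of $\mathsf{R}(T/S)$, so $R^s \subseteq \overline{R}$, giving equality.

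For the universal property, let $g : \pi_0(T/S) \to F$ with $F$ s\'epar\'e sur $S$, and set $h' = g\circ(T\to \pi_0(T/S))$. Since $T\to \pi_0(T/S)$ is an fppf epimorphism (it is lisse, surjectif et quasi-compact by \ref{LMB}) and $h_T : T \to \pi^s(T/S)$ is fid\`element plat de pr\'esentation finie by \ref{T1}, hence an effective fppf epimorphism, producing $u : \pi^s(T/S)\to F$ with $u\circ \theta = g$ amounts to showing that the two composites
\[
R^s \rightrightarrows T \by{h'} F
\]
coincide. I would form the equalizer $Y' = R^s\times_{F\times_{S}F} F$; the s\'eparation of $F$ makes $\Delta_F$ a closed immersion of $S$-espaces alg\'ebriques, so $Y'\hookrightarrow R^s$ is a closed subscheme.

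The two composites agree on $R$ by the very definition of $\pi_0(T/S)$, i.e.\ $R\subseteq Y'$ as subschemes of $R^s$. The crucial step---and the only place where the identification $R^s = \overline{R}$ proved above is used---is that $R$ is sch\'ematiquement dense in its closure $R^s$; consequently the defining ideal of $Y'$ vanishes on $R$, hence on $R^s$, forcing $Y' = R^s$. Uniqueness of $u$ follows by composing with $T\to \pi_0(T/S)$ and invoking that $h_T$ is an \'epimorphisme. The main conceptual obstacle is that $F$ is an algebraic space rather than a scheme, but the separation hypothesis makes $\Delta_F$ representable by a closed immersion and reduces everything to the scheme-theoretic statement about $Y'\subseteq R^s$.
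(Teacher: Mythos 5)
Your proposal is correct and follows essentially the same route as the paper: the first part via Lemme \ref{adhR} combined with the inclusion $R\subset R^s$ coming from $\theta$ and the minimality of $R^s$ (\ref{cor1}), and the second part via the schematic density of $R$ in $\overline{R}$ together with the closedness of $\Delta_F$, followed by descent along the effective epimorphism $h_T$. The paper packages the density step as the existence of a lift $w:\overline{R}\to T\times_F T$ through the closed immersion $T\times_F T\to T\times_S T$ rather than as your equalizer $Y'=R^s$, but these are the same argument.
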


Rappelons qu'un espace algébrique étale \emph{et séparé} est (représentable par) un schéma (\cite[A.2]{LMB00}), ou (\cite[6.16]{Knu71}).
\begin{proof} Que l'adhérence $\overline{R}$ soit le graphe ouvert et fermé d'une relation d'équivalence est établi dans (\ref{adhR}). Montrons que cette relation est minimale parmi les relations à graphe ouvert fermé dans $T\times_{S}T$, i.e. que c'est la relation d'équivalence $R'$ qui définit $\pi^s(T/S)$. L'existence du morphisme $\pi_{0}(T/S) \to \pi^s(T/S)$ entraîne l'inclusion $R \subset R'$; puisque le sous-schéma $R'$ est fermé, il contient $\overline{R}$, et la minimalité de $R'$ implique l'égalité cherchée.

La démonstration de la deuxième partie est analogue à celle de la fin de \ref{p.12}: le diagramme commutatif suivant résume la situation.
$$
\xymatrix{R \ar@{=}[r] & T\times_{\pi_{0}}T \ar[r]^{j} \ar[d] & T\times_{\pi^s}T \ar@{-->}[dl]^w \ar[d]^{\phi} \ar@{=}[r] & \overline{R}\\
& T\times_{F}T \ar[r]_{\psi} & T\times_{S} T&}
$$
Les morphismes $\phi$ et $\psi$ sont des immersions ferm\'ees puisque $\pi^s$ et $F$ sont s\'epar\'es sur $S$, et  $j$ est \scd\,  (\ref{im.sch}); on en tire  l'existence du morphisme $w$ rendant les triangles commutatifs. On termine la démonstration comme dans \ref{p.12}.
\end{proof}

\begin{para} Sous les hypothèses du lemme, il est vraisemblable que le morphisme $\theta: \pi_{0}(T/S) \to \pi^s(T/S)$ soit un isomorphisme local (local sur la source); d'après la proposition \ref{p.12}, c'est vrai si $\pi_{0}(T/S)$ est un schéma; mais dans l'exemple qui suit l'espace algébrique $\pi_{0}(T/S)$ n'est pas un schéma.\end{para}

\begin{ex}\label{ex} Cet exemple est signalé dans \cite[6.8.1]{LMB00}; c'est l'ouvert de lissité de la courbe de {\sc Mumford} citée p.210 de \cite{BLR90}. Il met en lumière quelques différences entre les foncteurs $\pi_{0}$ et $\pi^s$. 

On pose $S = \s(A)$, où $A =  {\bf R}[[t]]$,  et on prend pour $T$ le complémentaire de l'origine dans $\s(A[X, Y]/(X^{2} + Y^{2} -t))$.

Le morphisme $f : T\to S$ est lisse, sa fibre générique $T_{\eta} \to \eta$ est géométriquement irréductible. La fibre fermée $T_{s} \to s$ est isomorphe au morphisme 
$$
\s({\bf C}[X]_{X}) \to \s({\bf R}).
$$ 

On en tire les propriétés suivantes.
\begin{itemize}
\item (i) Pour toute factorisation $T \by{h} E \by{g} S$, où $h$ est surjectif et $g$ étale (séparé ou non), $g$ est un isomorphisme; en particulier $\pi^s(T/S) = S$;
\item (ii) le morphisme canonique $\pi^s(T_{s}/s)= \s({\bf C})  \to \pi^s(T/S)_{s}=\s({\bf R})$ n'est pas un isomorphisme (l'isomorphisme $\pi^s(T_{\xi}/\xi) \simeq \pi^s(T/S)_{\xi}$ de \ref{p11}, établi pour la fibre générique, n'est donc plus vrai pour la fibre fermée);
\item (iii) la fibre fermée du morphisme $h: T \to \pi^s(T/S)=S$ est irréductible mais pas géométriquement connexe (alors que la fibre fermée de $T\to \pi_{0}(T/S)$ est géométriquement connexe);
\item (iv) l'espace algébrique $\pi_{0}(T/S)$ \emph{n'est pas un schéma}.
\end{itemize}
\bb

(i) Comme la fibre générique $T_{\eta} \rrr \eta$ est géométriquement irréductible et que $h_{\eta}: T_{\eta}\to E_{\eta}$ est fidèlement plat, le morphisme
étale $E_{\eta} \to \eta$ est  géométriquement irréductible, donc  un isomorphisme; en particulier,  les points fermés de $E$ sont dans la fibre fermée $E_{s}$. Le morphisme $T_{s}\to E_{s}$ est fidèlement plat et  $T_{s}$ est connexe, l'espace discret $E_{s}$  a donc un seul point, noté $e$ ;  comme c'est l'unique point fermé de $E$ le morphisme canonique $\s(\oo_{E, e})\rrr E$ est un isomorphisme. Cela montre que le morphisme $g$ est affine fidèlement plat et birationnel; c'est un isomorphisme.

(ii) et (iii). Clair.

(iv) (Nous devons cet argument à {\sc M. Romagny}). Notons $K = \kappa(\eta)$ le corps des fractions de $A$, et posons $P = \pi_{0}(T/S)$. Cet espace algébrique est étale et commute aux changements de base ; lorsque la base est le spectre d'un corps $k$, cet espace est le spectre de la cl\^oture séparable de $k$ dans l'anneau des sections globales du schéma. Comme $T_{\eta} \to \eta$ est géométriquement irréductible, $K$ est algébriquement fermé dans $\Gamma(T_{\eta})$, donc $P_{\eta} \, \simeq \, \s(K)$. Comme la fibre fermée $T_{s} \to s$ est isomorphe à $\s({\bf C}[X]_{X}) \to \s({\bf R})$ la fermeture algébrique de ${\bf R}$ dans ${\bf C}[X]_{X}$ est égale à ${\bf C}$.  Cela montre que $P$ n'est pas un schéma. En effet, soit $e$ l'unique point de $P_{s}$ ; si $P$ était un schéma étale sur le trait hensélien $S = \s({\bf R}[[t]])$, le morphisme composé
 $$
 \s(\oo_{P, e}) \rrr P \rrr S
 $$
 serait fini \cite[18.5.11, $c')$]{EGAIV4}, donc libre ; ce qui est impossible puisque le rang générique est 1, et le rang spécial 2.
\end{ex}


 \appendix
 
 \section{Autres démonstations du théorème \ref{t1}}\label{sA}

Rappelons l'énoncé du théorème en question.

{\it Soit $f : T \rrr S$ un morphisme plat et de présentation finie de schémas. Soit $d : R \rrr T\times_{S}T$ une immersion ouverte et fermée, graphe d'une relation d'équivalence dans $T$. Alors le faisceau fppf quotient $T/R$ est représentable par un schéma $E$ quasi-compact étale et séparé sur $S$. Le morphisme $R \to T\times_{E}T$ est un isomorphisme, i.e. la relation d'équivalence est effective.}
 
Le schéma quotient de l'énoncé est construit comme un ouvert d'un schéma affine sur $S$. 

 Notons que si $T \to S$ est un morphisme quasi-compact étale et séparé, et si on prend pour relation le morphisme diagonal lui-même $T \to T\times_{S}T$, on obtient directement le résultat bien connu que $T$ est quasi-affine sur $S$. 
En admettant ce point, ainsi que l'effectivité des données de descente fpqc pour de tels schémas \cite[IX, 4.1, p.182]{SGA1}, {\sc L. Moret-Bailly} propose une troisième démonstration, plus courte, de ce théorème ; elle est donnée en \ref{3.5}.

Nous donnons ici une démonstration directe de ce  théorème: elle n'utilise que  les définitions et les propriétés les plus élémentaires des objets introduits, et pas les résultats profonds de {\sc M. Artin}.

\subsection{Notations et lemmes préliminaires}\

Par souci de référence, nous adoptons les conventions d'indices proposées par {\sc P. Gabriel} dans \cite[V, \S\S1 \`a 3]{SGA3}. 
En particulier, les morphismes de projection entre produits,  $p_{k} :T^n \rrr T^{n-1}$ sont index\'es de $0$ \`a $n-1$, l'indice $k$ d\'esignant la composante \emph{omise} ; ainsi, $p_{0}(x_{0}, x_{1}, x_{2}) = (x_{1}, x_{2})$.

Mais, contrairement à {\sc Gabriel}, et en  suivant un usage répandu, nous notons $R$ et $R'$ ce qu'il note $T_{1}$  et $T_{2}$.
\medskip

\n {\it Dans l'écriture des puissances d'un  $S$-schémas, la lettre $S$ en indice sera désormais omise.}\\

Une relation d'équivalence dans un $S$-schéma $T$ est la donnée d'un sous-schéma $d: R \to T^2$ tel que pour tout $S$-schéma $Z$ l'ensemble $R(Z) = \Hom_{S}(Z, R)$ soit le graphe d'une relation d'équivalence dans $T(Z)$; le schéma $R \subset T^2$ représente les $(x, y) \in T^2(Z)$ qui sont équivalents (notation $x\sim y$). On détaille ces données et hypothèses en  \ref{par1}, \ref{par2} et \ref{par3}.  
\medskip
 
\begin{quote}\begin{para}\phantomsection\label{par1}(Réflexivité) {\it Une relation d'équivalence sur $T$  comporte une  immersion $d : R \to T^2$, et un morphisme de $S$-schémas $s: T \to R$ qui factorisent le morphisme diagonal:}
 $$
 d \small{\circ} s \, = \, \Delta_{T/S} .
 $$
 \end{para}
 \end{quote}
 
\begin{quote}\begin{para}\phantomsection\label{par2} (Symétrie) {\it L'automorphisme $\sigma$ de  permutation  des facteurs de $T^2$ stabilise $R$.}
\end{para}\end{quote}
\bb
 
 Pour énoncer ce qui correspond à la propriété de \og transitivité\fg,
 on introduit les morphismes $d_{0} = p_{0}d$, et $d_{1}=p_{1}d : \xymatrix{R \ar@<0.5ex>[r]\ar@<-0.5ex>[r]& T}$,
ainsi que le schéma
$$
R' \, = \,  (R, d_{0})\times_{T}(R, d_{1}) 
$$
Utilisant le fait que le carré suivant est cartésien
$$
\xymatrix{T^3 \ar[r]^{p'_{0}} \ar[d]_{p'_{2}} &T^2 \ar[d]^{p_{1}}\\
T^2 \ar[r]_{p_{0}} &T}
$$
on peut écrire $ R' \, = \,  {p'_{0}}^{-1}(R) \cap {p'_{2}}^{-1}(R)$, et
on dispose donc d'une immersion  $d' : R' \, \rrr\, T^3$ qui permet d'identifier $R'$ à un sous-schéma de $T^3$;  en notant $d'_{i}$ les morphismes induits par les projections $p'_{i}$, on a alors  $d'_{0}(x, y, z) = (y, z)$, $d'_{1}(x, y, z) = (x, z)$ et $d'_{2}(x, y, z) = (x, y)$ ; ainsi $R'(Z)$ s'identifie à l'ensemble des triplets $(x, y, z)$ tels que $x\sim y$ et $y \sim z$; la \emph{transitivité} se traduit donc par l'inclusion\\

\begin{para}\phantomsection\label{par3} $ {p'_{0}}^{-1}(R) \cap {p'_{2}}^{-1}(R)  \subset {p'_{1}}^{-1}(R).$\\

Précisons que le symbole ${p'_{i}}^{-1}(R)$ désigne ici le sous-schéma $(R, d)\times_{T^2}(T^3, p'_{i})$ de  $T^3$, image réciproque de $R$ par $p'_{i}: T^3 \to T^2$.

\end{para}

 \bb
 
Le résultat suivant résume les propriétés générales qui seront utilisées..

\begin{lemme}\phantomsection\label{l1} \emph{ \cite[V,1, p.257]{SGA3}}  Soit $R$ une relation d'équivalence dans le $S$-schéma $T$. Alors dans le diagramme $$
  \xymatrix{R'   \ar@<0.5ex>[r]^{d'_{1}}  \ar@<-0.5ex>[r]_{d'_{0}} \ar[d]_{d'_{2}} & R \ar[d]^{d_{1}}  \ar[r]^{d_{0}} & T\\
  R  \ar@<0.5ex>[r]^{d_{1}}  \ar@<-0.5ex>[r]_{d_{0}}&T& }
  $$ la première ligne est exacte, et $R'$ s'identifie au produit fibré $(R, d_{0})\times_{T}(R, d_{0})$; les deux carrés de gauche de même indice sont cartésiens. \qed
  \end{lemme}

\begin{lemme}\phantomsection\label{l2} Soit  $f : T\rrr S$ un $S$-schéma, et
$$\xymatrix{R \ar@<0.5ex>[r]^{d_{1}} \ar@<-0.5ex>[r]_{d_{0}}& T}
$$
une relation d'équivalence dans $T$ dans la catégorie des $S$-schémas. Par le changement de base $T \rrr S$, on obtient la relation d'équivalence dans $T\times T$ (dans la catégorie des schémas sur $T$, via $p_{0}$)
$$
\xymatrix{R\times T \ar@<0.5ex>[r]^{d_{1}\times 1} \ar@<-0.5ex>[r]_{d_{0}\times 1} &T\times T}
$$
Considérons $R$ (resp. $R'$) comme schéma sur $T$ via $d_{0}$ (resp. via $d_{0}d_{0}' = d_{0}d_{1}'$), et notons $d'' : R' \rrr R\times T$ le morphisme de composantes $d'_{2}$  et $d_{0}d'_{0}$. Alors, dans le diagramme commutatif 
$$
\xymatrix{R'  \ar[d]_{d''} \ar@<0.5ex>[r]^{d'_{1}}  \ar@<-0.5ex>[r]_{d'_{0}} & R \ar[d]^{d}\\
 R\times T \ar@<0.5ex>[r]^{d_{1}\times 1} \ar@<-0.5ex>[r]_{d_{0}\times 1} &T\times T} 
 $$
 les deux carrés sont cartésiens.\qed
\end{lemme}
 
\subsection{Enveloppe affine}\

  Soit $f : T \rrr S$ un morphisme quasi-compact et quasi-séparé de schémas, de sorte que $f_{\star}(\oo_{T})$ est une $\oo_{S}$ algèbre quasi-cohérente. L'\emph{enveloppe affine} du $S$-schéma $T$ est le schéma affine sur $S$ 
$$
T^{{\rm aff}} \, = \, \s_{S}(f_{\star}(\oo_{T})),
$$
muni de son morphisme canonique 
$$
i_{T} : T \; \rrr \; T^{{\rm aff}}
$$
Voir (\cite{EGAI} 9.1.21, où $T^{{\rm aff}}$ est noté $T^{\rm 0}$). L'application $Z \mapsto  i_{T}^{-1}(Z) = Z\times_{T^{{\rm aff}}}T$ établit une bijection entre les ensembles des sous-schémas ouverts et fermés de $T^{{\rm aff}}$ et de $T$ (\ref{stein}), 
 $$
 i_{T}^{\star}: \of(T)\;  \rt \;  \of(T^{{\rm aff}}) .
$$
\medskip

Cette construction s'étend aux relations d'équivalence:

\n Soit  \; $d_{0}, d_{1} : \xymatrix{R \ar@<0.5ex>[r] \ar@<-0.5ex>[r]& T}$  une relation d'équivalence dans le $S$-schéma $T$ ; supposons que les morphismes canoniques   $f : T\rrr S$ et $g : R \rrr S$ soient quasi-compacts et quasi-séparés ; les $\oo_{S}$-algèbres $f_{\star}(\oo_{T})$ et $g_{\ast}(\oo_{R})$ sont donc quasi-cohérentes, ainsi que, par suite,  l'algèbre
$$
\mathcal{A} = {\rm Ker}( \xymatrix{f_{\star}(\oo_{T}) \ar@<0.5ex>[r] \ar@<-0.5ex>[r]& g_{\ast}(\oo_{R})}) .
$$
L'enveloppe affine de la relation d'équivalence (ou de son faisceau quotient) est par définition le $S$-schéma affine
$$
A = {\s}_{S}(\mathcal{A}) .
$$

\begin{lemme}\phantomsection\label{l3} Le morphisme composé $\alpha : T  \rightarrow T^{{\rm aff}} \rightarrow A$ est schématiquement dominant, i.e. l'application $\oo_{A} \rrr \alpha_{\star}(\oo_{T})$ est injective.
\end{lemme}

\begin{proof}  En effet, l'image directe de cette application par le morphisme $\beta : A \rrr S$ est l'application 
$$
\beta_{\star}(\oo_{A}) = \mathcal{A} \rrr \beta_{\star}\alpha_{\star}(\oo_{T}) = f_{\star}(\oo_{T}),
$$
 laquelle est injective par définition, et le morphisme $\beta$ est affine.
\end{proof}

\begin{lemme}\phantomsection\label{l4} La suite de morphismes $\xymatrix{R  \ar@<0.5ex>[r]^{d_{1}} \ar@<-0.5ex>[r]_{d_{0}} & T  \ar[r]^{\alpha} &A}$ induit, par image réciproque, une suite \emph{exacte} d'ensembles
$$
\of(A)\;  \to \; \of(T)\;  \rightrightarrows \; \of(R) .
$$
Autrement dit, si $W$ est un sous-schéma ouvert et fermé de $T$ tel que $d_{0}^{-1}(W) =d_{1}^{-1}(W)$, alors, il existe un unique sous-schéma ouvert et fermé $V$ dans $A$ tel que $\alpha^{-1}(V) = W$.
\end{lemme}
\begin{proof} Reprenons la suite exacte de $\oo_{S}$-algèbres quasi-cohérentes
$$
 \xymatrix{\mathcal{A} \ar[r] &f_{\star}(\oo_{T}) \ar@<0.5ex>[r] \ar@<-0.5ex>[r]& g_{\ast}(\oo_{R})} .
$$
Passant aux spectres, on obtient la suite de $S$-schémas affines
$$
\xymatrix{A & T^{{\rm aff}}  \ar[l] & R^{{\rm aff}}\ar@<0.5ex>[l] \ar@<-0.5ex>[l] }
$$
Cette suite n'est pas toujours exacte mais elle induit sur les ensembles d'ouverts fermés une suite exacte puisque les ouverts fermés de $\s(\mathcal{A})$ correspondent aux idempotents de $\Gamma(S, \mathcal{A})$. Pour conclure la démonstration, il suffit d'utiliser l'isomorphisme $ \of(T)\;  \rt \; \of(T^{{\rm aff}})$.
\end{proof}

\subsection{Démonstration du théorème \ref{t1}}\

On suppose dans la suite que $f : T \rrr S$ est un morphisme plat de présentation finie; il est donc, en particulier, quasi-compact et quasi-séparé (\cite[6.3.7]{EGAI}). On considère le graphe d'une relation d'équivalence dans $T$
$$
d : R \rrr T\times_{S} T ,
$$
o\`u $d$ est une immersion ouverte et fermée.

Comme plus haut, le schéma de base $S$ sera sous-entendu dans l'écriture des produits fibrés de $S$-schémas.
 \medskip

Notons $F$ le faisceau fppf quotient de $d_{0}, d_{1}$ et considérons le diagramme commutatif de faisceaux 
$$
\xymatrix{R \ar[d]_{i_{R}} \ar@<0.5ex>[r]^{d_{1}} \ar@<-0.5ex>[r]_{d_{0}} & T \ar[d]_{i_{T}} \ar[r]^p \ar[dr]^{\alpha}& F \ar@{-->}[d]^j\\
R\af  \ar@<0.5ex>[r] \ar@<-0.5ex>[r] & T\af \ar[r] & A}
$$ 
\bb

La ligne du haut est exacte par définition du quotient $F$, alors que celle du bas ne l'est en général pas ; mais on dispose cependant d'un  morphisme de faisceaux $j : F \rrr A$. On va montrer que sous les hypothèses du théorème, ce morphisme $j$  est représentable par une immersion ouverte ; cela entraînera que $F$ est (représentable par) un schéma, et que ce schéma est quasi-affine sur $S$.
\bb

\n Par le changement de base $T \rrr S$, on obtient la relation d'équivalence de $T$-schémas sur $T\times T$
$$
\xymatrix{R\times T \ar@<0.5ex>[r]^{d_{1}\times 1} \ar@<-0.5ex>[r]_{d_{0}\times 1} &T\times T}
$$
Son enveloppe affine (sur $T$) est $\alpha \times 1 : T\times T \rrr A\times T$, puisque $T$ est plat sur $S$ (\ref{pr1.1}).
\

Le diagramme suivant illustre les données relatives à la relation $R$, et à $\alpha$ ; le carré de droite est cartésien, ainsi que les carrés de gauche de même indice. 

Le morphisme composé $R \stackrel{d}{\to} T\times T \stackrel{\alpha \times 1}{\to} A \times T \to T$ est égal à $d_{0}$.
$$
\xymatrix{R'  \ar[d]_{d''} \ar@<0.5ex>[r]^{d'_{1}}  \ar@<-0.5ex>[r]_{d'_{0}} & R \ar[d]^{d} & \\
 R\times T \ar[d] \ar@<0.5ex>[r]^{d_{1}\times 1} \ar@<-0.5ex>[r]_{d_{0}\times 1} &T\times T \ar[d] \ar[r]_{\alpha\times 1}& A\times T \ar[d]\\
 R  \ar@<0.5ex>[r]^{d_{1}} \ar@<-0.5ex>[r]_{d_{0}} & T  \ar[r]_{\alpha} &A}  \leqno{(\star)}
 $$

 Considérons l'immersion ouverte et fermée $d : R \to T\times T$; ses images réciproques  par $d_{0}\times1$ et par $d_{1}\times 1$ sont égales à $d''$ puisque les deux carrés à gauche sont cartésiens. D'après le lemme \ref{l4}, il existe donc une immersion  ouverte et fermée $\delta : V \to A\times T$ dont l'image réciproque par $\alpha\times 1$ est égale à $d$ ; elle donne lieu au carré cartésien
 $$
 \xymatrix{R \ar[d]_{d} \ar[r]^{v} & V\ar[d]^{\delta}\\
 T\times T \ar[r]_{\alpha \times 1} & A\times T}
 $$
 où le morphisme $v$ est \scd, tout comme $\alpha \times 1$.\\
 
 Considérons le diagramme obtenu en composant les morphisme verticaux composables de ($\star$):
 $$
\xymatrix{R'  \ar[d]_{d'_{2}} \ar@<0.5ex>[r]^{d'_{1}}  \ar@<-0.5ex>[r]_{d'_{0}} & R \ar[d]^{d_{1}} \ar[r]^{v}&V \ar[d]^{\varphi} \\
 R  \ar@<0.5ex>[r]^{d_{1}} \ar@<-0.5ex>[r]_{d_{0}} & T  \ar[r]_{\alpha} &A} 
 $$
 où $\varphi$ est le morphisme composé $V \stackrel{\delta}{\to} A\times T \stackrel{{\rm pr}_{1}}{\to} A$; les trois carrés sont cartésiens. Puisque $\phi$ est plat et de présentation finie son image est un ouvert; ce morphisme se factorise donc en $V \by{\psi} U \by{\iota} A$, où $\psi$ est fidèlement plat de présentation finie et où $\iota$ est une immersion ouverte.
  Comme $d_{1}$ est surjectif, le morphisme $\alpha$ se factorise lui aussi par $\iota$, soit $\alpha = \iota u$; on obtient finalement le diagramme 
 $$
\xymatrix{R'  \ar[d]_{d'_{2}} \ar@<0.5ex>[r]^{d'_{1}}  \ar@<-0.5ex>[r]_{d'_{0}} & R \ar[d]^{d_{1}} \ar[r]^{v}&V \ar[d]^{\psi} \\
 R  \ar@<0.5ex>[r]^{d_{1}} \ar@<-0.5ex>[r]_{d_{0}} & T  \ar[r]_{u} &U}
 $$ 
 Il s'agit de montrer que $u : T \to U$ représente le quotient $T/R$. Or, $\psi$ est un morphisme fppf, et les trois carrés du diagramme sont cartésiens (pour le carré de droite, $\{\psi v ,  u d_{1}\}$, il faut se souvenir que $\{\iota \psi v, \iota u d_{1}\} = \{\phi v, \alpha d_{1}\}$ est cartésien, et que $\iota$ est un monomorphisme !). Il suffit donc  de montrer que $v : R \to V$ représente le quotient pour la ligne du haut. Or,  le lemme \ref{l1} montre que ce quotient est donné par $d_{0} : R \to T$ ; le morphisme $v$ se factorise donc  en $R \stackrel{d_{0}}{\to} T \stackrel{w}{\to}V$, et il reste à voir que $w$ est un isomorphisme.
 \\
 On a signalé plus haut que  $v$ est \scd \,; cela implique que $w$ l'est aussi. Par ailleurs, Le morphisme $w: T\to V$ est une section du morphisme composé $V \to A\times T \to T$, lequel  est séparé puisque $\delta : V \to A\times T$ est une immersion et que $A\times T \to T$ est affine ; $w$ est donc une immersion fermée. Mais une immersion fermée \scd e est un isomorphisme. $\Box$


\subsection{Troisième démonstration du théorème \ref{t1}, par {\sc L. Moret-Bailly}}\label{3.5}\

Pour un schéma $X$ sur un corps $k$, on définit le \emph{nombre géométrique de composantes connexes} de $X$, $n'(X)$ comme le nombre de composantes connexes de $X\otimes_{k}\Omega$ pour une (quelconque) extension algébriquement close $\Omega$ de $k$ \cite[4.5]{EGAIV2}.

\begin{lemme} \phantomsection\label{l5} Soit $ f : T\to S$ un morphisme surjectif de présentation finie, avec $S$ quasi-compact. Le nombre géométrique de composantes connexes des fibres  $T_{s} = f^{-1}(s)$, pour $s$ parcourant $S$, est une famille bornée, i.e.  $m(f) = {\rm max}(n'(T_{s}), s\in S)$ est fini.
\end{lemme}

\begin{proof} On peut supposer que  $S$ est affine, puis que $f$ provient, par un changement de base $S \to S_{0}$, d'un morphisme $f_{0} : T_{0} \to S_{0}$ o\`u $S_{0}$ est noethérien et $f_{0}$ de type fini ; on aura alors $m(f) \leq m(f_{0})$ ; on peut donc supposer que $S$ est affine et noethérien. En vertu de \cite[9.7.8]{EGAIV3}, pour tout $s \in S$, il existe un ouvert $U$ contenant $s$ tel que pour tout $s' \in U \cap \bar{s}$, on ait $n'(T_{s'}) = n'(T_{s})$.
Un raisonnement classique par récurrence noethérienne permet alors de construire une partition finie  $(S_{\alpha})$ de $S$ par des sous-schémas localement fermés de $S$, telle que les applications $s \mapsto n'(T_{s})$ soient constantes sur chaque $S_{\alpha}$ ; pour plus de détails, voir \cite[O$_{\rm I}$, 2.5.2]{EGAI}. 
\end{proof}

On reprend dans la suite les hypothèses du théorème \ref{t1}; en particulier, $f : T\to S$ est un morphisme plat de présentation finie. On raisonne par récurrence sur $m(f)$.

\begin{lemme}\phantomsection\label{l6} Si $m(f) = 1$, alors le quotient $T/R$ est isomorphe à $S$.
\end{lemme}

\begin{proof} Les fibres de $T\times_{S}T \to T$ sont géométriquement connexes, et rencontrent $R$ puisque la relation d'équivalence contient la diagonale ; mais $R$ est un ouvert fermé de $T\times_{S}T$, donc $R = T\times_{S}T$ ; comme $f$ est fidèlement plat quasi compact, la suite $\xymatrix{T\times_{S}T \ar@<0.5ex>[r]  \ar@<-0.5ex>[r] & T  \ar[r]&S}$ est exacte, i.e. on a $T/R = S$.
\end{proof}

\begin{para} Supposons que $f$ admette une section $g : S \to T$. Le saturé de $g$ est un ouvert fermé $U$ de $T$, puisqu'il est défini par le carré cartésien
$$
\xymatrix{U \ar[rr] \ar[d] && R \ar[d]^d\\
T \ar@{=}[r] & S\times_{S}T \ar[r]_{g\times 1} & T\times_{S}T}
$$
Notons $V$ l'ouvert fermé complémentaire, de sorte que $T = U \sqcup V$. Notons $R_{U}$  et $R_{V}$ les relations induites par $R$ sur $U$ et $V$ respectivement. On vérifie formellement que $R_{U} = U\times_{S}U$ et donc que le quotient  $U/R_{U}$ est isomorphe à $S$. D'autre part, chacune des fibres $T_{s}$ rencontre $U$ puisque $U$ contient la section $g$ ; on en déduit l'inégalité stricte $m(V\to S) < m(T\to S)$. L'hypothèse de récurrence entra\^ine que le quotient $V/R_{V}$ est un schéma étale et séparé sur $S$, et on peut conclure que $T/R = U/R_{U} \sqcup V/R_{V}$ est lui aussi un schéma étale et séparé sur $S$.
\end{para}

\begin{para} Cas général. On utilise \emph{l'universalité} des faisceaux $T/R$ sur ${\sf Sch}_{S}$, ce qui signifie ceci : pour un morphisme $S' \to S$, le faisceau sur ${\sf Sch}_{S'}$ déduit de $T/R$ par image réciproque est isomorphe au faisceau $S'\times_{S}T /S'\times_{S}R$ ; il est, en particulier, canoniquement muni d'une donnée de descente relative $ S'\to S$.

Appliquons au problème initial sur $S$, le changement de base $T \to S$ ; au-dessus de $T$ le morphisme $f$ acquiert une section ; d'après le point précédent, le faisceau devient donc représentable par un schéma étale et séparé sur $T$, donc quasi-affine sur $T$, et ce schéma est muni d'une donnée de descente relative à $T\to S$ . Mais d'après (\cite[IX, 4.1, p.182]{SGA1}), une donnée de descente sur un tel schéma est effective, autrement dit, le schéma en question provient de $S$.\qed
\end{para}

 
\bigskip

\noindent IMJ-PRG, Case 247\\
4 place Jussieu, 
75252 Paris Cedex 05,
France\\
\texttt{daniel.ferrand@imj-prg.fr}


\begin{thebibliography}{EGA IV2}
 



\bibitem[BLR90]{BLR90} {\sc S. Bosch, W. L\"utkebhomert, M. Raynaud}, {\it N\'eron Models}, Ergebnisse der Math. (3), vol. 21, Springer-Verlag, (1990).

 \bibitem[DG]{DG70} {\sc M. Demazure,  P. Gabriel}, {\it Groupes alg\'ebriques}, Masson (1970).


\bibitem[FAG]{FAG} {\sc B. Fantechi, L. G\"ottsche, L. Illusie, S. L. Kleiman, N. Nitsure, A. Vistoli}, {\it Fundamental Algebraic Geometry}, Math. Surveys, vol.123, AMS, (2005).

\bibitem[Fév69]{Fév69} {\sc P. Février}, Propriétés de l'anneau $K\otimes_{k}K$ pour une extension de corps $k\to K$, {\it Mémoire de DEA, Université de Paris VI} (non publié), (1969).


\bibitem[Knu71]{Knu71} {\sc D. Knutson}, {\it Algebraic Spaces}, LNM vol. 203, Springer-Verlag (1971).

\bibitem[LMB]{LMB00} {\sc G. Laumon, L. Moret-Bailly}, {\it Champs alg\'ebriques}, Springer-Verlag (2000).


\bibitem[Ray67]{Ray67} {\sc M. Raynaud}, Passage au quotient par une relation d'équivalence plate, {\it pp.78-85 \emph{in} : Proc.Conf.Local Fields (Driebergen) (éd T.A. Springer)}, (1967).


\bibitem[Ray70]{Ray70} {\sc M. Raynaud}, {\it Anneaux hens\'eliens}, LNM 169, Springer, (1970).

\bibitem[Rom11]{Rom11} {\sc M. Romagny}, Composantes connexes et irr\'eductibles en familles,  {\it Manuscripta math.} {\bf 136}, 1-32, (2011).

\bibitem[Sza]{Sza09}{\sc T. Szamuely}, {\it Galois Groups and Fundamental Groups}, Cambridge Uni. Press(2009)

\begin{center}
{\bf\sc Sigles}
\end{center}

\bibitem[A]{A}{\sc N. Bourbaki}, {\it Alg\`ebre, ch. 4 à 7}. Masson, (1981). Translated into English as {\it  Algebra II}. Springer, (2003).

\bibitem[AC]{AC} {\sc N. Bourbaki}, {\it Alg\`ebre commutative}. Translated into English as {\it Commutative Algebra}, chap. 1--7. Springer, (1989).

\bibitem[TG]{TG} {\sc N. Bourbaki}, {\it Topologie g\'en\'erale}, chap. 1 \`a 4, Masson, (1990).

\bibitem[EGA I]{EGAI} {\sc A. Grothendieck, J. Dieudonn\'e}, {\it \'El\'ements de g\'eom\'etrie alg\'ebrique I}, Grundleheren Math. Wiss., vol. 166, Springer-Verlag (1971).

\bibitem[EGA II]{EGAII} {\sc A. Grothendieck, J. Dieudonn\'e} {\it \'El\'ements de G\'eom\'etrie alg\'ebrique, II: \'Etude globale élémentaire de quelques classes de morphismes}, Publ. Math. IH\'ES no 8 (1961).

\bibitem[EGA III]{EGAIII} {\sc A. Grothendieck, J. Dieudonn\'e} {\it \'El\'ements de G\'eom\'etrie alg\'ebrique, III: \'Etude cohomologique des faisceaux cohérents}, Publ. Math. IH\'ES no 11 (1961) et 17 (1963).

\bibitem[EGA IV$_{1}$]{EGAIV1} {\sc A. Grothendieck, J. Dieudonn\'e} {\it \'El\'ements de G\'eom\'etrie alg\'ebrique} Publ. Math. IH\'ES no. 20, (1964).

\bibitem[EGA IV$_{2}$]{EGAIV2} {\sc A. Grothendieck, J. Dieudonn\'e} {\it \'El\'ements de G\'eom\'etrie alg\'ebrique} Publ. Math. IH\'ES no. 24, (1965).

\bibitem[EGA IV$_{3}$]{EGAIV3} {\sc A. Grothendieck, J. Dieudonn\'e} {\it \'El\'ements de G\'eom\'etrie alg\'ebrique} Publ. Math. IH\'ES no. 28, (1966).

\bibitem[EGA IV$_{4}$]{EGAIV4} {\sc A. Grothendieck, J. Dieudonn\'e} {\it \'El\'ements de G\'eom\'etrie alg\'ebrique} Publ. Math. IH\'ES no. 32, (1967).

\bibitem[SGA 1]{SGA1} {\sc A. Grothendieck}, {\it Revêtements étales et groupe fondamental}, Documents mathématiques n$^o$ 3, Soc. Math. France (2003).

\bibitem[SGA  3]{SGA3} {\sc M. Demazure, A. Grothendieck}, {\it Sch\'emas en groupes},  Documents math\'ematiques n$^o $ 7, Soc. Math. France, (2011). 

\bibitem[SGA  4]{SGA4} {\sc M. Artin, A. Grothendieck, J.-L. Verdier}, {\it Théorie des topos et cohomologie étale des schémas} (vol. II),  Lect. Notes in Math. vol. 270, Springer-Verlag (1972).


\end{thebibliography}
\end{document}